\documentclass[11pt]{amsart}
\usepackage{hyperref}
\usepackage{amsmath}
\usepackage{amssymb}
\usepackage{amsthm}
\usepackage{mathrsfs}
\usepackage{latexsym}
\usepackage{amsthm}
\usepackage{graphicx}
\usepackage{color}
\usepackage[all]{xy}
\usepackage{cite}
\usepackage{booktabs}

\usepackage[full]{textcomp}
\usepackage[sups]{Baskervaldx} % osf for text, not math
\usepackage{cabin} % sans serif
\usepackage[varqu,varl]{inconsolata} % sans serif typewriter
\usepackage[baskervaldx,bigdelims,vvarbb]{newtxmath} % bb from STIX
\usepackage[cal=cm]{mathalfa} % mathcal
\usepackage{tikz}
\usetikzlibrary{arrows,matrix,decorations.pathmorphing,cd}

\theoremstyle{plain}
\newtheorem{theorem}{Theorem}[section]
\newtheorem{corollary}[theorem]{Corollary}
\newtheorem{proposition}[theorem]{Proposition}
\newtheorem{lemma}[theorem]{Lemma}

\theoremstyle{remark}
\newtheorem{remark}[theorem]{Remark}
\theoremstyle{definition}
\newtheorem{definition}[theorem]{Definition}
\newtheorem{construction}[theorem]{Construction}

\newtheorem{proposition-definition}[theorem]{Proposition-Definition}

\newtheorem{notation}[theorem]{Notation}

% new operators

\DeclareMathOperator{\codim}{codim}
\DeclareMathOperator{\Cok}{Cok}
\DeclareMathOperator{\Def}{Def}
\DeclareMathOperator{\ev}{ev}    % FIX ME \ev or f?
\DeclareMathOperator{\GW}{GW}
\DeclareMathOperator{\vdim}{vdim}
\DeclareMathOperator{\ctop}{c_{\text{\rm top}}}

% new text commands
\newcommand{\limiting}{\text{\rm lim}}
\newcommand{\red}{\text{\rm red}}
\newcommand{\vv}{\text{\rm virt}}

% blackboard bold
\newcommand{\A}{\mathbb{A}}
\newcommand{\E}{\mathbb{E}}
\newcommand{\pp}{\mathbb{P}}
\newcommand{\Q}{\mathbb{Q}}
\newcommand{\Z}{\mathbb{Z}}

% mathfrak
\newcommand{\frA}{\mathfrak{A}}
\newcommand{\frC}{\mathfrak{C}}
\newcommand{\frD}{\mathfrak{D}}
\newcommand{\frE}{\mathfrak{E}}
\newcommand{\frF}{\mathfrak{F}}
\newcommand{\frG}{\mathfrak{G}}
\newcommand{\frL}{\mathfrak{L}}
\newcommand{\frM}{\mathfrak{M}}
\newcommand{\frN}{\mathfrak{N}}
\newcommand{\frP}{\mathfrak{P}}
\newcommand{\frX}{\mathfrak{X}}

% Picard stacks
\DeclareMathOperator{\frPic}{\mathfrak{Pic}}
\newcommand{\Picsms}{\frPic^{\text{st}}}
\newcommand{\tildefrPic}{\tilde{\frPic}}

% objects
\newcommand{\tildeMP}{\tilde{M}(\pp)}
\newcommand{\tildeMX}{\tilde{M}(X)}
\newcommand{\tildeMXP}{\tilde{M}^X(\pp)}
\newcommand{\tildeV}{\tilde{V}}
\newcommand{\tildeW}{\tilde{W}}
\newcommand{\tildeA}{\tilde{A}}
\newcommand{\tildefrM}{\tilde{\frM}}

\newcommand{\frBlF}{\mathfrak{Bl(F)}}
\newcommand{\frBlG}{\mathfrak{Bl(G)}}
\newcommand{\BltildeV}{Bl(\tildeV)}
\newcommand{\BltildeMP}{Bl(\tildeMP)}
\newcommand{\BltildeMX}{Bl(\tildeMX)}
\newcommand{\BltildeMXP}{Bl(\tildeMXP)}
\newcommand{\BlF}{Bl(F)}
\newcommand{\BlGzero}{Bl(G^0)}
\newcommand{\BlGlambda}{Bl(G^{\lambda})}

\DeclareMathOperator{\DD}{DD}
\DeclareMathOperator{\Corr}{Corr}

\title[A Splitting of the Virtual Class]{A splitting of the virtual class for \\ genus one stable maps}
\author{Tom Coates}
\email{t.coates@imperial.ac.uk}
\author{Cristina Manolache}
\email{c.manolache@imperial.ac.uk}
\address{Department of Mathematics \\ Imperial College London\\ 180 Queen's Gate \\ London SW7 2AZ \\ United Kingdom}

\begin{document}
\maketitle
\begin{abstract} Moduli spaces of stable maps to a smooth projective variety typically have several components. We express the virtual class of the moduli space of genus one stable maps to a smooth projective variety as a sum of virtual classes of the components. The key ingredient is a generalised functoriality result for virtual classes. We show that the natural maps from `ghost' components of the genus one moduli space to moduli spaces of genus zero stable maps satisfy the strong push forward property. As a consequence, we give a cycle-level formula which relates standard and reduced genus one  Gromov--Witten invariants of a smooth projective Calabi--Yau theefold.\end{abstract}
\tableofcontents

\section{Introduction}
 Let $X$ be a smooth projective variety in $\pp^r$. Let $\bar{M}_{g,n}(X,d)$ be the moduli space of stable maps to $X$ with genus $g$ and homology class $d \in H_2(X;\Z)$~\cite{k}. Then $\bar{M}_{g,n}(X,d)$ has virtual dimension 
\[
\vdim = n + (1-g)(\dim X - 3) + c_1(TX) \cdot d
\]
and a virtual class $[\bar{M}_{g,n}(X,d)]^{\vv}\in A_{\vdim}(\bar{M}_{g,n}(X,d))$: see~\cite{lt1, lt2, bf}. Gromov--Witten (GW) invariants of $X$ are intersection numbers against this virtual class.  They are related to counts of curves in $X$ of genus $g$ and class $d$. 

 For $g> 0$, GW invariants also encode some contributions from degenerate lower genus stable maps. These contributions are fairly well understood for genus one GW invariants of threefolds. In genus one, Zinger and Li--Zinger prove a formula which expresses GW invariants in terms of \emph{reduced} invariants (which are closely related to BPS numbers) and degenerate contributions. From now on, we restrict ourselves to the genus-one case. 

 The degenerate contributions reflect the structure of the moduli space of stable maps, which has many components, some of which contribute to GW invariants. For example, the moduli space of stable maps $M(\pp)=\bar{M}_{1,n}(\pp^r,d)$ has a main component $M(\pp)^0$, whose generic point is a map from a smooth genus one curve, and 
 several other components 
\[
M(\pp)^{\lambda}\simeq M_{1,k+n_0}(\pp^r,0)\times_{\pp^r}\bar{M}_{0,1+n_1}(\pp^r,d_1)\times_{\pp^r}\ldots\times_{\pp^r}\bar{M}_{0,1+n_k}(\pp^r,d_k) \, \big/ \, \Gamma^\lambda.
\]
Here $\lambda$ denotes the combinatorial data $(k; n_0,\ldots,n_k; d_1,\ldots, d_k)$ and $\Gamma^\lambda$ is the (finite) automorphism group of this data. With this notation we have $M(\pp)=M(\pp)^0\cup\bigcup_{\lambda\in I} M(\pp)^{\lambda}$ for an appropriate index set~$I$.
 
 The first step in the analysis is the definition of reduced invariants~\cite{z,VZ}. The idea is to construct a blow-up $\tildeMP$ of the moduli space of stable maps, which induces a blow up of $M(X)$. Consider 
\[
\tildeMX=\tildeMP\times_{M(\pp)}M(X)=\tildeMX^0\cup \bigcup_{\lambda \in I} \tildeMX^{\lambda}.
\]
On $\tildeMX^0$ it is possible to define a virtual class \cite{z,VZ}.  Reduced GW invariants are intersection numbers against this virtual class.

Following \cite{lichang,huli}, we will refer to $\tildeMP^\lambda$ and $\tildeMX^\lambda$, $\lambda \in I$, as `ghost components'.  In this paper, we define virtual classes on the ghost components $\tildeMX^{\lambda}$ and prove:

 \begin{theorem} \label{main}  We have the following equality in $A_*(\tildeMX)$:
 \begin{equation} \label{eq:main}
   [\tildeMX]^{\vv}= [\tildeMX^0]^{\vv}+\sum_{\lambda \in I}[\tildeMX^{\lambda}]^{\vv}.
 \end{equation}
 \end{theorem}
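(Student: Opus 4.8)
\emph{Strategy.} The plan is to realise $[\tildeMX]^{\vv}$ as the image under a virtual pullback of a single intrinsic normal cone, to decompose that cone along the irreducible components of $\tildeMX$, and to use a functoriality result to recognise the resulting pieces as the virtual classes of the individual components.

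The constructions preceding the theorem equip $\tildeMX$, together with each of its components, with a morphism to a fixed smooth Artin stack $\frA$ and with compatible relative perfect obstruction theories. Writing $\frE$ for the associated vector bundle stack and $\frC_{\tildeMX/\frA}\hookrightarrow\frE$ for the intrinsic normal cone, we have $[\tildeMX]^{\vv}=0^!_{\frE}[\frC_{\tildeMX/\frA}]$, and likewise $[\tildeMX^0]^{\vv}$ and $[\tildeMX^\lambda]^{\vv}$ are obtained as $0^!_{\frE}$ of the cones $\frC_{\tildeMX^0/\frA}$ and $\frC_{\tildeMX^\lambda/\frA}$. Since $\tildeMX=\tildeMX^0\cup\bigcup_{\lambda\in I}\tildeMX^\lambda$ is the decomposition into irreducible components and $\tildeMX$ is generically reduced along each of them --- a feature of the desingularised space, in contrast to $M(X)$ itself, that can be checked from the local equations of~\cite{huli,lichang} --- purity of the intrinsic normal cone~\cite{bf} yields
\[
[\frC_{\tildeMX/\frA}]=[\frC_{\tildeMX^0/\frA}]+\sum_{\lambda\in I}[\frC_{\tildeMX^\lambda/\frA}]
\]
in $A_*(\frE)$. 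Applying $0^!_{\frE}$, which is additive, reduces~\eqref{eq:main} to the identities $0^!_{\frE}[\frC_{\tildeMX^0/\frA}]=[\tildeMX^0]^{\vv}$ and $0^!_{\frE}[\frC_{\tildeMX^\lambda/\frA}]=[\tildeMX^\lambda]^{\vv}$, $\lambda\in I$.

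These identities are the real content. The one for the main component follows by comparing the restriction to $\tildeMX^0$ of the obstruction theory over $\frA$ with the reduced obstruction theory of~\cite{z,VZ}: by design of the desingularisation, over $\tildeMX^0$ the cone does not meet the distinguished part of $\frE$ carrying the additional genus one obstructions, and this identifies $0^!_{\frE}[\frC_{\tildeMX^0/\frA}]$ with the reduced virtual class. For a ghost component, recall that $\tildeMX^\lambda$ is a blow-up of the product $M(X)^\lambda$ of genus zero moduli spaces (and a contracted genus one factor) analogous to the space $M(\pp)^\lambda$ of the introduction, and that $[\tildeMX^\lambda]^{\vv}$ is built from the virtual classes of those genus zero factors and the Hodge-type obstruction bundle of the contracted genus one factor. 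Matching this with $0^!_{\frE}[\frC_{\tildeMX^\lambda/\frA}]$ is where the generalised functoriality result and the strong push-forward property enter: functoriality reduces the cone computation on $\tildeMX^\lambda$ to the genus zero moduli spaces, provided the natural morphism from $\tildeMX^\lambda$ to the product of those spaces satisfies the strong push-forward property.

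Verifying that strong push-forward property is the step I expect to be the main obstacle. It requires a description of the Vakil--Zinger blow-up along the ghost loci --- through the local equations of~\cite{huli,lichang} --- explicit enough to control both the intrinsic normal cone there and the effect of the blow-up morphism on virtual classes, and in particular to exclude extra multiplicities (the same analysis underpins the generic reducedness used above). Finally, since $\tildeMX=\tildeMP\times_{M(\pp)}M(X)$, it is enough to carry out this geometric analysis for $X=\pp^r$ and to transport the splitting to an arbitrary smooth projective $X$ by applying the generalised functoriality result to the Cartesian square relating $\tildeMX\to\tildeMP$ to $M(X)\to M(\pp)$, using the relative obstruction theory $R\pi_*f^*N_{X/\pp^r}$; this last step is formal.
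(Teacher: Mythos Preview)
Your proposal has a fundamental gap at its starting point. You posit a single vector bundle stack $\frE$ over $\tildeMX$ such that
\[
[\tildeMX]^{\vv}=0^!_{\frE}[\frC_{\tildeMX/\frA}],\qquad
[\tildeMX^0]^{\vv}=0^!_{\frE}[\frC_{\tildeMX^0/\frA}],\qquad
[\tildeMX^\lambda]^{\vv}=0^!_{\frE}[\frC_{\tildeMX^\lambda/\frA}].
\]
No such $\frE$ exists, and this is exactly the difficulty the paper is set up to overcome. The natural candidate --- the obstruction theory for $\tildeMX\to\tildeMP$ --- is not perfect: it is $\pi_*\ev^*N$, which is a union of vector bundles $N^\lambda$ of \emph{different ranks} on the different components (rank $\beta\cdot\deg X$ on $\tildeMX^0$, rank $\beta\cdot\deg X+\codim X$ on each ghost component; see Proposition~\ref{Nlambda}). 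Correspondingly, the virtual classes of the components are \emph{not} defined by restricting a global obstruction theory: $[\tildeMX^0]^{\vv}$ is defined as $j_{N^0}^![\tildeMP^0]$, while $[\tildeMX^\lambda]^{\vv}$ is the pushforward of a class on a further blow-up $\BltildeMX^\lambda$ built from a \emph{reduced} obstruction theory $E^{X,\lambda}$ (Lemma~\ref{reduced obstruction} and the definitions at the start of \S\ref{sec:main theorem}). So the identities $0^!_{\frE}[\frC_{\tildeMX^\lambda/\frA}]=[\tildeMX^\lambda]^{\vv}$ you aim to prove are not even well-posed as stated.

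The paper's route is quite different from a purity argument. One passes to a further blow-up $\BltildeMP$ so that $C_{\BltildeMP/\BltildeV}$ becomes a line bundle, constructs reduced obstruction theories on the ghost components, then works on a blow-up $\Def'_{\BltildeMP}\tildefrPic$ of the deformation-to-the-normal-cone and the double deformation space $\DD'$. The cone $C_{\BltildeMX/\tildefrPic}$ is deformed inside families of vector bundle stacks built in \S\ref{deformations}; the limit picks up, for each component, the expected term \emph{plus} a correction class $[\Corr]$ supported on intersections of components. The heart of the proof (Lemmas~\ref{two bundles} and~\ref{D contribution}) is that these correction classes cancel when summed over $\lambda$. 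None of this is captured by ``purity plus functoriality''. Two further points: the strong push-forward property (Theorem~\ref{main2}) is not an input to Theorem~\ref{main} --- it is proved afterwards and used only for the application to Theorem~\ref{thm:LZ_formula}; and generic reducedness of $\tildeMX$ along its components is neither established nor used --- what is used is the Hu--Li local equations for $\tildeMP$, which control $C_{\BltildeMP/\tildefrPic}$, not $C_{\tildeMX/\tildefrPic}$.
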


 \noindent There is a natural projection from the boundary component $\tildeMX^{\lambda}$ to the space 
\[
P(X)^{\lambda} = \bar{M}_{0,1+n_1}(\pp^r,d_1)\times_{\pp^r}\ldots\times_{\pp^r}\bar{M}_{0,1+n_k}(\pp^r,d_k) \, \big/ \, \Gamma_\lambda,
\]
which forgets the collapsed genus-$1$ component.  $P(X)^{\lambda}$ carries a natural virtual class $[P(X)^{\lambda}]^{\vv}$, and we prove:

 \begin{theorem} \label{main2}Let $X$ be a smooth projective threefold. Then, the morphisms $$q^{\lambda}:\tildeMX^{\lambda}(X)\to P(X)^{\lambda}$$ satisfy the strong virtual push-forward property.
 \end{theorem}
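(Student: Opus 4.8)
The plan is to exhibit each ghost component $\tildeMX^\lambda$ as carrying a perfect obstruction theory \emph{relative to} $P(X)^\lambda$, and to show that the induced virtual pull-back in the sense of Manolache agrees with the virtual class $[\tildeMX^\lambda]^{\vv}$ constructed earlier, so that $q^\lambda_* [\tildeMX^\lambda]^{\vv} = [P(X)^\lambda]^{\vv}$ by compatibility of virtual pull-backs with proper push-forward. Concretely, I would first analyse the fiber of $q^\lambda$: a point of $P(X)^\lambda$ is a tuple of genus zero stable maps $(f_i : C_i \to X)$ with marked points mapping to a common point in $X^k$ via the gluing/evaluation maps, and the fiber over it records the data of a collapsed genus one curve $E$ together with $k$ rational bridges attaching $E$ to the $C_i$ at the distinguished points $f_i(x_i) \in X$. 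After passing to the blow-up $\tildeMP$, this collapsed-genus-one locus is rigidified exactly as in Vakil--Zinger, and the fiber becomes (an affine bundle over, or a resolution of) a space built from $\bar M_{1,k+n_0}(X,0)$ — essentially $\bar M_{1,k+n_0}$ times the genus-one obstruction data. The key point, which parallels Theorem 1.1's input "generalised functoriality of virtual classes," is that $q^\lambda$ factors through the corresponding map to $\pp^r$-level moduli, where one already understands the behaviour of virtual classes under the fiber-product structure $\bar M_{1}(X,0) \times_{X} \prod \bar M_0(X,d_i)$.

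The main steps, in order: (1) construct a perfect obstruction theory $\phi^\lambda : E^\lambda \to L_{\tildeMX^\lambda / P(X)^\lambda}$ of the expected virtual rank — this comes from deformation theory of stable maps with the genus zero data on the $C_i$ held fixed, i.e. from $R^\bullet\pi_* f^* TX$ restricted to the collapsed genus one component and the rational bridges, together with the deformations of the bridge curve; on a threefold the virtual rank of the fiber direction is zero, which is what makes the strong push-forward possible. (2) Identify $[\tildeMX^\lambda]^{\vv}$ with $(\phi^\lambda)^![P(X)^\lambda]^{\vv}$: both are defined via the same absolute obstruction theory on $\tildeMX^\lambda$, and one checks the compatibility triangle $E^\lambda \to E_{\tildeMX^\lambda} \to (q^\lambda)^* E_{P(X)^\lambda}$ using that $P(X)^\lambda$'s obstruction theory is pulled back from the genus zero factors. (3) Apply the compatibility of virtual pull-back with push-forward along the proper morphism $q^\lambda$ (which is projective, being a composition of blow-downs, forgetful maps, and passage to a quotient by the finite group $\Gamma_\lambda$) to conclude $q^\lambda_*[\tildeMX^\lambda]^{\vv} = [P(X)^\lambda]^{\vv}$; the $\Gamma_\lambda$-quotient contributes only a rational multiplicity which is absorbed into the definition of $[P(X)^\lambda]^{\vv}$. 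Then by definition $q^\lambda$ satisfies the strong virtual push-forward property.

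The hard part will be step (2): constructing the \emph{relative} obstruction theory cleanly on the blown-up ghost component and verifying the compatibility triangle. The blow-up $\tildeMP \to M(\pp)$ modifies the moduli problem precisely along the collapsed genus one locus, so the relative cotangent complex $L_{\tildeMX^\lambda / P(X)^\lambda}$ is not simply the naive one coming from stable-maps deformation theory — one has to track how Vakil--Zinger's sequence of blow-ups interacts with the forgetful map $q^\lambda$, and show the blow-up centers are pulled back (or behave functorially) from the genus zero side so that the relative obstruction theory descends. I expect this to require a careful induction over the tower of blow-ups, mirroring the bookkeeping already set up for Theorem 1.2, and the threefold hypothesis will enter to force the relevant higher cohomology $H^1$ on the genus one component to have the right rank (via $c_1(TX)\cdot 0 = 0$ and $\dim X = 3$) so that the fiber-direction virtual dimension vanishes and "strong" push-forward — not merely ordinary functoriality — holds.
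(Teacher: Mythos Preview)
Your proposal has a genuine gap at step~(2), and it is not merely a matter of bookkeeping. The paper itself addresses precisely your strategy in Remark~\ref{useless}: one does have a map of relative obstruction theories along $q^{\lambda}$, but to run the virtual push-forward machinery directly one would need an induced morphism of intrinsic normal cones (equivalently, a genuine compatible triple over $P(X)^{\lambda}$), and the authors state that they \emph{do not know whether such a morphism exists}, because they lack sufficiently explicit equations for $\tildeMX$ inside $\tildeMP$. Your suggestion to resolve this by ``careful induction over the tower of blow-ups'' does not supply the missing ingredient; the difficulty is exactly that the blow-up obscures how the cone on the $X$-side compares to the cone on the genus-zero base.

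The paper's actual argument (Lemma~\ref{xzcompat}, Propositions~\ref{one} and~\ref{lambda}) bypasses this obstacle entirely. Rather than building a compatible triple for $\tildeMX^{\lambda}\to P(X)^{\lambda}$, it passes to the further blow-up $\BltildeMX^{\lambda}$ and factors through the intermediate space $\BltildeMXP^{\lambda}$, where only the collapsed elliptic component is required to land in $X$ while the rational tails still map to $\pp^r$. The point is that $\BltildeMXP^{\lambda}$ pushes forward along $\tilde{r}^{\lambda}$ to a \emph{smooth} target inside genus-zero moduli of maps to $\pp^r$; hence $\tilde{r}^{\lambda}_*[\BltildeMXP^{\lambda}]^{\vv}$ is automatically a divisor multiple of that fundamental class (for $\lambda=1$) or zero (for $\lambda\ne 1$, by a pure dimension count). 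One then invokes the identity $[\BltildeMX^{\lambda}]^{\vv}=i^!_{K^{\lambda}}[\BltildeMXP^{\lambda}]^{\vv}$ already established in \S\ref{sec:main theorem} and commutes the virtual pull-back $i^!_{K^{\lambda}}$ with the proper push-forward. No compatible triple relative to $P(X)^{\lambda}$ is ever needed; the role of that triple is played instead by the much easier compatible triple over the smooth $\pp^r$-level base in Lemma~\ref{xzcompat}.

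A smaller issue: the strong virtual push-forward property asserts only that $q^{\lambda}_*[\tildeMX^{\lambda}]^{\vv}$ is a \emph{rational multiple} of $[P(X)^{\lambda}]^{\vv}$, not that it equals it. Your claimed equality is false; the constant is determined separately (Lemma~\ref{nicelocus}, Theorem~\ref{degree one}) and, in the Calabi--Yau case with $n=0$, equals $\tfrac{2+K_X\cdot\beta}{24}$ for $\lambda=1$ and zero for the remaining ghost components.
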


 \noindent Theorems~\ref{main} and~\ref{main2} together give a cycle-level proof of the Zinger/ Li--Zinger formula~\cite{z,lz2,zingerstvsred}. 

 \begin{theorem}\label{thm:LZ_formula} Let $X$ be a Calabi--Yau threefold. Then, the reduced invariants and GW invariants of $X$ are related by the formula 
   \[
     \GW_{1,\beta}^X=\GW^{X,\red}_{1,\beta}+\frac{1}{12}\GW^{X}_{0,\beta}.
   \]
 \end{theorem}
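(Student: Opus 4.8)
The plan is to integrate the splitting of Theorem~\ref{main} and evaluate the resulting contributions.

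Since $X$ is a Calabi--Yau threefold and there are no marked points, $\vdim\bar M_{1,0}(X,\beta)=\vdim\bar M_{0,0}(X,\beta)=0$, so each of the three invariants in the statement is the degree of a zero-dimensional virtual class. The natural morphism $\tilde\rho\colon\tildeMX\to\bar M_{1,0}(X,\beta)$ is proper and birational (it is the base change of Zinger's blow-up~\cite{z,VZ}), and the obstruction theory of $\tildeMX$ is pulled back from that of $\bar M_{1,0}(X,\beta)$; hence $\tilde\rho_*[\tildeMX]^{\vv}=[\bar M_{1,0}(X,\beta)]^{\vv}$. Combining this with the identity~\eqref{eq:main} of Theorem~\ref{main} gives
\[
\GW^X_{1,\beta}=\deg[\tildeMX]^{\vv}=\deg[\tildeMX^0]^{\vv}+\sum_{\lambda\in I}\deg[\tildeMX^{\lambda}]^{\vv}.
\]
As $\deg[\tildeMX^0]^{\vv}=\GW^{X,\red}_{1,\beta}$ by the definition of the reduced invariant, it remains to prove $\sum_{\lambda}\deg[\tildeMX^{\lambda}]^{\vv}=\tfrac1{12}\GW^X_{0,\beta}$.

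I would first cut down the sum. With no marked points the combinatorial types are $\lambda=(k;0,\dots,0;d_1,\dots,d_k)$ with $d_i>0$ and $\sum_i d_i=\beta$, and
\[
\vdim P(X)^{\lambda}=\sum_{i=1}^{k}\vdim\bar M_{0,1}(X,d_i)-(k-1)\dim X=k-3(k-1)=3-2k .
\]
For $k\ge 2$ this is negative, hence $[P(X)^{\lambda}]^{\vv}=0$; by the strong virtual push-forward property (Theorem~\ref{main2}) the class $q^{\lambda}_*[\tildeMX^{\lambda}]^{\vv}$ is recovered from $[P(X)^{\lambda}]^{\vv}$ and so vanishes, giving $\deg[\tildeMX^{\lambda}]^{\vv}=0$. (This is where the Calabi--Yau threefold hypothesis is used in an essential way.) Thus only $\lambda_0=(1;0;\beta)$ can contribute: here $\Gamma_{\lambda_0}$ is trivial, $P(X)^{\lambda_0}=\bar M_{0,1}(X,\beta)$ has virtual dimension $1$, and $q:=q^{\lambda_0}\colon\tildeMX^{\lambda_0}\to\bar M_{0,1}(X,\beta)$ is the morphism forgetting the contracted genus-one component, with fibres $\bar M_{1,1}$.

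It then remains to compute the $\lambda_0$-term. By Theorem~\ref{main2}, $q_*[\tildeMX^{\lambda_0}]^{\vv}$ equals $[\bar M_{0,1}(X,\beta)]^{\vv}$ capped with the tautological class produced by the relative obstruction theory of $q$ along its $\bar M_{1,1}$-fibres. Identifying this class is the crux: the contracted elliptic curve contributes $H^1(\mathcal O_E)$ twisted into $X$, and Zinger's blow-up converts the excess piece of $H^1(C,f^{*}TX)$ into a rank-two bundle $\mathbb E^{\vee}\otimes\mathcal N$, where $\mathcal N$ is the normal direction at the node with $c_1(\mathcal N)=\psi_1$ (using $c_1(TX)=0$); the fibre integral therefore reduces to the genus-one Hodge number $\int_{\bar M_{1,1}}\lambda_1=\tfrac1{24}$, and one finds
\[
q_*[\tildeMX^{\lambda_0}]^{\vv}=-\tfrac1{24}\,\psi_1\cap[\bar M_{0,1}(X,\beta)]^{\vv}.
\]
Pushing this forward along the forgetful morphism $\pi\colon\bar M_{0,1}(X,\beta)\to\bar M_{0,0}(X,\beta)$ (the universal curve, so $[\bar M_{0,1}(X,\beta)]^{\vv}=\pi^{*}[\bar M_{0,0}(X,\beta)]^{\vv}$) and invoking the genus-zero dilaton equation $\pi_*\bigl(\psi_1\cap[\bar M_{0,1}(X,\beta)]^{\vv}\bigr)=-2\,[\bar M_{0,0}(X,\beta)]^{\vv}$ yields $\tfrac1{12}[\bar M_{0,0}(X,\beta)]^{\vv}$, of degree $\tfrac1{12}\GW^X_{0,\beta}$. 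Assembling the three contributions gives the claimed formula.

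The hard part is the identification in the previous paragraph: making the strong virtual push-forward of Theorem~\ref{main2} concrete enough to pin down both the coefficient $-\tfrac1{24}$ and the class $\psi_1$. This requires a careful description of how the blow-up of~\cite{z,VZ} alters the obstruction sheaf $H^1(C,f^{*}TX)$ transversally to the ghost stratum $\tildeMX^{\lambda_0}$, turning an excess intersection into an honest rank-two bundle, followed by the evaluation of the resulting Hodge-type integral on $\bar M_{1,1}$. The remaining ingredients --- the compatibility $\tilde\rho_*[\tildeMX]^{\vv}=[\bar M_{1,0}(X,\beta)]^{\vv}$, the dimension count eliminating $k\ge 2$, and the dilaton computation --- are routine.
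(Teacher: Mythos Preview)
Your overall strategy matches the paper's: invoke the splitting, discard the $k\ge 2$ ghost components, and compute the $k=1$ contribution. Your elimination of $k\ge 2$ via $\vdim P(X)^{\lambda}=3-2k<0$ is a clean variant of the paper's Proposition~\ref{lambda}, which instead shows $\tilde r^{\lambda}_*[\BltildeMXP^{\lambda}]^{\vv}=0$ by a dimension count on the target; both arguments are valid and essentially equivalent.

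The genuine difference is in the $k=1$ computation. You push $q$ to $\bar M_{0,1}(X,\beta)$, identify the excess bundle as $\E^\vee\otimes\mathcal N$ with $\mathcal N=N_{C/X}|_Q$ of rank two and $c_1(\mathcal N)=\psi_1$ (using $c_1(TX)=0$), integrate over the $\bar M_{1,1}$-fibre to get $-\tfrac{1}{24}\psi_1$, and then apply the dilaton relation on $\bar M_{0,1}(X,\beta)\to\bar M_{0,0}(X,\beta)$ to obtain $\tfrac{1}{12}$. This is correct: expanding $c_2(\E^\vee\otimes\mathcal N)=\lambda_1^2-\lambda_1\psi_1+c_2(\mathcal N)$ and integrating over the fibre indeed yields $-\tfrac{1}{24}\psi_1$. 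The paper bypasses this two-step route: it pushes directly to $\bar M_{0,0}(X,\beta)$, whose generic fibre is $\pp^1\times\bar M_{1,1}$, and computes the single number $c_2(E|_F)\cdot[F]=\tfrac{2+K_X\cdot\beta}{24}$ there (Lemma~\ref{nicelocus} and Theorem~\ref{degree one}). The paper's approach is shorter and never needs to name the class $\psi_1$ or invoke dilaton; your approach has the advantage of making the tautological content of the pushforward explicit, which would be useful if one wanted the cycle-level statement on $\bar M_{0,1}(X,\beta)$ rather than just the number. What you flag as the ``hard part'' --- the precise identification of the excess bundle on the ghost stratum --- is exactly what the paper carries out in Lemma~\ref{nicelocus}, via the sequence $0\to O(\xi_1)\to f^*T_X|_Q\otimes\E^\vee\to E\to 0$; once that is in hand, both routes are straightforward.
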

\noindent Theorem~\ref{thm:LZ_formula} is a particular case of \cite{z} which holds for $X$ any compact symplectic manifold of dimension 2 and 3 and of \cite{zingerstvsred} which holds for $X$ any compact symplectic manifold of any dimension. A similar statement appears in \cite{LZ, lz2}. Algebraically it has been proved by Chang and Li~\cite{lichang}, for $X$ the quintic threefold, using a slightly different definition for reduced Gromov--Witten invariants to Zinger \cite{z}. See below for a detailed discussion.The algebraic method uses the moduli space of maps with fields~\cite{lichang-fields}. Note that unlike \cite{lz2,lichang} we do not require $X$ to be a complete intersection. 

 \subsection*{Outline of the proof} 

The key ingredient in our splitting of the virtual class \eqref{eq:main} is a functoriality property for virtual classes.  This generalises the functoriality result of Kim--Kresch--Pantev~\cite{kkp,eu}.  Their setting is the following.  Suppose that we are given 
\begin{itemize}
\item DM-type morphisms of stacks $i \colon F\to G$ and $j \colon G\to H$;
\item a compatible triple ($E_{F/G}^\vee$, $E_{F/H}^\vee$, $E_{G/H}^\vee$) of perfect dual obstruction theories, that is, perfect dual obstruction theories $E_{F/G}^\vee$, $E_{F/H}^\vee$, $E_{G/H}^\vee$ which sit in a commutative diagram
  \[
    \xymatrix{
      i^* E_{G/H} \ar[r] \ar[d] & E_{F/H} \ar[r] \ar[d] & E_{F/G} \ar[r]^{+1} \ar[d] & \\
      i^* L_{G/H} \ar[r] & L_{F/H} \ar[r] & L_{F/G} \ar[r]^{+1} & \\
    }
  \]
  where the rows are distinguished triangles.  Let us write 
  \begin{align*}
    \frE_{F/G}  = h^1/h^0(E_{F/G}^\vee), && 
    \frE_{F/H}  = h^1/h^0(E_{F/H}^\vee), &&
    \frE_{G/H}  = h^1/h^0(E_{G/H}^\vee)
  \end{align*}
  for the vector bundle stacks determined by the obstruction theories.
\end{itemize}
These data determine
\begin{enumerate}
\item a morphism $F\times\pp^1\to \Def _{G} H$; and
\item a perfect dual obstruction theory $E_{F\times\pp^1/\Def _{G} H}$;
\end{enumerate}
where  $\Def _{G} H$ is the deformation to the normal cone~\cite{f,Kresch}.  In turn, these determine a family of cone stacks~\cite{bf}, and an embedding of this family into a vector bundle stack.  On the general fiber, this is
\[
C_{F/H}\to \frE_{F/H}
\]
and the special fibre 
\[
C_{F/H}^{\limiting} \to \frE_{F/G}\oplus i^* \frE_{G/H}.
\]
satisfies 
\[
[C_{F/H}^{\limiting}] = [C_{F/C_{G/H}}].
\]
The latter equality holds in the Chow group of the double deformation space $\Def_{F \times \pp^1} \Def_{G} H$.  It follows that
\[
i^! [G]^\vv = [F]^\vv.
\]

We would like to apply this with $F = M(X)$, $G = M(\pp)$, and $H$ equal to the Picard stack $\frPic$, and then argue that, since $M(\pp)$ has components, this gives a splitting of the virtual class: 
\[
[M(X)]^\vv = i^![M(\pp)^0] + \sum_{\lambda \in I} i^![M(\pp)^\lambda]^\vv.
\]
The problem with this is that $E_{M(X)/M(\pp)}$ is not perfect, and so in particular $i^!$ does not make sense. Also, we do not have a perfect dual obstruction theory $E_{M(X)\times\pp^1/\Def _{M(\pp)} \frPic}$. However, inspired by cosection localisation~\cite{KLcosection}, we will resolve these problems by blowing up.

The first step is to replace $M(X)$, $M(\pp)$, and $\frPic$ by the Vakil--Zinger blow-ups $\tildeMX$, $\tildeMP$, and $\tildefrPic$.  The dual obstruction theory $E_{\tildeMX/\tildeMP}$ is still not perfect but it is now a union of vector bundles, which have different ranks on the different components $\tildeMX^\lambda$, $\lambda \in {0} \cup I$.  Focus now on a ghost component $\tildeMX^\lambda$.  The restriction to $\tildeMX^\lambda$ of $E_{\tildeMP/\tildefrPic}$ is too large to sit in a compatible triple (of restrictions)
\[
\left(\text{$E_{\tildeMX/\tildeMP}^\vee$, $E_{\tildeMX/\tildefrPic}^\vee$, $E_{\tildeMP/\tildefrPic}^\vee$}\right).
\]
so we would like to construct a reduced version of $E_{\tildeMP/\tildefrPic}$ on $\tildeMX^\lambda$.  To do this, we take further blow-ups $\BltildeMP$ of $\tildeMP$ and $\BltildeMX$ of $\tildeMX$.  After restricting to the ghost component $\BltildeMX^\lambda$ of $\BltildeMX$, there is a compatible triple
\[
\left(\text{$E_{\BltildeMX/\BltildeMP}^\vee$, $E_{\BltildeMX/\tildefrPic}^\vee$, $E_{\lambda,\red}^\vee$}\right)
\]
of dual obstruction theories, where $E_{\lambda,\red}^\vee$ is a complex defined on $\BltildeMX^\lambda$ which plays the role of a reduced dual obstruction theory.  The complex $E_{\lambda,\red}^\vee$ is perfect, and allows us to define virtual classes on the ghost components\footnote{There is a small lie here.  In the main text we define the virtual classes of ghost components after passing to the further blow-up $\Def^\prime_{\BltildeMP} \tildefrPic$.  But we could have used $E_{\lambda,\red}^\vee$ instead.}.  It also solves the second problem mentioned above, at least on the ghost components: $E_{\lambda,\red}^\vee$ induces a perfect dual obstruction theory  
\begin{equation}
  \label{induced on lambda}
  E_{\BltildeMX\times\pp^1/\Def _{\BltildeMP} \tildefrPic}\big|_{\BltildeMX^\lambda}.
\end{equation}

It remains to consider the main component $\BltildeMX^0$.  In this case $E_{\BltildeMP/\tildefrPic}^\vee$ has the correct rank, but it fails to sit in a distinguished triangle
\[
\left(\text{$E_{\BltildeMX/\BltildeMP}^\vee$, $E_{\BltildeMX/\tildefrPic}^\vee$, $E_{\BltildeMP/\tildefrPic}^\vee$}\right)
\]
and the induced obstruction theory
\begin{equation}
  \label{induced on main}
  E_{\BltildeMX\times\pp^1/\Def_{\BltildeMP} \tildefrPic}\big|_{\BltildeMX^0}
\end{equation}
fails to be perfect along a divisor $\delta$ in the special fiber $\BltildeMX\times\{0\}$.  These are essentially the same problem.  We resolve it by blowing up the deformation space $\Def_{\BltildeMP} \tildefrPic$ along $\delta$, obtaining a new space $\Def^\prime_{\BltildeMP} \tildefrPic$, and then truncating the pullback of the obstruction theory \eqref{induced on main}.  By construction this truncation is perfect; it has the same general fiber as before, but a different special fiber.

Write $Z(X)^0$ for the blow-up of $\BltildeMX^0\times\pp^1$ and $Z(X)^{\lambda}$ for the blow-up of $\BltildeMX^0\times\pp^1$.  At this point we have vector bundle stacks   
\[
\mathfrak{H}^0 \to Z_0
\]
 built from the obstruction theory~(\ref{induced on main})
and 
\[
\mathfrak{H}^{\lambda} \to Z_{\lambda}.
\]
 built from the obstruction theory~(\ref{induced on lambda}). These vector bundle stacks contain families of cones, with general fibres 
\begin{equation*}
\begin{array}{rcl}
C_{\BltildeMX/\tildefrPic}^0&&\text{supported on $\BltildeMX^0$ and}\\
C_{\BltildeMX/\tildefrPic}^\lambda&&\text{supported on $\BltildeMX^{\lambda}$.}
\end{array}
\end{equation*}
 The class of special fibre supported on the main component can be written as 
\begin{equation*}[C_{\BltildeMX^0/C^0_{\BltildeMP\tildefrPic}}]+ [\text{correction class}] 
\end{equation*}
in $A_*(\mathfrak{H}^0)$ and the class of the special fibre supported on the ghost components can be written as 
\begin{equation*}
[C_{\BltildeMX^{\lambda}/C^{\lambda}_{\BltildeMP\tildefrPic}}]+ [\text{correction class}]
\end{equation*}
in $A_*(\mathfrak{H}^{\lambda})$. Here $C^0_{\BltildeMP\tildefrPic}$ and $C^{\lambda}_{\BltildeMP\tildefrPic}$ denote the components of $C_{\BltildeMP\tildefrPic}$ supported on the main component and on the ghost component respectively. We show that when we sum over $\lambda$ in $\{0\} \cup I$, the correction classes cancel. This is done in the proof of Theorem~\ref{main theorem}. 

The classes 
\begin{align*}[C_{\BltildeMX^0/C^0_{\BltildeMP\tildefrPic}}] \in A_*(\mathfrak{H}^0)&&\text{and}&&[C_{\BltildeMX^{\lambda}/C^{\lambda}_{\BltildeMP\tildefrPic}}]\in A_*(\mathfrak{H}^{\lambda})
\end{align*} 
define virtual classes that satisfy:
\[ 
  [\BltildeMX]^{\vv}=[\BltildeMX^0]^{\vv} + \sum_{\lambda \in I} [\BltildeMX^{\lambda}]^{\vv} 
\]
Pushing this splitting forward to $\tildeMX$ proves Theorem~1. 
The final step is to show that the splitting behaves well with respect to push forward. This is the content of section \S\ref{contribution}.

Theorem~\ref{main} is a particular case of a functoriality property for virtual classes in the presence of a $3$-term obstruction theory. We expect that a more general functoriality statement would solve many related questions. For example, this would apply to derive an analogue of Theorem~\ref{thm:LZ_formula} in higher genus. We will address these topics in future work.  

\subsection*{Relation to other works} Reduced genus 1 invariants are the output of a long and impressive project. Reduced invariants were defined, using symplectic methods, and compared to Gromov--Witten invariants by Zinger\cite{zsharp,zstructure, z,zingerstvsred}. Li--Zinger showed \cite{LZ, lz2} that reduced Gromov--Witten invariants are the integral of the top Chern class of a sheaf over the main component of $M(\pp)$; this is an analog, for reduced genus 1 invariants, of the quantum Lefschetz hyperplane property \cite{LZ, lz2}. In view of \cite{z} this also gives a proof of Theorem~\ref{thm:LZ_formula}. The algebraic definition requires a blow-up construction for the moduli space of stable maps to projective space due to Vakil and Zinger \cite{VZ,VZpreview}. Explicit local equations for this blow-up are given in \cite{zsharp,huli}. A modular interpretation of reduced invariants via log maps has been given by Ranganathan, Santos-Parker and Wise \cite{rspw}. More recently, reduced invariants for the quintic threefold have been compared to Gromov--Witten invariants using algebro-geometric methods by Chang and Li \cite{lichang}. As we do, Chang--Li \emph{define} reduced invariants as the integral against the top Chern class class of a sheaf but, as discussed above, this gives the same reduced invariants as \cite{z}. The algebraic comparison relies on the construction of maps with fields due to Chang and Li \cite{lichang-fields}, and on Kiem--Li's cosection localised virtual class \cite{KLcosection}. A new proof of this comparison for complete intersections in projective spaces appears in \cite{leeoh}. Zinger has computed reduced invariants of projective hypersurfaces via localisation~\cite{zingred}. See~\cite{zsurvey} for a survey from the symplectic perspective.

Reduced Gromov--Witten invariants are also related to Gopakumar--Vafa invariants \cite{gv1,gv2}, and they coincide with Gopakumar--Vafa invariants for Fano targets \cite{degcont}. Indeed the Gopakumar--Vafa invariants are by definition related to Gromov--Witten invariants by a recursive formula which takes into account degenerate lower genus and lower degree boundary contributions. These contributions were computed by Pandharipande in \cite {degcont}. Recently, reduced invariants have been related to invariants from maps with cusps \cite{bcm2}.

\newpage

\section{Notation}

The following is a table of the most frequently used notations in the paper.

\begin{table}[h!]
  \centering
  \begin{tabular}{ll}
    \toprule
    $X$ & a smooth projective variety \\ 
    
    $N$ & the normal bundle $N_{X/\pp^r}$ of $X$ in $\pp^r$\\

    $\frM_{1,n}$ or $\frM$ & the moduli space of prestable genus-one curves with \\
        & $n$ marked points\\

    $\tildefrM$ & the Vakil--Zinger blow-up of $\frM_{1,n}$\\

    $\frPic$  & the relative Picard stack over $\frM_{1,n}$ parametrizing line \\
    & bundles of degree~$\beta$ \\
    
    $\tildefrPic$ & the relative Picard stack over $\tildefrM_{1,n}$ \\

    $\Xi_i$ & exceptional divisors on $\tildefrPic$ \\
    
    $M(\pp)$ & the moduli space of genus one stable maps to $\pp^r$ \\
    
    $C(\pp)$ & the universal curve over $M(\pp)$\\
    
    $f:C(\pp)\to \pp$ & the universal stable map\\
    
    $L$ & the universal line bundle on $C(\pp)$\\
    
    $\tildeMP^{\lambda}$ & a ghost component of $\tildeMP$ \\
    
    $M_0(\pp)$ & the moduli space of genus zero maps to $\pp^r$\\
    
    $M_0(X)$ & the moduli space of genus zero maps to $X$\\

    $M(X)$ & the moduli space of genus 1 stable maps to $X$\\

    $\tildeMX$ & the product $M(X)\times_{\frM}\tildefrM$ \\

    $\tildeMX^0$ & the main component of $\tildeMX$ \\

    $\tildeMX^{\lambda}$ & a ghost component of $\tildeMX$\\

    $A$ & a divisor on a family of curves which is given by a section \\
    & of the family which meets every genus one subcurve\\

    $\sigma$ & a section of $\mathcal O(A)$ \\

    $V$ & a smooth space over $\frPic$ in which we embed $M(\pp)$ \\
    
    $\tildeV$ & a smooth space over $\tildefrPic$ in which we embed $\tildeMP$ \\
    
    $N^0$ & the main component of $\pi_*\ev^*N$\\

    $N^{\lambda}$ & the component of $\pi_*\ev^*N$ supported on $\tildeMP^{\lambda}$\\

    $\BltildeV$ & a further blow up of $\tildeV$ \\
    
    $\BltildeMP$ & the fibre product $\tildeMP\times _{\tildeV}\BltildeV$\\
    
    $\BltildeMX$ & the fibre product $\tildeMX\times _{\tildeV}\BltildeV$\\
    
        $C_{F/G}$ & the normal cone (stack) of a DM type morphism $F\to G$ \\ 
    & of algebraic stacks\\    

    $E^{\bullet}_{F/G}$ & an obstruction theory for a DM type morphism $F\to G$ \\
    & of algebraic stacks\\
    
    $\Def_FG$ & the deformation space of $G$ to $C_{F/G}$\\ \bottomrule \\
  \end{tabular}
  \caption{Frequently used notation}
\end{table}

\newpage

\section{Reduced Gromov--Witten Invariants}

In this section we define the genus-one reduced Gromov--Witten invariants of a smooth projective variety $X$, following Vakil--Zinger~\cite{VZ, VZpreview}.  This is an algebro-geometric version of the symplectic story developed in~\cite{z}.  We begin by explaining how to embed the moduli space $M(\pp)$ of genus-one stable maps to projective space into a smooth stack, giving a construction due to Ciocan-Fontanine--Kim~\cite{cfk}.  We then introduce the Vakil--Zinger blow-up of the moduli space $M(\pp)$, and use it to define the reduced invariants of $X$.

\subsection{Embeddings of $\bar{M}_{1,n}(\pp^r,\beta)$ after Ciocan-Fontanine--Kim}\label{embsmooth}\label{cfk}  We will now describe an embedding of $M(\pp)$ into a smooth stack $V$, following~\cite{cfk}.  Let $\frM_{1,n}$ denote the stack of prestable genus-one curves with $n$ marked points, and let $\frPic \to \frM_{1,n}$ denote the relative Picard stack of line bundles of degree~$\beta$.  It is well known that $\frM_{1,n}$ is a smooth Artin stack of dimension $n$. The stack $\frPic$ is smooth over $\frM_{1,n}$ of relative dimension zero, by~\cite[Remark~2.5]{eu3}; thus $\frPic$ is also smooth of dimension $n$.  Let $\pi \colon \frC \to \frPic$ be the universal curve and $\frL \to \frC$ be the tautological line bundle.  Let $\Picsms$ denote the open substack of $\frPic$ obtained by imposing the stability condition
\[
\text{$\omega_\pi(\frP_1+\cdots+\frP_n) \otimes \frL^3$ is $\pi$-relatively ample}
\]
where $\frP_1,\ldots,\frP_n$ are the divisors in $\frC$ defined by the marked points.   Slightly abusing notation, we will denote by $\pi \colon \frC\to \Picsms$ the universal curve, and by $\frL \to \frC$ the tautological line bundle.  

We now construct a smooth Deligne--Mumford stack $V$ into which $M(\pp)$ will embed.  The stack $M(\pp)$ parametrizes, up to isomorphism, tuples
\begin{equation*} 
\big(C;p_1,...,p_n;L; u_0,\ldots,u_r\big) 
\end{equation*} 
where 
\begin{enumerate}
  \renewcommand{\labelenumi}{(\roman{enumi})}
\item $C$ is a nodal curve of arithmetic genus one;
\item $p_1,\ldots,p_n$ are distinct marked smooth points on $C$;
\item $L$ is a line bundle on $C$ of degree $\beta \cdot H$;
\item $u_0, \ldots,u_r$ are global sections of $L$;
\end{enumerate}
such that $\omega_C(p_1+\cdots+p_n)\otimes L^3$ is ample and that the base locus of $u_0,\ldots,u_r$ is empty.  Let $\pi \colon C(\pp) \to M(\pp)$ denote the universal family.  Choose a $\pi$-relatively very ample effective Cartier divisor on the universal family $\frC \to \frPic$.  Let $\frA \to \frC$ denote the corresponding line bundle and $\sigma \colon \frC \to \frA$ denote the corresponding section.  The map $M(\pp) \to \frPic$ that classifies $L$ induces a map $C(\pp) \to \frC$, and we pull back $\frA$ along this map.  This gives a line bundle $\tildeA \to C(\pp)$ which is very ample on each fiber of $C(\pp) \to M(\pp)$; we have that $R^1 \pi_* (L \otimes \tildeA\,)$ vanishes on $M(\pp)$.

Consider now the total space $\frX$ of the bundle $\pi_* \big(\frL \otimes \frA\big)^{\oplus(r+1)}$ over $\Picsms$. This is an Artin stack which parametrizes, up to isomorphism, tuples
\begin{equation*} \big(C;p_1,...,p_n;L; v_0,\ldots,v_r\big) \end{equation*} 
where \begin{enumerate}
  \renewcommand{\labelenumi}{(\roman{enumi})}
\item $C$ is a nodal curve of arithmetic genus one;
\item $p_1,\ldots,p_n$ are distinct marked smooth points on $C$;
\item $L$ is a line bundle on $C$ of degree $\beta \cdot H$;
\item $v_0, \ldots,v_r$ are global sections of $L \otimes \tildeA$, where $\tildeA := \frA|_C$.
\end{enumerate}
Let $V$ be the open substack of $\frX$ obtained by insisting that the sections $v_0,\ldots,v_r$ have only finitely many basepoints, and let $\varpi \colon V\to \Picsms$ denote the projection.  The stack $V$ is of Deligne--Mumford type.  It carries a perfect obstruction theory relative to $\varpi$ given by the dual to $R^\bullet \pi_* (\frL \otimes \frA)^{\oplus(r+1)}$; thus $V$ is smooth.

To embed $M(\pp)$ into $V$, consider the sheaf $\frE$ on $\frC$ determined by the exact sequence 
\[
\xymatrix{
  0 \ar[r] & \frL\ar[r]^-{ \cdot \sigma} & \frL \otimes \frA \ar[r] & \frE \ar[r] & 0 
}
\]
where the labelled map is multiplication by the section $\sigma$.  The vector bundle
\[
\frN:=\varpi^*\pi_*\frE^{\oplus(r+1)}
\]
over $V$ comes equipped with a tautological section induced by the morphism $\frL \otimes \frA \to \frE$. Let $Z$ be the zero locus of this tautological section. On $Z$, the sections $v_i$ of $L \otimes \tildeA$ are all divisible by our chosen section $\sigma|_C$ of $\tildeA$; let $u_i \in H^0(C,L)$ denote the result of dividing $v_i$ by this chosen section.  Then we have that $M(\pp)$ is the open substack of $Z$ obtained by imposing the stable map non-degeneracy condition: that $u_0, \ldots, u_r$ have no basepoints.

If $m_{\sigma} \colon M(\pp) \to V$ is the embedding just constructed then the complex dual to $[m_{\sigma}^*T_V  \to m_{\sigma}^*\frN]$ gives a perfect obstruction theory for $M(\pp)$ relative to $\varpi$.  This coincides with the usual perfect obstruction theory relative to $\varpi$, which is given by the complex dual to $R^\bullet \pi_* \ev^* O(1)^{\oplus(r+1)}$ where 
\[
\xymatrix{
  C(\pp) \ar[r]^\ev \ar[d]_-{\pi} & \pp^r \\
  M(\pp)
}
\]
is the universal family.
\subsection{The Vakil--Zinger desingularization of $\bar{M}_{1,n}(\pp^r,\beta)$}

We now review the construction, due to Vakil--Zinger~\cite{VZ} and Hu--Li~\cite{huli}, of a partial desingularization $\tildeMP$ of $M(\pp)$.  We give a variant of their construction, which sits in a Cartesian diagram
\begin{equation}\label{blowing up}
  \begin{aligned}
    \xymatrix{\tildeMP\ar[r]^s\ar[d]_{\tilde{\nu}}&M(\pp)\ar[d]^{\nu}\\
      \tildefrPic \ar[r] & \frPic
    }
  \end{aligned}
\end{equation}
where the map $\tildefrPic \to \frPic$ blows up the locus in $\frPic$ where the line bundle has degree zero on the genus-one component.  More precisely, let $\Delta_k$ denote the closure in $\frPic$ of the locus where the line bundle has degree zero on the genus-one component $C_E$, and $C_E$ meets $k$ rational components.  We blow up $\frPic$ along $\Delta_1$, $\Delta_2$,\ldots, in that order, obtaining $\tildefrPic$ as the final blow-up.   Diagram \eqref{blowing up} shows that $\tilde{\nu}$ carries a perfect obstruction theory. By construction $\tildefrPic $ is smooth of dimension $n$ and therefore we can define a virtual class 
\[
\big[\tildeMP\big]^{\vv}=\tilde{\nu}^!\big[\tildefrPic \big].
\]
Moreover $s$ is proper, and so by Costello's push--forward theorem~\cite{costello} 
\[
s_*[\tildeMP]^{\vv}=[M(\pp)]^{\vv}.
\]

\begin{remark}
  The morphism $\tildefrPic \to \frPic$ is the analog for the Picard stack of the Vakil--Zinger weighted blow up $\tildefrM^{\mathrm{wt}}  \to \frM^{\mathrm{wt}}$, where we use notation as in~\cite{huli}.  Indeed $\tildefrPic$ is the base change of $\tildefrM^{\mathrm{wt}}  \to \frM^{\mathrm{wt}}$ along the forgetful morphism $\frPic \to \frM^{\mathrm{wt}}$.
\end{remark}

The space $\tildeMP$ has a main component denoted by $\tildeMP^0$, where the generic map has smooth domain, and other components $\tildeMP^\lambda$, $\lambda \in I$; Hu--Li refer to the $\tildeMP^\lambda$, $\lambda \in I$, as `ghost components'.  Generic stable maps in a ghost component $\tildeMP^\lambda$ have a well-defined number of rational components that meet the elliptic component $C_E$, and so $\tildeMP^\lambda$ sits over $\Delta_k$ for some unique $k$; we refer to this number $k$ of rational components as $k(\lambda)$.  Let $\tildeV = V\times_{\frPic}\tildefrPic$. Then $\tildeV$ is smooth and we have a Cartesian diagram
\begin{equation*}
\xymatrix{\tildeMP \ar[r]\ar[d]&\tildeV\ar[d]\\
M(\pp) \ar[r]^-{m_{\sigma}}&V}
\end{equation*}
such that
\[
[\tildeMP]^{\vv}=m_\sigma^![\tildeV].
\]

\begin{proposition} \label{main_is_smooth} The stack $\tildeMP^0$ is smooth.
\end{proposition}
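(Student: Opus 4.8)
The plan is to show that $\tildeMP^0$ is smooth by exhibiting it, locally, as the zero locus of a regular section of a vector bundle over a smooth ambient stack, with the expected codimension. Recall from the Ciocan-Fontanine--Kim construction that $M(\pp)$ embeds into the smooth stack $V$ as the zero locus of the tautological section of $\frN = \varpi^*\pi_*\frE^{\oplus(r+1)}$, and that after base change to $\tildefrPic$ we get an embedding $\tildeMP \hookrightarrow \tildeV$ with $\tildeV$ smooth. The point of passing to the Vakil--Zinger blow-up is that, by the results of Vakil--Zinger and Hu--Li \cite{VZ,huli}, the pullback of $\pi_*\frE$ to $\tildeV$ acquires a canonical sub-bundle structure: there is a \emph{rank-one reduction} $N^0 \subseteq \pi_*\ev^*N$ (in the notation of the table), so that on the main component the tautological section of $\frN$ factors through a sub-bundle whose rank equals the codimension of $\tildeMP^0$ in $\tildeV$. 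Concretely, the divisibility obstruction defining $M(\pp)$ inside $V$, which near the ghost locus vanishes to higher order, becomes (after the blow-ups $\Delta_1,\Delta_2,\ldots$) a section cutting out $\tildeMP$ with a well-controlled scheme structure.

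The key steps, in order, would be: (i) work locally near a point of $\tildeMP^0$, using the explicit local equations for the Vakil--Zinger blow-up recorded in \cite{zsharp,huli}; (ii) identify, on the locus of $\tildeV$ lying over the main component, a sub-bundle $\frN^0 \subseteq \frN$ through which the tautological section factors, with $\operatorname{rank}\frN^0 = \operatorname{rank}\frN - (\text{excess})$ matching $\dim V - \dim M(\pp)$; (iii) check that the induced section of $\frN^0$ is regular along $\tildeMP^0$, i.e. its zero locus has the expected codimension, so that $\tildeMP^0$ is a local complete intersection in the smooth stack $\tildeV$; (iv) conclude that $\tildeMP^0$ is Cohen--Macaulay of the expected dimension, and finally upgrade to smoothness by verifying that the derivative of the section (equivalently the relevant cohomology group $R^1\pi_*$ of the reduced complex) vanishes at every point of $\tildeMP^0$, which is precisely the statement that the main-component obstructions vanish. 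This last vanishing is essentially the content of the Vakil--Zinger desingularization theorem.

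The main obstacle I expect is step (iii)--(iv): controlling the scheme structure of the proper transform $\tildeMP^0$ and proving that the blow-ups along $\Delta_1,\ldots,\Delta_k$ exactly separate the main component from the ghost components \emph{and} remove the excess intersection, rather than merely reducing it. This requires a careful local analysis of how the equation ``$v_i$ is divisible by $\sigma$'' transforms under the iterated blow-up; one must track the multiplicity of the exceptional divisors $\Xi_i$ on $\tildefrPic$ in the pulled-back section and show that after dividing out these divisors the residual section is transverse along the main component. The combinatorics of nested genus-one subcurves meeting varying numbers of rational tails (encoded in the $\Delta_k$ and in $k(\lambda)$) is what makes this delicate; but since we only need smoothness of $\tildeMP^0$ — not of all of $\tildeMP$ — it suffices to carry out the analysis at a generic-type point of the main component and then use that the bad locus has been blown up away. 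I would lean on \cite{VZ,huli} for the hardest part of this transversality statement, citing it as the input that makes the section regular with vanishing first-order obstruction.
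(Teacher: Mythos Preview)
The paper's own proof is a bare citation: it refers to \cite[Theorem~1.1]{VZ}, \cite[Theorem~2.18]{huli}, and the log-geometric description in \cite{rspw}, and gives no further argument. Your proposal is therefore not really in competition with an independent proof; you are sketching the kind of local-equation and obstruction-vanishing argument that those cited works actually carry out, and indeed you end by saying you would cite \cite{VZ,huli} for the hardest step. So in substance you and the paper agree: smoothness of $\tildeMP^0$ is an input from Vakil--Zinger/Hu--Li, not something re-proved here.

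One genuine confusion in your sketch deserves flagging. You invoke ``a rank-one reduction $N^0 \subseteq \pi_*\ev^*N$ (in the notation of the table)'' as the mechanism for reducing the obstruction bundle on the main component. But in this paper $N = N_{X/\pp^r}$ and $N^0$ is the main component of $\pi_*\ev^*N$ on $\tildeMX$; these objects only appear once a target $X \hookrightarrow \pp^r$ is fixed, whereas Proposition~\ref{main_is_smooth} is purely a statement about maps to $\pp^r$ and involves no $X$. The correct obstruction sheaf governing smoothness of $\tildeMP^0$ is built from $R^1\pi_* L^{\oplus(r+1)}$ (equivalently, from the excess in the Ciocan--Fontanine--Kim section of $\frN = \varpi^*\pi_*\frE^{\oplus(r+1)}$), and what Vakil--Zinger and Hu--Li show is that on the main component this excess becomes locally free after the blow-ups along $\Delta_1,\Delta_2,\ldots$, with the section transverse. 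Your steps (i)--(iv) describe exactly this shape of argument, but the bundle you named is the wrong one. If you correct the notation and replace $N^0$ by the appropriate sub-bundle of $\frN$ (or, equivalently, track $R^1\pi_*L$), your outline matches the proofs in the cited references.
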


\begin{proof}
 See ~\cite[Theorem~1.1]{VZ}; it also follows from~\cite[Theorem 2.18]{huli} or the description in \cite{rspw}. 
\end{proof}

Let $X\hookrightarrow \pp^r$ be a smooth projective variety, and let $M(X) = \bar{M}_{1,n}(X,\beta)$. Let 
\begin{align*}
  \tildeMX &=\tildeMP \times_{M(\pp)} M(X), &&
  \tildeMX^0 =\tildeMP^0 \times_{M(\pp)} M(X) \\
  \intertext{and} 
  \tildeMX^\lambda &= \tildeMP^\lambda \times_{M(\pp)} M(X), && \lambda \in I.
\end{align*}
Let $N$ denote the normal bundle $N_{X/\pp^r}$, and let $j \colon \tildeMX \to \tildeMP$ denote the morphism induced by the embedding of $X$ in $\pp^r$.

\begin{proposition} \label{Nlambda} The total space of the sheaf $\pi_*\ev^*N$ on $\tildeMX$ has components $N^{\lambda}$, $\lambda \in I \cup \{0\}$, where $N^\lambda$ is the total space of a vector bundle over $\tildeMX^\lambda$.  The rank of the vector bundle $N^\lambda$ is
\[
\begin{cases}
  \beta \cdot \deg X & \lambda = 0 \\
  \beta \cdot \deg X + \codim X & \text{otherwise.}
\end{cases}
\]
\end{proposition}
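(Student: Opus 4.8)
The plan is to analyse $\pi_*\ev^*N$ one component of $\tildeMX$ at a time, by fibrewise cohomology, exploiting the global generation of the normal bundle. First I would record that $N=N_{X/\pp^r}$ is globally generated: by the normal bundle sequence $0\to TX\to T\pp^r|_X\to N\to 0$ it is a quotient of $T\pp^r|_X$, and by the Euler sequence $T\pp^r$ is a quotient of $\mathcal O(1)^{\oplus(r+1)}$, so $N$ is a quotient of a globally generated bundle, hence globally generated. Consequently $\ev^*N$ is globally generated on the universal curve $\pi\colon C\to\tildeMX$ and on all of its fibres; in particular a globally generated bundle on a nodal curve of arithmetic genus zero has vanishing $H^1$, a fact I use below.

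Next I would treat a ghost component $\tildeMX^\lambda$, $\lambda\in I$, with $k=k(\lambda)$. Since $\tildeMX^\lambda$ lies over $\Delta_k\subset\tildefrPic$, the line bundle has degree zero on the genus-one component there, so the core $C_E$ of the universal curve is $\ev$-contracted, and (from the product presentation of $\tildeMP^\lambda$) the universal curve decomposes globally as $C=C_E\cup T_1\cup\dots\cup T_k$ with rational tails $T_i$ meeting $C_E$ along sections $q_i$. Write $\pi_E\colon C_E\to\tildeMX^\lambda$ and $\ev_E\colon\tildeMX^\lambda\to X$ for the contraction map. Pushing the normalisation sequence
\[
0\to\ev^*N\to(\ev^*N|_{C_E})\oplus\textstyle\bigoplus_i(\ev^*N|_{T_i})\to\textstyle\bigoplus_i\ev^*N|_{q_i}\to 0
\]
forward along $\pi$, global generation makes the connecting homomorphism surjective and kills $R^1$ of the rational-tail terms, so $R^1\pi_*\ev^*N|_{\tildeMX^\lambda}\cong R^1\pi_{E*}(\ev^*N|_{C_E})$. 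As $C_E$ is contracted one has $\ev^*N|_{C_E}\cong\pi_E^*\ev_E^*N$, so the projection formula identifies this with $(R^1\pi_{E*}\mathcal O_{C_E})\otimes\ev_E^*N$; since the fibres of $\pi_E$ are connected of arithmetic genus one, $R^1\pi_{E*}\mathcal O_{C_E}$ is a line bundle and the product is locally free of rank $\codim X$. Being locally free, it forces $\pi_*\ev^*N|_{\tildeMX^\lambda}$ to be locally free as well (cohomology and base change), of rank $\chi(C,\ev^*N|_C)+\codim X$; Riemann--Roch on the arithmetic-genus-one domain gives $\chi(C,\ev^*N|_C)=\deg(\ev^*N|_C)=\beta\cdot\deg X$, so $\pi_*\ev^*N|_{\tildeMX^\lambda}$ has rank $\beta\cdot\deg X+\codim X$ and its total space is the asserted $N^\lambda$.

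On the main component $\tildeMX^0$ the picture is different. At the generic point the domain is a smooth genus-one curve, $H^1(C,\ev^*N|_C)\cong H^0(C,(\ev^*N|_C)^\vee)$ vanishes, and the generic rank of $\pi_*\ev^*N$ is $\chi(C,\ev^*N|_C)=\beta\cdot\deg X$; but $\tildeMX^0$ still meets the exceptional divisors, where the core is again contracted and the computation above makes $H^1$ jump by $\codim X$, so $\pi_*\ev^*N|_{\tildeMX^0}$ is not locally free. To extract from it the total space $N^0$ of a rank-$(\beta\cdot\deg X)$ bundle over $\tildeMX^0$, I would pass to $\tildeMP^0$ (smooth, by Proposition~\ref{main_is_smooth}) and use the Vakil--Zinger/Hu--Li description of the universal curve along the exceptional divisors: it identifies the jumping of $\pi_*\ev^*N$ with a twist by the Hodge line bundle and supplies the locally free replacement whose total space is $N^0$. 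Finally, since $\tildeMX=\bigcup_\lambda\tildeMX^\lambda$, the total space of $\pi_*\ev^*N$ is covered by the $N^\lambda$, $\lambda\in I$, together with $N^0$; any remaining component sits over the locus in $\tildeMX^0$ where $\pi_*\ev^*N$ jumps, which by the above lies in $\tildeMX^0\cap\bigcup_{\lambda\in I}\tildeMX^\lambda$ and is therefore already contained in the ghost-component bundles. This exhibits the $N^\lambda$, $\lambda\in I\cup\{0\}$, as the components of the total space.

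The main obstacle is the main component. On the ghost components everything is Mayer--Vietoris, global generation and semicontinuity; but on $\tildeMX^0$ the pushforward sheaf is genuinely non-locally-free, and seeing that the main component of its cone is linear of the expected rank requires importing the fine geometry of the Vakil--Zinger desingularisation --- this is where the real input lies.
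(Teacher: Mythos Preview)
Your ghost-component argument is correct and parallels the paper's: the paper invokes convexity of $M_{0,n}(\pp^r,d)$ to get constancy of $h^0(C,f^*N)$ across $\tildeMX^\lambda$ and then applies cohomology and base change, while you compute $R^1\pi_*\ev^*N|_{\tildeMX^\lambda}$ directly via the normalisation sequence and identify it with $\ev_E^*N$ twisted by the Hodge line. Both routes give the same locally free sheaf of the stated rank.

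On the main component there is a genuine gap. First a minor point: your Serre-duality claim that $H^0\bigl(C,(\ev^*N|_C)^\vee\bigr)=0$ at a generic point is unjustified---a globally generated bundle of positive degree on an elliptic curve can still have nonzero $H^1$ (e.g.\ $\mathcal O_C\oplus L$ with $\deg L\geq 2$); what actually gives generic vanishing is that $f^*N$ is a quotient of $f^*\mathcal O(1)^{\oplus(r+1)}$, which has $H^1=0$ for a non-constant map from a smooth elliptic curve. More seriously, you correctly flag that producing the locally free $N^0$ over $\tildeMX^0$ needs Vakil--Zinger/Hu--Li input, but you do not say what that input is. The paper names it: over $\tildeMP^0$ one has the contraction of elliptic tails $C_\pp\to\bar C_\pp$, and $N^0$ is \emph{defined} as $(\bar\pi_X)_*\bar\ev^*N$ on the contracted family; local freeness of this pushforward is the content of~\cite[Theorem~1.2]{VZ} and~\cite[Theorem~2.10]{huli}. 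The contraction then furnishes a map $(\bar\pi_X)_*\bar\ev^*N\to\pi_*\ev^*N|_{\tildeMX^0}$ that is injective on every fibre, which is how one sees that the total space of $N^0$ sits inside the total space of $\pi_*\ev^*N$ as an irreducible component. Your phrase ``supplies the locally free replacement'' gestures in the right direction, but without the contraction map you have neither a concrete candidate for $N^0$ nor an embedding of it into $\pi_*\ev^*N$.
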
 

\begin{proof} See~\cite[Theorem~2.10]{huli} and \cite[Theorem~1.2]{VZ}. Let $C_{\pp}\to \bar{C}_{\pp}$ be the contraction of elliptic tails of the universal curve restricted to $\tildeMP^0$ and let $\bar{\pi}_X:\bar{C}_{X}\to \tildeMX^0$ be the projection and $\bar{ev}:\bar{C}_{X}\to X$ the evaluation. By cohomology and base change we have that $(\bar{\pi}_X)_*\bar{\ev}^*N$ is a vector bundle. The morphism $C_{\pp}\to \bar{C}_{\pp}$ 
induces a morphism \[(\bar{\pi}_X)_*\bar{\ev}^*N\to \pi_*\ev^*N|_{\tildeMX^0}.\]
This morphism is injective in all fibres and thus $(\bar{\pi}_X)_*\bar{\ev}^*N$ is a component of the total space of the sheaf $\pi_*\ev^*N$. 
Let $\pi_{\lambda}:C^{\lambda}\to \tildeMX^{\lambda}$ be the universal curve. On $\tildeMX^{\lambda}$ we have that cohomology commutes with base change, so in order to show that $\pi_{\lambda*}f^*N$ is a vector bundle, we show that $H^0(C,f^*N)$ is constant for all $(C,f)\in \tildeMX^{\lambda}$. This follows from the fact that $M_{0,n}(\pp^r,d)$ is convex. Arguing much as in \cite[Theorem~2.18]{huli}, we see that the restriction of $\pi_*\ev^*N$ to $\tildeMX^{\lambda}$ is a component of $\pi_*\ev^*N$.  
\end{proof}

\begin{definition} \label{reduced GW} Set $[\tildeMX^0]^{\vv}=j_{N^0}^![\tildeMP^0]$.
  Let $\gamma_i\in A^{k_i}(X)$ be such that $\sum_{i=1}^nk_i=\vdim\bar{M}_{1,n}(X,\beta) = \vdim \tildeMX^0$.  \emph{Reduced GW invariants} of $X$ are intersection numbers of the form
  \[
  \GW^{X,red}_{1,\beta}(\gamma_1,\cdots,\gamma_n)= [\tildeMX^0]^{\vv} \cdot \prod_{i=1}^n \ev_i^*\gamma_i.
  \]
\end{definition}

\begin{remark} \label{MXP} 
  Later in the paper we will consider the substack $\tildeMXP$ of $\bigcup_{\lambda \in I} \tildeMP^\lambda$ defined by insisting that the collapsed elliptic component maps to $X$.  More precisely, we have that $\tildeMP^{\lambda}\simeq M_{1,k}\times_{\pp}\tilde{M}_{0,n_1+1}(\pp) \times_{\pp}\cdots \times_{\pp}\tilde{M}_{0,n_k+1}(\pp)$. Fix one of the nodes, say the one corresponding to the last marked point of $\tilde{M}_{0,n_1+1}(\pp)$. Let $q:\tildeMP^{\lambda}\to \tilde{M}_{0,n_1+1}(\pp)$ be the projection, and set
  \begin{align*}
    \tildeMXP^\lambda = q^{-1} \left( ev_{n_1+1}^{-1}X\right), && \tildeMXP = \bigcup_{\lambda \in I} \tildeMXP^\lambda.
  \end{align*}
  Note that $\tildeMX^\lambda$ is contained in $\tildeMXP^\lambda$.  The vector bundles $N^\lambda \to \tildeMX^\lambda$ extend to vector bundles on $\tildeMXP^\lambda$, which we also denote by $N^\lambda$, and  Proposition~\ref{Nlambda} holds over $\tildeMXP = \bigcup_\lambda \tildeMXP^\lambda$ too.  The proof is the same.
\end{remark}

\begin{remark}
  Let $F$ denote the kernel of the surjective morphism of vector bundles $O_X(1)^{\oplus(r+1)} \to N$ on $X$.  Then $R^\bullet \pi_* f^* F$ is a perfect dual obstruction theory for $\tildeMX$ relative to $\tildefrPic$, and therefore defines a virtual class $[ \tildeMX ]^\vv$.  This virtual class is in fact intrinsic to $X$, i.e.~independent of the choice of embedding $X \hookrightarrow \pp^r$.  In Definition~\ref{reduced GW} above we defined a virtual class $[ \tildeMX^0 ]^\vv$ on the main component of $\tildeMX$.  In rest of this paper we will define virtual classes $[ \tildeMX^\lambda ]^\vv$ on the ghost components $\tildeMX^\lambda$, $\lambda \in I$, in such a way that
  \[
    \left[ \tildeMX \right]^\vv = 
    \left[ \tildeMX^0 \right]^\vv +
    \sum_{\lambda \in I} \left[ \tildeMX^\lambda \right]^\vv.
  \]
\end{remark}

\section{A further blow-up}

It will be convenient to work with blow-ups $\BltildeMP$ of $\tildeMP$ and $\BltildeV$ of $\tildeV$ with the property that the cone $C_{\BltildeMP/\BltildeV}$ is a line bundle.  In this section we introduce these blow-ups.

\begin{construction} Let $\tildeA$ be a very ample line bundle as in Section \ref{cfk}, let $V$ denote the total space of $\pi_*L(\tildeA)$, and let $\tildeV=V\times_{\frPic}\tildefrPic$. We have an embedding $\tildeMP\to \tildeV$.  Recall that the main component of $\tildeMP$ is denoted by $\tildeMP^0$, and that the remaining components  $M(\pp)^\lambda$ are indexed by $\lambda \in I$.  We fix an order for $\lambda\in I$. We blow up $\tildeV$ along $\tildeMP^0$ and then along $\tildeMP^{\lambda}$, one component at a time in the given order, denoting the resulting blow up by $p_{\tildeV} \colon \BltildeV \to \tildeV$. Note that $\BltildeV$ is smooth. Let $\BltildeMP$ be the exceptional divisor in $\BltildeV$. Then there is a Cartesian diagram
\[\xymatrix{\BltildeMP\ar[r]\ar[d]_{p_{\pp}}&\BltildeV\ar[d]^{p_{\tildeV}}\\
\tildeMP\ar[r]&\tildeV}
\]
and $\BltildeMP$ has components $\BltildeMP^\lambda$, $\lambda \in \{0\} \cup I$. 
\end{construction}
\begin{notation} 
Let $\BltildeMX$ denote the fiber product $\tildeMX\times_{\tildeMP}\BltildeMP$, and let $p_{X}:\BltildeMX\to \tildeMX$ be the morphism induced by $p_{\pp}$.   Recall the definition of $\tildeMXP$ from Remark~\ref{MXP}.  We denote by $\BltildeMXP$ the fibre product $\tildeMXP\times_{\tildeMP}\BltildeMP$.
\end{notation}

\begin{proposition} \label{definevirtualclasses} \ 

(i) $$E^\bullet_{\BltildeMP/\tildefrPic}:=[T_{\BltildeV/\tildefrPic}\to p_{\pp}^* \pi_* L(\tildeA)|^{\oplus(r+1)}_{\tildeA}]$$ is a perfect dual obstruction theory for $\BltildeMP$ relative to $\tildefrPic$. 

(ii) 
\[
E^\bullet_{\BltildeMX/\tildefrPic}:=[T_{\BltildeV/\tildefrPic}\to p_X^*\pi_* L(\tildeA)|^{\oplus(r+1)}_{\tildeA}\oplus p_X^*\pi_* f^*N(\tildeA)\to p_X^*\pi_* f^*N(\tildeA)|_{\tildeA}]
\]
is a perfect dual obstruction theory for $\BltildeMX$ relative to $\tildefrPic$. 
\end{proposition}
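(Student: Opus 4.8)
The plan is to obtain (i) from the Behrend--Fantechi construction of the obstruction theory of the zero scheme of a section of a vector bundle over a smooth base, applied to the Ciocan--Fontanine--Kim embedding of Section~\ref{cfk} after passing to the blow-up, and to obtain (ii) from (i) together with the functoriality of obstruction theories under the compatible triples recalled in the Introduction. Throughout write $\frN:=\pi_{*}L(\tildeA)|^{\oplus(r+1)}_{\tildeA}$, the pushforward of the cokernel $\frE$ of $0\to\frL\xrightarrow{\cdot\sigma}\frL\otimes\frA\to\frE\to 0$ raised to the $(r+1)$st power.

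For (i), recall from Section~\ref{cfk} that, after base change to $\tildefrPic$, the stack $\tildeMP$ is the non-degeneracy open substack of the zero scheme $Z(s)\subset\tildeV$ of the tautological section $s$ of $\frN$ (the base change of the section cutting out $M(\pp)$ in $V$). The square in the Construction being Cartesian, the exceptional divisor $\BltildeMP$ is the scheme-theoretic preimage $\tildeMP\times_{\tildeV}\BltildeV$, hence the non-degeneracy locus of $Z(p_{\tildeV}^{*}s)\subset\BltildeV$. Now $\BltildeV$ and $\tildefrPic$ are smooth, so $T_{\BltildeV/\tildefrPic}$ is perfect of amplitude $[0,1]$, and the standard construction of the dual obstruction theory of the zero scheme of a section of the vector bundle $p_{\tildeV}^{*}\frN$ over $\BltildeV$, taken relative to $\tildefrPic$, produces exactly $[T_{\BltildeV/\tildefrPic}\to p_{\pp}^{*}\frN]|_{\BltildeMP}$; it is perfect because adjoining the bundle $p_{\pp}^{*}\frN$ in degree~$1$ to $T_{\BltildeV/\tildefrPic}$ keeps the amplitude inside $[0,1]$.

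For (ii), recall from the discussion above that $R^{\bullet}\pi_{*}f^{*}F$, where $F=\ker(O_{X}(1)^{\oplus(r+1)}\to N)$, is a perfect dual obstruction theory for $\tildeMX$ relative to $\tildefrPic$, and that it sits in the compatible triple
\[
\left(R^{\bullet}\pi_{*}f^{*}N,\ R^{\bullet}\pi_{*}f^{*}F,\ R^{\bullet}\pi_{*}f^{*}O_{X}(1)^{\oplus(r+1)}\right)
\]
of dual obstruction theories for $\tildeMX\to\tildeMP\to\tildefrPic$ coming from $0\to f^{*}F\to f^{*}O_{X}(1)^{\oplus(r+1)}\to f^{*}N\to 0$ on the universal curve. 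Since $\BltildeMX=\tildeMX\times_{\tildeMP}\BltildeMP=M(X)\times_{M(\pp)}\BltildeMP$ and the universal curve over $\BltildeMP$ is pulled back from that over $\tildeMP$, the blow-up is transverse to the $N$-direction: cohomology and base change give that the $N$-piece of the triple is unchanged over $\BltildeMX$, namely $p_{X}^{*}R^{\bullet}\pi_{*}f^{*}N$, represented via the $\pi$-relatively very ample twist $0\to f^{*}N\to f^{*}N(\tildeA)\to f^{*}N(\tildeA)|_{\tildeA}\to 0$ by $[\,p_{X}^{*}\pi_{*}f^{*}N(\tildeA)\to p_{X}^{*}\pi_{*}f^{*}N(\tildeA)|_{\tildeA}\,]$; while, by part~(i), the $O_{X}(1)$-piece over $\BltildeMX$ becomes the perfect dual obstruction theory $[T_{\BltildeV/\tildefrPic}\to p_{X}^{*}\frN]$. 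The middle term of the resulting compatible triple is then
\[
E^{\bullet}_{\BltildeMX/\tildefrPic}=\mathrm{Cone}\Big([T_{\BltildeV/\tildefrPic}\to p_{X}^{*}\frN]\to[\,p_{X}^{*}\pi_{*}f^{*}N(\tildeA)\to p_{X}^{*}\pi_{*}f^{*}N(\tildeA)|_{\tildeA}\,]\Big)[-1],
\]
which by functoriality is a perfect dual obstruction theory for $\BltildeMX$ relative to $\tildefrPic$; writing out this mapping cone gives precisely the three-term complex in the statement. It is a complex of vector bundles (cohomology and base change plus relative very ampleness of $\tildeA$ make $\frN$, $\pi_{*}f^{*}N(\tildeA)$ and $\pi_{*}f^{*}N(\tildeA)|_{\tildeA}$ locally free), and it has amplitude $[0,1]$: the fibres of $\pi$ are curves, so $R^{2}\pi_{*}f^{*}F=0$, which forces the last differential to be surjective.

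The step I expect to be the main obstacle is the first one in each part: making precise that $\BltildeMP$ — and, for (ii), the universal curve of $\BltildeMX$ — is obtained by the evident base change from $\tildeV$, respectively from the universal curve over $\tildeMX$; that is, that passing to the blow-up $\BltildeV\to\tildeV$ is scheme-theoretically compatible with the section $s$ cutting out $\tildeMP$ in $\tildeV$ and respects the open non-degeneracy conditions. The Cartesian squares in the Construction and the Notation encode this, but it should be verified carefully. One also has to fix $\tildeA$ relatively very ample enough, uniformly over the finitely many components, that all the $R^{1}$-terms above vanish, so that the complexes written down are genuinely complexes of vector bundles; and one must check — this is the point where the paper's generalised functoriality enters — that the relative dual obstruction theory $E^{\bullet}_{\BltildeMX/\BltildeMP}$, which fails to have amplitude $[0,1]$ over the ghost components, may nonetheless be used as the first term of the compatible triple. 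Everything else is the formal Behrend--Fantechi apparatus.
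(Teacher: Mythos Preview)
Your argument for (i) is essentially the paper's: the pullback of $\frN$ is a dual obstruction theory for $\BltildeMP\to\BltildeV$, and one then composes with $T_{\BltildeV/\tildefrPic}\to p_{\pp}^{*}T_{\tildeV/\tildefrPic}$ to obtain the stated two-term complex. The base-change concern you raise is handled by the Cartesian square in the Construction and is not a real obstacle.

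For (ii) your route and the paper's diverge. You build $E^{\bullet}_{\BltildeMX/\tildefrPic}$ as the middle term of a compatible triple with outer pieces $p_X^{*}R^{\bullet}\pi_{*}f^{*}N$ and $E^{\bullet}_{\BltildeMP/\tildefrPic}$, then invoke ``functoriality'' to conclude it is a dual obstruction theory. You correctly flag the weak point: the $N$-piece is \emph{not} perfect on the ghost components, so the standard Kim--Kresch--Pantev/Manolache functoriality does not apply, and ``the paper's generalised functoriality'' is not available yet (indeed, developing a workaround for exactly this failure is the main content of the later sections). So as written, this step is a genuine gap in your argument.

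The paper avoids the gap entirely by not going through a compatible triple. It argues directly, just as in (i): the known dual obstruction theory $R^{\bullet}\pi_{*}f^{*}F$ for $\tildeMX\to\tildefrPic$ is already represented by
\[
\bigl[T_{\tildeV/\tildefrPic}\to \pi_{*}L(\tildeA)|_{\tildeA}^{\oplus(r+1)}\oplus\pi_{*}f^{*}N(\tildeA)\to\pi_{*}f^{*}N(\tildeA)|_{\tildeA}\bigr],
\]
so the degree-$[1,2]$ piece (equivalently, its kernel) is a dual obstruction theory for $\tildeMX\to\tildeV$, hence for $\BltildeMX\to\BltildeV$ by pullback. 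One then composes with $T_{\BltildeV/\tildefrPic}\to p^{*}T_{\tildeV/\tildefrPic}$ exactly as in (i). Perfectness is checked by observing that $\pi_{*}L(\tildeA)|_{\tildeA}^{\oplus(r+1)}\to\pi_{*}f^{*}N(\tildeA)|_{\tildeA}$ is surjective (your $R^{2}\pi_{*}f^{*}F=0$ is the same fact). This bypasses the non-perfectness of $R^{\bullet}\pi_{*}f^{*}N$ completely: that complex never appears.

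In short: your mapping-cone identification of the three-term complex and your perfectness check are fine, but the obstruction-theory property should be obtained by the direct argument above rather than via a compatible triple whose first term fails to be perfect.
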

\begin{proof} The vector bundle $\pi_* L(\tildeA)|^{\oplus(r+1)}_{\tildeA}$ is a dual obstruction theory for the embedding  $\tildeMP\to \tildeV$ and therefore the vector bundle $ p_{\pp}^* \pi_* L(\tildeA)|^{\oplus(r+1)}_{\tildeA} $ is a perfect dual obstruction theory for $\BltildeMP\to \BltildeV$.  The composition $$T_{\BltildeV/\tildefrPic}\to p_{\pp}^*T_{\tildeV/\tildefrPic}\to p_{\pp}^* \pi_* L(\tildeA)|^{\oplus(r+1)}_{\tildeA}$$ gives a complex $[T_{\BltildeV/\tildefrPic}\to p_{\pp}^* \pi_* L(\tildeA)|^{\oplus(r+1)}_{\tildeA}]$.   This proves (i). 

The complex in (ii) is supported in $[0,2]$, but the surjectivity of the map $\pi_* L(\tildeA)|^{\oplus(r+1)}_{\tildeA}\to \pi_* f^*N(\tildeA)|_{\tildeA}$ implies that it is in fact supported in $[0,1]$.  Now argue as in (i).
\end{proof}

Proposition~\ref{definevirtualclasses} defines virtual classes on $\BltildeMP$ and $\BltildeMX$.

\begin{lemma} \label{pushforwards} We have that $$(p_{\pp})_*[\BltildeMP]^{\vv}=[\tildeMP]^{\vv}$$ and  $$(p_X)_*[\BltildeMX]^{\vv}=[\tildeMX]^{\vv}.$$
\end{lemma}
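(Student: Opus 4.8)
The plan is to deduce both equalities from Costello's push-forward theorem~\cite{costello}, exactly as was done for the map $s \colon \tildeMP \to M(\pp)$ earlier in the text. The key point is that $p_{\pp} \colon \BltildeMP \to \tildeMP$ is proper — indeed it is the base change along $\tildeMP \to \tildeV$ of the blow-up $p_{\tildeV} \colon \BltildeV \to \tildeV$, which is projective, so $p_{\pp}$ is projective and in particular proper. Likewise $p_X \colon \BltildeMX \to \tildeMX$ is proper, being the base change of $p_{\pp}$ along $\tildeMX \to \tildeMP$. So the only real content is to check that the virtual classes in question are compatible with these proper maps in the sense required by Costello, i.e.\ that the relevant obstruction theories are pulled back from the base.

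First I would treat $\BltildeMP$. Both $\BltildeMP$ and $\tildeMP$ carry virtual classes relative to $\tildefrPic$: the class $[\tildeMP]^{\vv} = \tilde\nu^![\tildefrPic]$ comes from the perfect dual obstruction theory $\pi_* L(\tildeA)|_{\tildeA}^{\oplus(r+1)}$ for the embedding $\tildeMP \to \tildeV$ (equivalently, from the embedding $m_\sigma$), while $[\BltildeMP]^{\vv}$ comes from the perfect dual obstruction theory $E^\bullet_{\BltildeMP/\tildefrPic} = [T_{\BltildeV/\tildefrPic} \to p_{\pp}^* \pi_* L(\tildeA)|_{\tildeA}^{\oplus(r+1)}]$ of Proposition~\ref{definevirtualclasses}(i). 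The relative obstruction theory of $\BltildeMP$ over $\tildeMP$ is $T_{\BltildeV/\tildeV}$ (the tangent complex of the blow-up $\BltildeV \to \tildeV$ restricted to the exceptional divisor), and one checks that $E^\bullet_{\BltildeMP/\tildefrPic}$, $p_{\pp}^* E^\bullet_{\tildeMP/\tildefrPic}$, and this relative theory fit into a compatible triple of distinguished triangles. Costello's theorem then gives $(p_{\pp})_* [\BltildeMP]^{\vv} = \deg(p_{\pp}) \cdot [\tildeMP]^{\vv}$, and since $p_{\pp}$ is birational onto $\tildeMP$ (it is an isomorphism away from the centres of the blow-up, and $\tildeMP$ is the exceptional divisor which maps onto itself with degree one generically) the degree is $1$. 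Concretely, one can instead invoke the formula $[\BltildeMP]^{\vv} = m'^![\BltildeV]$ for the appropriate regular embedding and the commutativity of refined Gysin maps with proper push-forward against the blow-down $\BltildeV \to \tildeV$.

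For $\BltildeMX$ the argument is identical in structure. Now $[\tildeMX]^{\vv}$ is defined by the perfect dual obstruction theory $R^\bullet \pi_* f^* F$ relative to $\tildefrPic$ (equivalently $m_\sigma^![\tildeV]$ composed with the $N$-direction), and $[\BltildeMX]^{\vv}$ is defined by $E^\bullet_{\BltildeMX/\tildefrPic}$ of Proposition~\ref{definevirtualclasses}(ii), which after the truncation noted there is perfect and restricts on each component to the pullback of $E^\bullet_{\tildeMX/\tildefrPic}$ twisted by $T_{\BltildeV/\tildeV}$. Again the relative obstruction theory of $\BltildeMX$ over $\tildeMX$ is the pullback of $T_{\BltildeV/\tildeV}$ and fits in the compatible triple with $p_X^* E^\bullet_{\tildeMX/\tildefrPic}$, so Costello's push-forward theorem applies; $p_X$ is proper and generically of degree one, giving $(p_X)_* [\BltildeMX]^{\vv} = [\tildeMX]^{\vv}$. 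Alternatively one reduces the second identity to the first by the Cartesian square $\BltildeMX = \tildeMX \times_{\tildeMP} \BltildeMP$: functoriality of $j_{N}^!$ (or $j^!$ for the embedding $X \hookrightarrow \pp^r$) shows $[\BltildeMX]^{\vv} = j^![\BltildeMP]^{\vv}$ and $[\tildeMX]^{\vv} = j^![\tildeMP]^{\vv}$, and then $(p_X)_* j^! = j^! (p_{\pp})_*$ by commutativity of Gysin pull-back with proper push-forward in the fibre square, reducing everything to the $\BltildeMP$ case.

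The main obstacle I anticipate is the bookkeeping around the components: $\BltildeMP$ and $\BltildeMX$ are blow-ups along the \emph{union} of components (main plus ghosts), taken one at a time, so the exceptional divisor $\BltildeMP$ has several components $\BltildeMP^\lambda$ of possibly different behaviour, and one must make sure the obstruction theory $E^\bullet_{\BltildeMP/\tildefrPic}$ restricts correctly on each and that the "generic degree one" claim holds component-by-component (each $\BltildeMP^\lambda \to \tildeMP^\lambda$ is birational). Once one verifies that the obstruction theory $E^\bullet_{\BltildeMP/\tildefrPic}$ is genuinely pulled back from $E^\bullet_{\tildeMP/\tildefrPic}$ up to the blow-up correction — which is essentially immediate from its definition as $[T_{\BltildeV/\tildefrPic} \to p_{\pp}^*(\text{same bundle})]$ and the exact triangle $T_{\BltildeV/\tildeV} \to T_{\BltildeV/\tildefrPic} \to p_{\tildeV}^* T_{\tildeV/\tildefrPic}$ — Costello's theorem does the rest with no further work.
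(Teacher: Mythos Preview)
Your proposal is correct, but the paper's proof is literally the single sentence ``Pull-backs commute with push-forwards.''  This is precisely the alternative you flag at the end of each paragraph: from the Cartesian square
\[
\xymatrix{\BltildeMP\ar[r]\ar[d]_{p_{\pp}}&\BltildeV\ar[d]^{p_{\tildeV}}\\ \tildeMP\ar[r]^{m_\sigma}&\tildeV}
\]
one has $[\BltildeMP]^{\vv}=m_\sigma^![\BltildeV]$ and $[\tildeMP]^{\vv}=m_\sigma^![\tildeV]$, so
\[
(p_{\pp})_*[\BltildeMP]^{\vv}=(p_{\pp})_*m_\sigma^![\BltildeV]=m_\sigma^!(p_{\tildeV})_*[\BltildeV]=m_\sigma^![\tildeV]=[\tildeMP]^{\vv},
\]
using that $p_{\tildeV}$ is a birational map of smooth stacks; and similarly for $X$.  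Your main route through Costello's theorem and compatible triples is valid but considerably heavier than what is needed here---the relevant Gysin maps are already refined intersections with a section of a vector bundle on a smooth ambient space, so Fulton-style commutativity of $m_\sigma^!$ with proper push-forward suffices without invoking the full virtual machinery.  Since you already identified this shortcut, there is no gap; just promote the alternative to the main argument.
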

\begin{proof} Pull-backs commute with push-forwards.
\end{proof}
\begin{lemma} \label{bl to tilde}We have
\[
(p_{\pp})_*[\BltildeMP]^{\vv}=[\tildeMP^0]+(p_{\pp})_*\sum_{\lambda \in I}0_{\frE_{\BltildeMP/\tildefrPic}}^![C^{\lambda}_{\BltildeMP/\tildefrPic}]
\]
where $\frE_{\BltildeMP/\tildefrPic}$ is the vector bundle stack $h^1/h^0(E^\bullet_{\BltildeMP/\tildefrPic})$.
\end{lemma}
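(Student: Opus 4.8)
The statement to prove is
\[
(p_{\pp})_*[\BltildeMP]^{\vv}=[\tildeMP^0]+(p_{\pp})_*\sum_{\lambda \in I}0_{\frE_{\BltildeMP/\tildefrPic}}^![C^{\lambda}_{\BltildeMP/\tildefrPic}].
\]
My plan is to unwind the definition of $[\BltildeMP]^{\vv}$ from Proposition~\ref{definevirtualclasses}(i) and then use the decomposition of $\BltildeMP$ into the components $\BltildeMP^\lambda$, $\lambda \in \{0\}\cup I$, which induces a decomposition of the intrinsic normal cone, hence of the cone $C_{\BltildeMP/\tildefrPic}$ inside the vector bundle stack $\frE_{\BltildeMP/\tildefrPic}$. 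By definition,
\[
[\BltildeMP]^{\vv}=0_{\frE_{\BltildeMP/\tildefrPic}}^![C_{\BltildeMP/\tildefrPic}] = 0_{\frE_{\BltildeMP/\tildefrPic}}^!\left[C^{0}_{\BltildeMP/\tildefrPic}\right] + \sum_{\lambda\in I} 0_{\frE_{\BltildeMP/\tildefrPic}}^!\left[C^{\lambda}_{\BltildeMP/\tildefrPic}\right],
\]
where $C^{\lambda}_{\BltildeMP/\tildefrPic}$ is the part of the cone supported on $\BltildeMP^\lambda$. Applying $(p_{\pp})_*$ and comparing with the claimed formula, the whole lemma reduces to the single identity
\[
(p_{\pp})_* \, 0_{\frE_{\BltildeMP/\tildefrPic}}^!\left[C^{0}_{\BltildeMP/\tildefrPic}\right] = [\tildeMP^0].
\]

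To establish this, I would argue as follows. On the main component $\BltildeMP^0$, the morphism $p_{\pp}\colon \BltildeMP^0 \to \tildeMP^0$ is (by the blow-up construction) a projective bundle, or at least a birational proper morphism onto the smooth stack $\tildeMP^0$ (Proposition~\ref{main_is_smooth}). Because $\tildeMP^0$ is smooth over $\tildefrPic$, the obstruction theory $E^\bullet_{\BltildeMP/\tildefrPic}$ restricted to $\BltildeMP^0$ has the property that the component $C^0_{\BltildeMP/\tildefrPic}$ of the cone is exactly the normal cone of the smooth embedding, so that $0^!_{\frE_{\BltildeMP/\tildefrPic}}[C^0_{\BltildeMP/\tildefrPic}] = [\BltildeMP^0]$ up to an explicit excess factor coming from the difference in ranks of the bundles over $\BltildeMP^0$. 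Then pushing forward $[\BltildeMP^0]$ along the projective-bundle map $p_{\pp}|_{\BltildeMP^0}\colon \BltildeMP^0 \to \tildeMP^0$ (using the projection formula and the fact that the fibers are projective spaces, whose fundamental class pushes to the point class in the relevant dimension) yields $[\tildeMP^0]$. The upshot is that the main-component contribution collapses to the fundamental class $[\tildeMP^0]$ exactly because that component is smooth of the expected dimension, so its virtual class is its ordinary fundamental class and the blow-up $p_{\pp}$ does not change it after push-forward.

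The main obstacle, and the step requiring the most care, is the compatibility of the Gysin pull-back $0^!_{\frE_{\BltildeMP/\tildefrPic}}$ with the decomposition of the cone into its pieces supported on the various $\BltildeMP^\lambda$, together with the precise bookkeeping of which bundle the Gysin map is taken against over each component. The vector bundle stack $\frE_{\BltildeMP/\tildefrPic} = h^1/h^0(E^\bullet_{\BltildeMP/\tildefrPic})$ has constant rank globally, but the cone $C_{\BltildeMP/\tildefrPic}$ meets different components in subcones of different dimensions; one must check that $0^!_{\frE}[\,\cdot\,]$ is additive over the irreducible pieces of the cone (which follows from additivity of refined Gysin maps over a decomposition of the cycle into its components, e.g.\ as in the construction of the virtual class in~\cite{bf}), and that on $\BltildeMP^0$ the excess-intersection contribution genuinely trivializes — this is where smoothness of $\tildeMP^0$ and the explicit form of $E^\bullet_{\BltildeMP/\tildefrPic}$ in Proposition~\ref{definevirtualclasses}(i) are used. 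I would handle this by reducing, via the deformation-to-the-normal-cone picture already set up in the paper, to the embedded situation $\tildeMP \hookrightarrow \tildeV$ and its blow-up $\BltildeMP = $ exceptional divisor in $\BltildeV$, where all the cones are honest normal cones of closed embeddings into smooth stacks and the additivity is the standard additivity of normal cones over the components of $\BltildeMP$.
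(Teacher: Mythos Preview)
Your reduction is correct and matches the paper exactly: both arguments decompose $[C_{\BltildeMP/\tildefrPic}]$ as $[C^0]+\sum_{\lambda\in I}[C^\lambda]$ and reduce the lemma to the single identity
\[
(p_{\pp})_* \, 0_{\frE_{\BltildeMP/\tildefrPic}}^!\bigl[C^{0}_{\BltildeMP/\tildefrPic}\bigr]=[\tildeMP^{0}].
\]

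Where you diverge from the paper is in how you establish this identity. You propose a direct excess--intersection computation: write $0^!_{\frE}[C^0]=\ctop(\Cok^0)\cap[\BltildeMP^0]$ and then integrate $\ctop(\Cok^0)$ over the fibers of $p_{\pp}|_{\BltildeMP^0}$. This can be made to work: over a generic point of $\tildeMP^0$ the map $\BltildeMP^0\to\tildeMP^0$ is the projectivized normal bundle $\pp(N_{\tildeMP^0/\tildeV})$, the cone $C^0$ is the tautological $O(-1)$ modulo $T_{\BltildeV/\tildefrPic}$, and $\Cok^0$ is the universal quotient bundle, whose top Chern class has fiber integral $1$. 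However, you do not actually carry this out, and your sentence ``pushing forward $[\BltildeMP^0]$ \dots\ yields $[\tildeMP^0]$'' is wrong as written: the fibers of $p_{\pp}|_{\BltildeMP^0}$ have positive dimension, so $(p_{\pp})_*[\BltildeMP^0]=0$; it is only after capping with $\ctop(\Cok^0)$ that the pushforward becomes $[\tildeMP^0]$, and this is the step that needs justification.

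The paper bypasses this computation entirely. It observes that $(p_{\pp})_*\,0^!_{\frE}[C^0]$ is supported on $\tildeMP^0$ and has dimension equal to $\dim\tildeMP^0$, hence equals $K[\tildeMP^0]$ for some $K\in\Q$. Then Lemma~\ref{pushforwards}, which gives $(p_{\pp})_*[\BltildeMP]^{\vv}=[\tildeMP]^{\vv}$, forces $K=1$ by comparing the coefficient of $[\tildeMP^0]$ on both sides (since $\tildeMP^0$ is smooth of the expected dimension, it appears with multiplicity $1$ in $[\tildeMP]^{\vv}$, and the ghost contributions are not supported on the generic point of $\tildeMP^0$). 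This is shorter and makes essential use of the lemma immediately preceding the one you are proving.
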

\begin{proof} Since
\[
[\BltildeMP]^{\vv}=0_{\frE_{\BltildeMP/\tildefrPic}}^![C_{\BltildeMP/\tildefrPic}].
\]
we need to show that 
\begin{equation}
  \label{MP0 excess}
  (p_{\pp})_* 0_{\frE_{\BltildeMP/\tildefrPic}}^![C^{0}_{\BltildeMP/\tildefrPic}]=[\tildeMP^{0}].
\end{equation}
Since $C^{0}_{\BltildeMP/\tildefrPic}$ is the only component of $C_{\BltildeMP/\tildefrPic}$ supported on $\BltildeMP^0$ we have that
\[
(p_{\pp})_* 0_{\frE_{\BltildeMP/\tildefrPic}}^![C^{0}_{\BltildeMP/\tildefrPic}]=K[\tildeMP^{0}]
\]
for some $K\in\Q$. Lemma~\ref{pushforwards} gives 
\[
(p_{\pp})_*[\BltildeMP]^{\vv}=[\tildeMP]^{\vv}
\]
and therefore $K=1$.
\end{proof}

\section{A virtual class of the correct dimension}

Let $\lambda \in I$.  The virtual class $\left[\BltildeMP\right]^{\vv}$, when restricted to $\BltildeMP^\lambda$ and pulled back using $N^\lambda$, will have dimension different from the virtual dimension of $\BltildeMX$.  In this section we define a new virtual class $\left[\BltildeMP\right]_X^{\vv}$ on $\BltildeMP$, which restricts and pulls back to something of the correct dimension.  We will use this class (in \S\ref{sec:main theorem} below) to define virtual classes on each of the components $\BltildeMX^\lambda$.

\subsection{Reduced restricted obstruction theories on $\BltildeMXP^{\lambda}$}

\begin{lemma}
(i) On $\tildeMXP^{\lambda}$, we have a surjective morphism $$E_{\tildeMP/\tildefrPic}|_{\tildeMXP^{\lambda}}\to h^1(\pi_*f^*N)|_{\tildeMXP^{\lambda}}.$$

(ii) On $\BltildeMXP^{\lambda}$, we have a surjective morphism $$E_{\BltildeMP/\tildefrPic}|_{\BltildeMXP^{\lambda}}\to p_{X}^* h^1(\pi_*f^*N)|_{\BltildeMXP^{\lambda}}.$$

\end{lemma}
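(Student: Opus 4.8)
The plan is to prove (i) by a direct cohomological computation and then to deduce (ii) from it by pulling back along the blow-up $p_{\pp}\colon\BltildeMP\to\tildeMP$. For (i) I would first identify both sides. By the construction of \S\ref{cfk}, together with base change along the Cartesian square~\eqref{blowing up}, the dual obstruction theory $E_{\tildeMP/\tildefrPic}$ is quasi-isomorphic to the dual of $R^\bullet\pi_*\ev^*O(1)^{\oplus(r+1)}$, so its top cohomology sheaf, the obstruction sheaf, is $h^1(E_{\tildeMP/\tildefrPic})=R^1\pi_*\ev^*O(1)^{\oplus(r+1)}$. For the target I would use the surjection of vector bundles on $X$
\[
O_X(1)^{\oplus(r+1)}\twoheadrightarrow T_{\pp^r}|_X\twoheadrightarrow N
\]
obtained from the Euler sequence of $\pp^r$ and the normal bundle sequence of $X\hookrightarrow\pp^r$; let $F$ be its kernel. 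Over $\tildeMXP^\lambda$ the universal minimal genus-one subcurve $C_E$ of the universal curve is, by the very definition of $\tildeMXP^\lambda$, carried into $X$ by the universal map $\ev$, so that $\ev|_{C_E}$ factors through a morphism $x\colon\tildeMXP^\lambda\to X$; by Remark~\ref{MXP} the bundle $h^1(\pi_*f^*N)|_{\tildeMXP^\lambda}$ is $R^1(\pi|_{C_E})_*(\ev|_{C_E})^*N$, built from $C_E$ alone (this accounts for the extra $\codim X$ in Proposition~\ref{Nlambda}).

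To connect the two ends I would use that restriction of a coherent sheaf from the universal curve to the closed subscheme $C_E$ is surjective on $R^1\pi_*$, because $R^2\pi_*$ vanishes on the one-dimensional fibres; this gives a surjection $R^1\pi_*\ev^*O(1)^{\oplus(r+1)}|_{\tildeMXP^\lambda}\twoheadrightarrow R^1(\pi|_{C_E})_*(\ev|_{C_E})^*O(1)^{\oplus(r+1)}$. Pulling $0\to F\to O_X(1)^{\oplus(r+1)}\to N\to 0$ back along $\ev|_{C_E}$ and pushing it forward along $\pi|_{C_E}$, the vanishing of $R^2(\pi|_{C_E})_*$ yields a further surjection onto $R^1(\pi|_{C_E})_*(\ev|_{C_E})^*N=h^1(\pi_*f^*N)|_{\tildeMXP^\lambda}$. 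Composing these two surjections, and then composing with the canonical truncation $E_{\tildeMP/\tildefrPic}\to h^1(E_{\tildeMP/\tildefrPic})$, produces the required surjective morphism, which proves (i).

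For (ii) I would pull this morphism back along $p_{\pp}$. By Proposition~\ref{definevirtualclasses}(i), $E_{\BltildeMP/\tildefrPic}=[T_{\BltildeV/\tildefrPic}\to p_{\pp}^*\pi_*L(\tildeA)|^{\oplus(r+1)}_{\tildeA}]$, while $[T_{\tildeV/\tildefrPic}\to\pi_*L(\tildeA)|^{\oplus(r+1)}_{\tildeA}]$ represents $E_{\tildeMP/\tildefrPic}$. The derivative of the blow-up, $T_{\BltildeV/\tildefrPic}\to p_{\pp}^*T_{\tildeV/\tildefrPic}$, together with the identity on the degree-one term, is a morphism of complexes $E_{\BltildeMP/\tildefrPic}\to p_{\pp}^*E_{\tildeMP/\tildefrPic}$ which is surjective on $h^1$, since the image of $T_{\BltildeV/\tildefrPic}$ in $p_{\pp}^*\pi_*L(\tildeA)|^{\oplus(r+1)}_{\tildeA}$ lies inside the image of $p_{\pp}^*T_{\tildeV/\tildefrPic}$. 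Restricting this to $\BltildeMXP^\lambda=\tildeMXP^\lambda\times_{\tildeMP}\BltildeMP$, whose structural morphism to $\tildeMXP^\lambda$ is the restriction $p_X$ of $p_{\pp}$, and composing with $p_X^*$ of the surjection from (i), gives (ii).

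The step I expect to be the main obstacle is the bookkeeping in (i) over $\tildeMXP^\lambda$ rather than over $\tildeMX^\lambda$: since the universal map does not send the whole universal curve into $X$ there, one must be sure that all degree-one cohomology on both sides is concentrated on the genus-one subcurve $C_E$. This is so because on each rational tail everything in sight restricts to a globally generated bundle (on a rational tail $N$ is a quotient of $O(1)^{\oplus(r+1)}$), so the tails carry no $H^1$ and one is reduced to $C_E$, where $H^1$ of $(\ev|_{C_E})^*O(1)^{\oplus(r+1)}$ equals $R^1\pi_*\mathcal O_{C_E}\otimes x^*(\mathbb{C}^{r+1})$ and surjects onto $R^1\pi_*\mathcal O_{C_E}\otimes x^*N$. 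Once this localisation and the identification of $h^1(\pi_*f^*N)|_{\tildeMXP^\lambda}$ from Remark~\ref{MXP} are in hand, the surjectivity of the restriction maps, the truncation, and all of part (ii) are formal.
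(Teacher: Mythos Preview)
Your proposal is correct. For (i) you take essentially the same path as the paper---factor through the obstruction sheaf $R^1\pi_*L^{\oplus(r+1)}$ and then surject onto $R^1\pi_*f^*N$---but you are more explicit about why $f^*N$ even makes sense over $\tildeMXP^\lambda$, namely that all of the relevant $R^1$ is concentrated on the contracted elliptic subcurve $C_E$, where the universal map does land in $X$. The paper leaves this point implicit. (Your citation of Remark~\ref{MXP} is slightly misdirected: that remark extends $N^\lambda=h^0$, not $h^1$, to $\tildeMXP^\lambda$; but the localisation argument you actually give does the job.)

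For (ii) your argument is genuinely simpler than the paper's. The paper proves surjectivity on $h^1$ by two applications of the Snake Lemma, carefully tracking the cokernel $i_*Q$ of $T_{\BltildeV/\tildefrPic}\to p_{\tildeV}^*T_{\tildeV/\tildefrPic}$ and its restriction to the exceptional divisor. You observe instead that $E_{\BltildeMP/\tildefrPic}$ and $p_{\pp}^*E_{\tildeMP/\tildefrPic}$ share the same degree-one term $p_{\pp}^*\pi_*L(\tildeA)|_{\tildeA}^{\oplus(r+1)}$ and the comparison map is the identity there, so the induced map on cokernels---hence on $h^1$---is automatically surjective. Composing with $p_X^*$ of the surjection from (i), which remains surjective by right-exactness of pullback, then finishes. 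This is a cleaner route to the same conclusion; the paper's diagram-chase buys nothing extra here.
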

\begin{proof} We have that $E_{\tildeMP/\tildefrPic}=[T_{\tildeV/\tildefrPic}\to \pi_* L(\tildeA)|_{\tildeA}^{\oplus(r+1)}]$. From the direct sum of $r+1$ copies of the exact sequence
\[
0\to \pi_* L\to \pi_* L(\tildeA)\to \pi_* L(\tildeA)|_{\tildeA} \to R^1\pi_* L\to 0
\]
we get surjective morphisms 
\begin{equation}
  \label{eq:twoheadedarrows}
  \pi_* L(\tildeA)|_{\tildeA}^{\oplus(r+1)} \twoheadrightarrow R^1\pi_* L^{\oplus(r+1)}\twoheadrightarrow R^1\pi_*f^*N
\end{equation}
on $\tildeMXP$. Thus we get a morphism of two term complexes $$[T_{\tildeV/\tildefrPic}|_{\tildeMXP^{\lambda}}\to E_{\tildeMP/\tildeV}|_{\tildeMXP^{\lambda}}]\to [0\to  R^1\pi_*f^*N  ]$$ supported in $[0,1]$. This shows the existence of the morphism in (i); surjectivity follows from the surjectivity of \eqref{eq:twoheadedarrows}.

We have that $E_{\BltildeMP/\tildefrPic}=[T_{\BltildeV/\tildefrPic}\to p_{\tildeV}^* \pi_* L(\tildeA)|_{\tildeA}^{\oplus(r+1)}]$.  Thus $E_{\BltildeMP/\tildefrPic}$ is obtained by pulling back $[T_{\tildeV/\tildefrPic} \to E_{\tildeMP/\tildeV}]$ along $p_{\tildeV}$ and composing with $T_{\BltildeV/\tildefrPic} \to p_{\tildeV}^* T_{\tildeV/\tildefrPic}$.  Thus (i) implies the existence of the morphism in (ii).  It remains to prove surjectivity.  Consider the exact sequence
\[0\to T_{\BltildeV/\tildefrPic}\to p_{\tildeV/\tildefrPic}^* T_{\tildeV}\to i_*Q\to 0,
\]
where $Q$ is a sheaf on the exceptional divisor. Restricting to $\BltildeMP$ we get
\[0\to G\to T_{\BltildeV/\tildefrPic}|_{\BltildeMP}\to p_{\tildeV/\tildefrPic}^* T_{\tildeV}|_{\BltildeMP}\to i_*Q|_{\BltildeMP}\to 0.
\]
This gives two exact sequences
\[0\to G\to T_{\BltildeV/\tildefrPic}|_{\BltildeMP}\to H\to 0
\] and 
\[0\to H\to p_{\tildeV}^* T_{\tildeV/\tildefrPic}|_{\BltildeMP}\to i_*Q|_{\BltildeMP}\to 0.
\]
We now consider the commutative diagram 
\[\xymatrix{0\ar[r]&G\ar[r]\ar[d]& T_{\BltildeV/\tildefrPic}|_{\BltildeMP}\ar[r]\ar[d]&H\ar[r]\ar[d]&0\\
0\ar[r]&0\ar[r]&p_{\tildeV}^* E_{\tildeMP/\tildeV}|_{\BltildeMP}\ar[r]\ar[d]&p_{\tildeV}^* E_{\tildeMP/\tildeV}|_{\BltildeMP}\ar[r]\ar[d]&0\\
&&\widetilde{Obs}\ar[r]&F
}
\]
Here $\widetilde{Obs}$ is the cokernel of $T_{\BltildeV/\tildefrPic}|_{\BltildeMP}\to p_{\tildeV}^* E_{\tildeMP/\tildeV}|_{\BltildeMP}$ and $F$ is the cokernel of $H\to p_{\tildeV}^* E_{\tildeMP/\tildeV}|_{\BltildeMP}$. The Snake Lemma gives $F\simeq \widetilde{Obs}$.
Similarly, consider 
the commutative diagram 
\[\xymatrix{0\ar[r]&H\ar[r]\ar[d]& p_{\tildeV}^*T_{\tildeV/\tildefrPic}|_{\BltildeMP}\ar[r]\ar[d]& i_*Q|_{\BltildeMP}\ar[r]\ar[d]&0\\
0\ar[r]&p_{\tildeV}^* E_{\tildeMP/\tildeV}|_{\BltildeMP}\ar[r]\ar[d]&p_{\tildeV}^* E_{\tildeMP/\tildeV}|_{\BltildeMP}\ar[r]\ar[d]&0\ar[r]&0\\
&F\ar[r]&p_{\tildeV}^* R^1\pi_* L^{\oplus(r+1)}|_{\BltildeMP}
}
\]
The Snake Lemma gives that $F\to p_{\tildeV}^* R^1\pi_* L^{\oplus(r+1)}|_{\BltildeMP}$ is surjective. Since $F\simeq \widetilde{Obs}$ we get that the morphisms 
\[  p_{\tildeV}^* E_{\tildeMP/\tildeV}|_{\BltildeMP} \twoheadrightarrow \widetilde{Obs}\twoheadrightarrow p_{\tildeV}^* R^1\pi_* L^{\oplus(r+1)}|_{\BltildeMP}\]
are surjective.  Restricting to $\BltildeMXP$ and composing with the right-hand morphism in \eqref{eq:twoheadedarrows} proves (ii).
\end{proof}

\begin{lemma} \label{reduced obstruction} 

(i) Let $\lambda \in I$, let $E^{X,\lambda}_{\BltildeMP/\tildefrPic}$ denote the kernel of 
\[
  E_{\BltildeMP/\tildefrPic}|_{\BltildeMXP^{\lambda}}\to p_{X}^* h^1(\pi_*f^*N)
\]
and let $E^{X,\lambda}_{\BltildeMXP/\tildefrPic}$ denote the kernel of 
\[
  E_{\BltildeMXP/\tildefrPic}|_{\BltildeMXP^{\lambda}}\to p_{X}^* h^1(\pi_*f^*N)
\]
Then $E^{X,\lambda}_{\BltildeMP/\tildefrPic}$ is a vector bundle stack on $\BltildeMXP^{\lambda}$ that contains $C^{\lambda}_{\BltildeMP/\tildefrPic}|_{\BltildeMXP^{\lambda}}$.

(ii) Let $Q$ be a fixed node as in Remark~\ref{MXP}. Denote by $E^{X,\lambda}_{\BltildeMXP/\tildefrPic}$ the vector bundle stack $E^{X,\lambda}_{\BltildeMP/\tildefrPic}\oplus \pi_*f^*N|_Q$ and by $C^{X,\lambda}_{\BltildeMXP/\tildefrPic}$ the cone stack stack $C^{X,\lambda}_{\BltildeMP/\tildefrPic}\oplus \pi_*f^*N|_Q$. Then we have an embedding $$C^{X,\lambda}_{\BltildeMXP/\tildefrPic}\hookrightarrow E^{X,\lambda}_{\BltildeMP/\tildefrPic}\oplus \pi_*f^*N|_Q.$$
\end{lemma}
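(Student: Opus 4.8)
The plan is to deduce both parts from the previous lemma together with standard facts about cones, obstruction theories, and blow-ups. The key observation is that part~(ii) is essentially bookkeeping once part~(i) is established: adding the constant factor $\pi_*f^*N|_Q$ to everything in sight cannot destroy an embedding, so I will concentrate on~(i).

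For part~(i), first I would recall that the complex $E_{\BltildeMP/\tildefrPic}$ is perfect of amplitude $[0,1]$ (Proposition~\ref{definevirtualclasses}), hence the vector bundle stack $\frE_{\BltildeMP/\tildefrPic} = h^1/h^0(E_{\BltildeMP/\tildefrPic})$ is genuinely a vector bundle stack, and $C_{\BltildeMP/\tildefrPic}$ sits inside it. The surjection $E_{\BltildeMP/\tildefrPic}|_{\BltildeMXP^{\lambda}} \to p_X^* h^1(\pi_*f^*N)$ constructed in the previous lemma is a surjection of two-term complexes onto a sheaf placed in degree $1$; its kernel $E^{X,\lambda}_{\BltildeMP/\tildefrPic}$ is therefore again a perfect complex of amplitude $[0,1]$ — one checks this from the long exact cohomology sequence, using that the map is surjective on $h^1$ and an isomorphism on $h^0$ (the target has no $h^0$). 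Taking $h^1/h^0$ is exact for such triangles of perfect $[0,1]$-complexes, so we get a short exact sequence of vector bundle stacks
\[
0 \to \frE^{X,\lambda}_{\BltildeMP/\tildefrPic} \to \frE_{\BltildeMP/\tildefrPic}|_{\BltildeMXP^{\lambda}} \to p_X^* h^1(\pi_*f^*N) \to 0,
\]
which exhibits $\frE^{X,\lambda}_{\BltildeMP/\tildefrPic}$ as a vector bundle stack; I would keep writing it as $E^{X,\lambda}_{\BltildeMP/\tildefrPic}$ to match the statement.

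The substantive point is that the cone $C^{\lambda}_{\BltildeMP/\tildefrPic}|_{\BltildeMXP^{\lambda}}$ lands inside the subbundle stack $E^{X,\lambda}_{\BltildeMP/\tildefrPic}$, i.e.\ that the composite $C^{\lambda}_{\BltildeMP/\tildefrPic}|_{\BltildeMXP^{\lambda}} \hookrightarrow \frE_{\BltildeMP/\tildefrPic}|_{\BltildeMXP^{\lambda}} \to p_X^* h^1(\pi_*f^*N)$ is zero. I would prove this by interpreting the composite geometrically: $C^{\lambda}_{\BltildeMP/\tildefrPic}$ is a component of the normal cone of $\BltildeMP$ in its smooth ambient space $\BltildeV$, and over $\BltildeMXP^{\lambda}$ the surjection onto $p_X^* h^1(\pi_*f^*N) = p_X^*R^1\pi_* f^*N$ is exactly the obstruction-to-lifting-sections-to-$X$ map. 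The tautological fact is that a point of $\BltildeMXP^{\lambda}$ is a stable map whose components already lift to $X$ — the collapsed elliptic component and, by the convexity of $\bar{M}_{0,n}(\pp^r,d)$ invoked in Proposition~\ref{Nlambda}, the rational components carry no obstruction — so the image of the cone under this obstruction map vanishes identically. Concretely, the contraction-of-elliptic-tails argument of Proposition~\ref{Nlambda} shows that $R^1\pi_* f^*N$ restricted to $\BltildeMXP^{\lambda}$ is pulled back from the contracted curve, on which the relevant deformations are unobstructed; hence the normal directions parametrized by $C^{\lambda}_{\BltildeMP/\tildefrPic}$ map to $0$ there. This is the step I expect to be the main obstacle — everything else is a homological-algebra formality, but identifying the composite with the genuine obstruction map and then citing convexity/base-change to kill it requires care with how the blow-ups $\tildefrPic \to \frPic$ and $\BltildeV \to \tildeV$ interact with the universal curve.

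For part~(ii), once~(i) is in hand the argument is immediate: $\pi_*f^*N|_Q$ is a vector bundle on $\BltildeMXP^{\lambda}$ (it is the fiber at the fixed node of the locally free sheaf whose convexity was established in Remark~\ref{MXP}), so direct-summing it with both the cone stack and the ambient vector bundle stack of~(i) preserves the closed embedding; explicitly, the embedding of~(ii) is $\iota \oplus \mathrm{id}$ where $\iota$ is the embedding of~(i). I would also remark that the notation is consistent: $E^{X,\lambda}_{\BltildeMXP/\tildefrPic}$ as defined in the statement of~(ii) is indeed the vector bundle stack attached to the kernel $E^{X,\lambda}_{\BltildeMXP/\tildefrPic}$ of $E_{\BltildeMXP/\tildefrPic}|_{\BltildeMXP^{\lambda}} \to p_X^* h^1(\pi_*f^*N)$ appearing in the hypothesis, because the obstruction theory $E_{\BltildeMXP/\tildefrPic}$ differs from $E_{\BltildeMP/\tildefrPic}$ precisely by the factor $\pi_*f^*N|_Q$ recording the condition that the node maps to $X$ — this is where Remark~\ref{MXP} enters.
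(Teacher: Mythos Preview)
Your formal setup is fine: the kernel of a surjection from a perfect $[0,1]$-complex onto a vector bundle in degree~$1$ is again perfect in $[0,1]$, and part~(ii) is indeed immediate from~(i), exactly as the paper records.

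The gap is in your argument for the vanishing of the composite
\[
C^{\lambda}_{\BltildeMP/\tildefrPic}\big|_{\BltildeMXP^{\lambda}} \hookrightarrow \frE_{\BltildeMP/\tildefrPic}\big|_{\BltildeMXP^{\lambda}} \to p_X^* R^1\pi_* f^*N.
\]
Your justification is that points of $\BltildeMXP^{\lambda}$ ``already lift to $X$'' and that convexity kills obstructions on the rational tails. Neither holds. On $\BltildeMXP^{\lambda}$ only the \emph{collapsed elliptic component} is required to land in $X$; the rational tails are stable maps to $\pp^r$, so $f^*N$ is not even defined along them, and there is nothing to ``lift''. Convexity of $\bar{M}_{0,n}(\pp^r,d)$ concerns $T_{\pp^r}$, not $N_{X/\pp^r}$, and says nothing about the target here, which over $\BltildeMXP^{\lambda}$ is $f^*N|_Q \otimes \E^\vee$ coming entirely from the elliptic piece. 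So the obstruction-theoretic heuristic you sketch does not prove the vanishing.

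The paper's proof is substantially different and does real work at exactly this point. It restricts to an open set $U$ in the smooth locus of $\tildeMP^{\lambda}$ and uses the Hu--Li local charts to identify $C^{\lambda}_{\BltildeMP/\tildefrPic}$ there as a \emph{line bundle}: for $k(\lambda)=1$ it is $O(\xi)=T^E_Q\otimes T^R_Q$ (the node-smoothing direction), and for $k(\lambda)>1$ it is $p_{\pp}^*O(\Xi_k)$, the pullback of the exceptional divisor on $\tildefrPic$. The map from this line to $f^*N|_Q\otimes\E^\vee$ is then written out explicitly --- via the Ciocan-Fontanine--Kim embedding and the auxiliary divisor $A$ --- as factoring through the derivative $T_{Q_i}R_i\to f^*T_{\pp}|_{Q_i}$ followed by the projection $T_{\pp}|_Q\to N|_Q$, and the exact sequence $0\to T_X|_Q\to T_{\pp}|_Q\to N|_Q\to 0$ at the node $Q\in X$ is invoked to conclude. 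Finally, because the cone is a line bundle and $\BltildeMXP^{\lambda}$ is irreducible, vanishing on the dense open $Bl^X(U)$ forces vanishing everywhere. None of these ingredients --- the Hu--Li local identification of the cone, the explicit factorisation through the derivative at the node, the case split on $k(\lambda)$, or the line-bundle-plus-irreducibility argument to globalise --- is present in your sketch, and they are what actually carries the proof.
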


\begin{proof}
Let $U$ be an open subset of the smooth locus in $\tildeMP^{\lambda}$, and write 
\begin{align*}
  Bl(U)=p_{\pp}^{-1}U, && Bl^X(U) = \BltildeMXP\cap Bl(U).
\end{align*}
We first show the statement for $C^{\lambda}_{\BltildeMP/\tildefrPic}|_{Bl(U)}$, which is a vector bundle stack over $Bl(U)$. This amounts to showing that the morphism 
\[
C^{\lambda}_{\BltildeMP/\tildefrPic}|_{Bl^X(U)}\to p_{X}^*h^1(\pi_*f^*N)|_{Bl(U)}
\]
is zero.  Recall the construction of $\tildefrPic$ discussed just below equation~\ref{blowing up} and the definition of $k(\lambda)$ just above Proposition~\ref{main_is_smooth}.  Set $k = k(\lambda)$, so that the component $\tildeMP^{\lambda}$ sits over the locus $\Delta_k$ in $\frPic$.

Case 1: $k = 1$. We want to show that the composition
\[
  C^{\lambda}_{\BltildeMP/\tildefrPic}|_{Bl^X(U)}\to p_{\pp}^*C^{\lambda}_{\tildeMP/\tildefrPic}|_{Bl^X(U)}\to 
  p_{\pp}^* \left( \frac{E_{\tildeMP/\tildeV}}{T_{\tildeV/\tildefrPic}}\right)|_{Bl^X(U)}\to p_{X}^*h^1(N)|_{Bl^X(U)} 
\]
is zero. For this it is enough to show that 
\begin{equation}\label{zerocomp} C^{\lambda}_{\tildeMP/\tildefrPic}|_{\tildeMXP\cap U}\to \frac{E_{\tildeMP/\tildeV}}{T_{\tildeV/\tildefrPic}}|_{\tildeMXP\cap U}\to h^1(N)|_{\tildeMXP\cap U}
\end{equation}
is zero.
We have that 
\[
C^{\lambda}_{\tildeMP/\tildefrPic}\simeq \frac{C^{\lambda}_{\tildeMP/\pi_* L(\tildeA)^{\oplus (r+1)}}}{\pi_*f^* L(\tildeA)^{\oplus (r+1)}}.
\]
After shrinking $U$, if necessary, and passing to an \'{e}tale cover we may assume that $\tildeA=A_1+A$, where $A$ is a section of the universal curve over $\tildefrPic$, and that there exists some $i$ in $\{0,1,\ldots,r\}$ such that $u_i$ does not vanish on the contracted elliptic component of the curve.  Without loss of generality we take $i=0$.  Let $\tildeW$ be the total space of $\pi_*L(A)^{\oplus (r+1)}$ over $\tildefrPic$. Then the construction in \S\ref{cfk} gives a composition of embeddings
\[ 
U\to \tildeW\to \tildeV
\] 
With this notation, $C^{\lambda}_{U/\tildefrPic}$ is locally isomorphic to $\frac{C_{U/\tildeW}\oplus C_{\tildeW/\tildeV}}{T_{\tildeV/\tildefrPic}}$.  Let us describe $C_{U/\tildeW}$ on an open set; we will see in particular that $U$ is codimension two in $\tildeW$. Consider now the local charts introduced in \cite{huli}. Let $\frM^{div,d}$ be the Artin stack of pairs $(C,D)$ where $C$ is a nodal elliptic curve and $D \subset C$ is an effective divisor of degree~$d$; this was denoted in~\cite{huli} by $\frD_1^d$.  Let $\tildeW'$ be the total space $\oplus_{i=1}^r \pi_*L(A)$ over $\frM^{div,d}$ -- noting that the index $i$ runs from $1$ not $0$ -- and observe that $\tildeW$ is the total space of $\oplus_{i=1}^r \pi_*L(A)$ over $\frM^{div,d+1}$. We have embeddings 
\[\xymatrix {U\ar[r]& \tildeW'\ar[r]\ar[d]& \tildeW\ar[d]\\
&\frM^{div,d}\ar[r]&\frM^{div,d+1}
}
\]
where the square is Cartesian and the bottom horizontal map is given by $A$.
The normal bundle of $\frM^{div,d}$ in $\frM^{div,d+1}$ is isomorphic to $\pi_* O(A)|_A$.  The Euler sequence on $\pp^r$ implies that the following sequence is exact
\[0\to \pi_* O(A)|_A\to \pi_*L(A)|_A^{\oplus (r+1)}\to  \pi_* f^* T_{\pp}(A)|_A\to 0
\]
By further shrinking $U$ we may assume that this sequence splits on $U$; this shows that $\pi_* f^* T_{\pp}(A)|_A$ is a dual obstruction theory for $U\to \tildeW'$.

Let $O(\xi)$ be the line bundle over $\tildeMP^\lambda$ with fiber at a point $(C,f)$ the space $T^E_Q\otimes T^R_Q$, where $Q$ is the node of $C$ connecting the contracted genus one component $E$ of $C$ to the rational part $R$.  With this we have that $C_{U/\tildeW'}$ is isomorphic to $O(\xi)$. This implies that
\[
C_{U/\tildefrPic}\simeq\frac{O(\xi)\oplus\pi_*O(A)|_A\oplus \pi_*L(A_1)|_{A_1}^{\oplus (r+1)}}{ \pi_*L(A+A_1)^{\oplus (r+1)}}.
\]
With this, (\ref{zerocomp}) becomes
\begin{align*}
  \frac{O(\xi)\oplus\pi_*O(A)|_A\oplus\pi_* L(A_1)|_{A_1}^{\oplus (r+1)}}{\pi_*L(A+A_1)^{\oplus (r+1)}} &\to 
  \frac{\pi_*T_{\pp}(A)|_{A}\oplus\pi_*O(A)|_A\oplus \pi_* L(A_1)|_{A_1}^{\oplus (r+1)}}{\pi_*L(A+A_1)^{\oplus (r+1)}} \\
  &\to\pi_*f^*N(A)|_{A}.
\end{align*}
(This sequence lives on $\tildeMXP \cap U$.)  Using $\pi_*O(A)|_A\simeq \E^{\vee}$, where $\E$ is the Hodge bundle, we can rewrite (\ref{zerocomp}) as
\begin{align*}
  \frac{ T^R_Q\otimes \E^{\vee}\oplus \E^{\vee}\oplus\pi_* L(A_1)|_{A_1}^{\oplus (r+1)}}{\pi_*L(A+A_1)^{\oplus (r+1)}} &\to
  \frac{\pi_*T_{\pp}|_{Q}\otimes\E^{\vee}\oplus\E^\vee\oplus \pi_* L(A_1)|_{A_1}^{\oplus (r+1)}}{\pi_*L(A+A_1)^{\oplus (r+1)}} \\ 
  &\to\pi_*f^*N|_Q\otimes \E^{\vee}.
\end{align*}
(This sequence also lives on $\tildeMXP \cap U$.)  Since $Q\in X$ we have an exact sequence
\begin{equation*} 0\to f_R^* T_X|_Q\otimes \E^{\vee}\to f_R^*T_{\pp}|_Q\otimes \E^{\vee}\to f_R^* N|_Q\otimes \E^{\vee}\to 0
\end{equation*}
where $f_R$ is the restriction of $f$ to $R$.  This shows that the composition in (\ref{zerocomp}) is zero, as claimed.

Case 2: $k > 1$.    Let $\Xi_k$ denote the exceptional divisor of $\tildefrPic\to \frPic$ which maps to $\Delta_k$.  We have that
\[
C^{\lambda}_{\BltildeMP/\tildefrPic}|_{Bl(U)} = p_{\pp}^* \,O(\Xi_k).
\]
Arguing as in Case~1, but replacing $O(\xi)$ by $O(\Xi_k)$, we see that we need to show that the composition 
\begin{equation} \label{composition_case_2}
O(\Xi_k)\to \pi_* f^* T_{\pp}(A)|_A\to \pi_* f^* N(A)|_A
\end{equation}
is zero.  (This sequence lives on $\tildeMXP \cap U$.)  Choose co-ordinates
\[
(C; p_1,\ldots,p_n; L; u_0,\ldots,u_r; t_1,\ldots,t_k)
\]
on $U$, where $(C; p_1,\ldots,p_n; L; u_0,\ldots,u_r)$ are as in \S\ref{cfk} and $t_1,\ldots,t_k$ are projective co-ordinates on the normal bundle to $\Delta_i$ in $\frPic$.  The tautological sequence on the projectivised normal bundle starts with
\[
0 \to O(\Xi_k) \to \bigoplus_{i=1}^k \E^{\vee} \otimes T_{Q_i} R_i 
\]
where the rational component $R_i$ meets the contracted genus one component $E$ at $Q_i$.  Hu--Li have shown (in the statement and proof of \cite[Theorem~2.19]{huli}) that the morphism $O(\Xi_k)\to \pi_* f^* T_{\pp}(A)|_A$ in \eqref{composition_case_2} arises from the composition of the exact sequence above with the map
\[
\bigoplus_{i=1}^k T_{Q_i} R_i \to f^* T_{\pp} \big|_{Q_i}
\]
induced by the derivative of the section that defines $X$.  Note that the right-hand side here does not in fact depend on $i$: the restrictions of $f^* T_{\pp}$ to $Q_i$ coincide for $i = 1,2,\ldots,k$, since the elliptic component $E$ of the curve is collapsed by $f$.  It follows that the composition \eqref{composition_case_2} is zero.

Remainder of the argument: So far we have shown that the composition
\[ 
  C^{\lambda}_{\BltildeMP/\tildefrPic}|_{\BltildeMXP^{\lambda}}\to E_{\BltildeMP/\tildefrPic}|_{\BltildeMXP^{\lambda}}\to p_X^* h^1(\pi_*f^*N)
\] 
is zero on an open set $Bl(U)$.  Since $C^{\lambda}_{\BltildeMP/\tildefrPic}$ is a line bundle and $\BltildeMXP^{\lambda}$ is irreducible, this in fact shows that the composition is zero on all of $\BltildeMXP^{\lambda}$.  This proves the Lemma.

Statement (ii) is an immediate consequence of (i).
\end{proof}

\subsection{A new virtual class on $\BltildeMP$}

We are now in a position to define the new virtual class $\left[\BltildeMP\right]_X^{\vv}$ on $\BltildeMP$.

\begin{definition}
  Let 
  \begin{align*}
  \frE^0 & \text{ denote the vector bundle stack associated to $E^\bullet_{\BltildeMP/\tildefrPic}|_{\BltildeMP^0}$} \\
  \frE^{X,\lambda} & \text{ denote the vector bundle stack associated to the complex $E_{\BltildeMXP/\tildefrPic}^{X,\lambda}$} \\
    C^0 & \text{ denote the component of $C_{\BltildeMP/\tildefrPic}$ supported on $\BltildeMP^0$} \\
    C^{X,\lambda} & \text{ denote the component of $C^\lambda_{\BltildeMXP/\tildefrPic}$ supported on $\BltildeMXP^\lambda$} 
  \end{align*}
  Recall that $C^0$ is contained in $\frE^0$, and that $C^\lambda$ is contained in $\frE^{X,\lambda}$.  We define a virtual class on $\BltildeMP$ by:
  \begin{align*}
    & \left[\BltildeMP^0\right]^{\vv} = 0^!_{\frE^0} \left[C^0\right] \\
    & \left[\BltildeMP^\lambda\right]_X^{\vv} = 0^!_{\frE^{X,\lambda}} \left[C^{X,\lambda}\right] \\
    & \left[\BltildeMP\right]_X^{\vv} =  \left[\BltildeMP^0\right]^{\vv} + \sum_{\lambda \in I} \left[\BltildeMP^\lambda\right]_X^{\vv} \in A_* \left(\BltildeMP\right).
  \end{align*} 
\end{definition}

\begin{remark}
  In general, $[\BltildeMP]_X^{\vv}$ is not a pure-dimensional cycle.
\end{remark}

\begin{remark} \label{define Cok}
  Since the cones $C^0$ and $C^\lambda$ are vector bundle stacks, we can write the virtual class in terms of excess bundles:
  \begin{align*}
    & \left[\BltildeMP^0\right]^{\vv} = \ctop (\Cok^0) \cdot \left[\BltildeMP^0\right] \\
    & \left[\BltildeMP^\lambda\right]_X^{\vv} = \ctop(\Cok^{\lambda}) \cdot \left[\BltildeMXP^{\lambda}\right]
  \end{align*} 
  where $\Cok^0$ is the cokernel of $C^0 \to \frE^0$ and $\Cok^\lambda$, $\lambda \in I$, is the cokernel of $C^{X,\lambda} \to \frE^{X,\lambda}$.  Each $\Cok^\lambda$ is a vector bundle, $\lambda \in \{0\} \cup I$.
\end{remark}

\section{Deformations of obstruction theories and cone stacks}\label{deformations}

We now give a a construction inspired by Kim--Kresch--Pantev's proof of functoriality for virtual classes~\cite{kkp}. 

\begin{construction} \label{con:deformation}
  Consider an exact triangle of complexes on an algebraic stack which are supported in degrees [-1,0]
  \begin{equation}
    \label{eq:distinguished_triangle}
    E^{\bullet}\stackrel{\phi}{\rightarrow} F^{\bullet}\to G^{\bullet}\to E^{\bullet}[1].
  \end{equation}
  We construct $\mathcal{F}$ a flat family over $\pp^1$ defined as follows. Take the morphism $$g:E^{\bullet}\otimes O_{\pp^1}(-1)\to E^{\bullet}\oplus F^{\bullet}$$ with $g=(T\cdot id, U\cdot \phi)$. Here $T$ and $U$ are coordinates on $\pp^1$. Define $\mathcal{F}$ to be the vector bundle stack associated to the cokernel of $g$. The general fiber of $\mathcal{F}$ is isomorphic to $h^1/h^0(F^{\bullet})^{\vee}$ and the special fiber $\mathcal{F}^0$ is isomorphic to $h^1/h^0(E^{\bullet})\oplus h^1/h^0(G^{\bullet})$. We call $h^1/h^0(E^{\bullet})\oplus h^1/h^0(G^{\bullet})$ the \emph{limit} of $h^1/h^0(F^{\bullet})$ and we use the following notation
  \[
    h^1/h^0(F^{\bullet})^{\vee} \rightsquigarrow h^1/h^0(E^{\bullet})^{\vee}\oplus h^1/h^0(G^{\bullet})^{\vee}.
  \]
\end{construction}

\begin{proposition}
The family $\mathcal{F}$ is trivial over $\A^1\simeq\pp^1\backslash [0:1]$.
\end{proposition}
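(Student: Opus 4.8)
The plan is to unwind Construction~\ref{con:deformation} over the affine chart $\A^1 = \pp^1 \setminus [0:1]$ and exhibit an explicit trivialisation. On this chart the coordinate $U$ is invertible, so after rescaling we may take $U = 1$ and let $t = T/U$ be the affine coordinate. The morphism $g \colon E^\bullet \otimes O_{\pp^1}(-1) \to E^\bullet \oplus F^\bullet$ restricts, on this chart, to the map $E^\bullet \otimes O_{\A^1} \to (E^\bullet \oplus F^\bullet) \otimes O_{\A^1}$ given by $e \mapsto (t\cdot e, \phi(e))$. First I would observe that since $U$ (equivalently, the second component $\phi$) is a unit on this locus, the image of $g$ is the graph of the map $F^\bullet \otimes O_{\A^1} \to E^\bullet \otimes O_{\A^1}$, $x \mapsto t\cdot(\phi^{-1}$-style correction$)$; more precisely, the cokernel of $g$ over $\A^1$ is computed by the change of basis sending $(e', x) \mapsto (e' - t\cdot\text{(lift of }x), x)$ on $E^\bullet \oplus F^\bullet$, which identifies $\Cok(g)|_{\A^1}$ with $F^\bullet \otimes O_{\A^1}$, independently of $t$.

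Concretely, I would argue as follows. Form the automorphism $\Psi$ of $(E^\bullet \oplus F^\bullet)\otimes O_{\A^1}$ which is the identity on the $F^\bullet$ factor and adds $-t\cdot s$ to the $E^\bullet$ factor, where $s \colon F^\bullet \to E^\bullet$ is... (there is no such section in general) — so instead I would work at the level of the cokernel directly: the composite $F^\bullet \otimes O_{\A^1} \hookrightarrow (E^\bullet \oplus F^\bullet)\otimes O_{\A^1} \twoheadrightarrow \Cok(g)|_{\A^1}$, using the inclusion into the second factor, is an isomorphism. Injectivity holds because if $(0,x)$ lies in the image of $g$, then $x = \phi(e)$ and $0 = t\cdot e$ with $t$ a unit, forcing $e = 0$ hence $x = 0$; surjectivity holds because any $(e', x)$ differs from $(0, x)$ by $g(e'/t) = (e', \phi(e')/t)$ up to the second coordinate — more carefully, $(e',x) - g(t^{-1}e') = (0, x - t^{-1}\phi(e'))$, which lies in the image of the second-factor inclusion. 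Passing to the associated vector bundle stacks $h^1/h^0(-)^\vee$, which is functorial, gives a trivialisation $\mathcal{F}|_{\A^1} \simeq h^1/h^0(F^\bullet)^\vee \times \A^1$.

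The only real subtlety — and the step I would flag as the main point to get right — is that $E^\bullet$, $F^\bullet$, $G^\bullet$ are two-term complexes, not sheaves, so all of the above must be interpreted in an appropriate derived or presentation-theoretic sense: $\Cok(g)$ means the three-term complex $[E^\bullet \otimes O(-1) \to E^\bullet \oplus F^\bullet]$ truncated appropriately, and "the inclusion of the second factor is a quasi-isomorphism onto the cokernel" must be checked to respect the complex structure. The cleanest route is to note that $g$ restricted to $\A^1$ is a termwise-split injection of complexes of vector bundles (split by the projection $e \mapsto t^{-1}$ composed with the first projection, using that $t$ is invertible), so its cokernel in the category of complexes is again a complex of vector bundles, quasi-isomorphic to $F^\bullet \otimes O_{\A^1}$ via the map constructed above; chain-homotopy invariance is not even needed since everything is an honest isomorphism of complexes after the change of coordinates. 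Then $h^1/h^0(-)^\vee$ of a trivial family of two-term complexes is a trivial family of vector bundle stacks, which is exactly the assertion.
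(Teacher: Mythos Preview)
You have the coordinates backwards. The point $[0{:}1]$ is where $T=0$, so on the chart $\A^1 \simeq \pp^1 \setminus [0{:}1]$ it is $T$ that is invertible, not $U$. Setting $U=1$ places you on the chart $\pp^1 \setminus [1{:}0]$ instead, and then your surjectivity step invokes $t^{-1}$ with $t=T/U$, which further requires $T\ne 0$. So the trivialisation you actually construct lives only on $\pp^1 \setminus \{[0{:}1],[1{:}0]\}$ and misses the point $[1{:}0]$, which is part of the $\A^1$ in the statement.

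The fix is immediate once you take the correct chart: set $T=1$, let $u=U$ be the affine coordinate, so $g(e)=(e,\,u\phi(e))$. Now the first component of $g$ is literally the identity on $E^\bullet$, so $g$ is a termwise-split injection for every $u$, and your second-factor inclusion $F^\bullet \hookrightarrow E^\bullet \oplus F^\bullet \twoheadrightarrow \Cok(g)$ is an isomorphism with no need to divide by anything. This is exactly what the paper does, phrased slightly differently: the paper composes with the shear automorphism $\psi(x,y)=(x,\,y-U\phi(x))$ of $E^\bullet \oplus F^\bullet$ to convert $g$ into $(T\cdot\mathrm{id},\,0)$, whose cokernel is visibly $F^\bullet$ wherever $T$ is a unit. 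Your direct check that the second factor surjects onto the cokernel is the same computation in disguise, and your worry about the complex-level subtlety evaporates once you notice that the splitting comes from the identity component rather than from inverting $t$.
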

\begin{proof} Consider the automorphism $$\psi: h^1/h^0(E^{\bullet})^{\vee}\oplus h^1/h^0(F^{\bullet})^{\vee}\to h^1/h^0(E^{\bullet})^{\vee}\oplus h^1/h^0(F^{\bullet})^{\vee}$$ defined by $\psi(x,y)=(x, y-U\cdot \phi(x))$. The cokernel of $g$ is isomorphic to the cokernel of $\psi\circ g: h^1/h^0(E^{\bullet})^{\vee}\to h^1/h^0(E^{\bullet})^{\vee}\oplus h^1/h^0(F^{\bullet})^{\vee}$. As $\psi \circ g$ is $(T\cdot id,0)$ we see that its cokernel is the trivial family $\A^1\times F$. The associated vector bundle stack is $\A^1\times h^1/h^0(F^{\bullet})^{\vee}$.
\end{proof}
\begin{construction}  \label{con:limiting_bundle}  In notation as above we have
\[h^1/h^0(F^{\bullet})^{\vee} \rightsquigarrow h^1/h^0(E^{\bullet})^{\vee}\oplus h^1/h^0(G^{\bullet})^{\vee}.
\]
Given $C$ a cone stack inside a vector bundle stack $h^1/h^0(F^{\bullet})^{\vee}$ and $E^{\bullet}$, $F^{\bullet}$ complexes as above. By the above proposition we can consider $C\times \A^1$ in $\A^1\times h^1/h^0(F^{\bullet})^{\vee}$. Let $\mathcal{C}$ be the closure of $C\times \A^1$ in $\mathcal{F}$. Then $\mathcal{C}$ is a flat family. We denote the fiber over $[0:1]$ by $C^0$ and we call it the limit of $C$ in $h^1/h^0(E^{\bullet})^{\vee}\oplus h^1/h^0(G^{\bullet})^{\vee}$, writing
\[C \rightsquigarrow C^0 .
\]
\end{construction}

\medskip

Consider now the special case of Construction~\ref{con:limiting_bundle} where $X\to Y\to Z$ are DM morphisms of stacks, $E^{\bullet}$ is a perfect obstruction theory for $X\to Y$, $F^{\bullet}$ is a perfect obstruction theory for $X\to Z$, $G^{\bullet}$ is a perfect obstruction theory for $Y\to Z$, and $C = C_{X/Z}$.
We thus obtain embeddings of cones
\[
C_{X/Z}\hookrightarrow h^1/h^0(F^{\bullet})^{\vee}
\]
and a special fiber
\[
C^0\hookrightarrow  h^1/h^0(E^{\bullet})^{\vee}\oplus h^1/h^0(G^{\bullet})^{\vee}.
\] 
In the fiber of $\mathcal{F}$ over zero we also have an embedding
\[
C_{X/C_{Y/Z}}\hookrightarrow  h^1/h^0(E^{\bullet})^{\vee}\oplus h^1/h^0(G^{\bullet})^{\vee}.
\] 
 Kim--Kresch--Pantev proved that $[C_0]=[C_{X/C_{Y/Z}}]$ in $A_*(\mathcal{F})$.  However, in general it is not true that the limit of $C_0$ is $C_{X/C_{Y/Z}}$. 

\subsection{A deformation of the obstruction theory on $\BltildeMX$}

Let us now apply the general theory just discussed to the obstruction theory $E_{\BltildeMX/\tildefrPic}$.  This will construct a deformation of the cone $C_{\BltildeMX/\tildefrPic}$.

\begin{notation} Recall that on $X$ we have a surjective morphism from $O(1)^{\oplus(r+1)}$ to $N$.  Let $F$ denote the kernel of this morphism.
\end{notation}
\begin{lemma} \label{lem:N}
  Let $N^0$ and $N^\lambda$, $\lambda \in I$, be as in Remark~\ref{MXP}.  Let $(N^0)^{\bullet}$ be the complex $[0\to N^0]$ supported in $[0,1]$ and $(N^{\lambda})^{\bullet}$ be the complex $[0\to N^{\lambda}]$ supported in $[0,1]$. Then, we have morphisms of complexes on $\tildeMX^0$ and $\tildeMXP^\lambda$:
\begin{align}
  \label{nzerolambda downstairs}(N^0)^{\bullet}\to R^\bullet \pi_* f^* F && (N^{\lambda})^{\bullet}\to R^\bullet \pi_* f^* F
\end{align}
and morphisms of complexes on $\BltildeMX^0$ and $\BltildeMXP^\lambda$:
\begin{align}
  \label{nzerolambda upstairs}p_{\pp}^* (N^0)^{\bullet}\to \BlF^\bullet && p_{\pp}^* (N^{\lambda})^{\bullet}\to  \BlF^\bullet
\end{align}
where $\BlF^\bullet$ is the complex
\[
[T_{\BltildeV/\tildefrPic} \to p_{\pp}^* \left(\pi_*L(\tilde{A})|_{\tilde{A}}^{\oplus (r+1)}\oplus \pi_*f^*N(\tilde{A}) \right)\to p_{\pp}^*\pi_*f^*N(\tilde{A})|_{\tilde{A}}]
\]
supported in $[0,2]$.
\end{lemma}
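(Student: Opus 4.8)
The plan is to produce both families of chain maps from a single source: the connecting morphism obtained by applying $R\pi_*$ to the defining sequence of $F$, precomposed with the inclusions $N^0\hookrightarrow \pi_*f^*N$ and $N^{\lambda}\hookrightarrow \pi_*f^*N$ furnished by Proposition~\ref{Nlambda} and Remark~\ref{MXP}.

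First I would work over $\tildeMX$ and, for $\lambda\in I$, over $\tildeMX^{\lambda}$; the versions over $\tildeMXP^{\lambda}$ then follow in the same way, reading the sheaves and the connecting morphism via the extensions of Remark~\ref{MXP} and using that the collapsed genus-one component maps into $X$. Over these spaces the universal map $f$ factors through $X$, so pulling the exact sequence $0\to F\to O_X(1)^{\oplus(r+1)}\to N\to 0$ back along $f$ gives an exact sequence $0\to f^*F\to f^*O(1)^{\oplus(r+1)}\to f^*N\to 0$ on the universal curve $C$. Applying $R\pi_*$ produces a distinguished triangle
\[
R\pi_*f^*F\to R\pi_*f^*O(1)^{\oplus(r+1)}\xrightarrow{\ \psi\ } R\pi_*f^*N\to R\pi_*f^*F[1],
\]
hence a connecting morphism $(R\pi_*f^*N)[-1]\to R\pi_*f^*F$. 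The observation that makes this explicit is that a section of $f^*N$, regarded inside $f^*N(\tildeA)$, vanishes along $\tildeA$, so that $\pi_*f^*N=\ker\bigl(\pi_*f^*N(\tildeA)\to \pi_*f^*N(\tildeA)|_{\tildeA}\bigr)$. Using the very ample twist $\tildeA$, so that $R^1\pi_*L(\tildeA)=0$ and $R^1\pi_*f^*N(\tildeA)=0$, I would represent $R\pi_*f^*O(1)^{\oplus(r+1)}$ by $[\,T_{\tildeV/\tildefrPic}|_{\tildeMX}\to \pi_*L(\tildeA)|_{\tildeA}^{\oplus(r+1)}\,]$ and $R\pi_*f^*N$ by $[\,\pi_*f^*N(\tildeA)\to \pi_*f^*N(\tildeA)|_{\tildeA}\,]$; then $R\pi_*f^*F$ is computed by the $(-1)$-shifted mapping cone of $\psi$, namely
\[
\bigl[\,T_{\tildeV/\tildefrPic}|_{\tildeMX}\to \pi_*L(\tildeA)|_{\tildeA}^{\oplus(r+1)}\oplus \pi_*f^*N(\tildeA)\to \pi_*f^*N(\tildeA)|_{\tildeA}\,\bigr],
\]
which is the downstairs analogue of $\BlF^\bullet$. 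The cone construction equips this complex with a tautological chain map from $(R\pi_*f^*N)[-1]$, realising the connecting morphism and given in degree one by $b\mapsto(0,b)$. Composing it with the chain map $(N^0)^{\bullet}=[0\to N^0]\to (R\pi_*f^*N)[-1]$ induced by $N^0\hookrightarrow \pi_*f^*N=\ker\bigl(\pi_*f^*N(\tildeA)\to \pi_*f^*N(\tildeA)|_{\tildeA}\bigr)$ gives a chain map $(N^0)^{\bullet}\to R\pi_*f^*F$ whose only nonzero component is $s\mapsto(0,s)$ in degree one. This proves part~(i), and the same construction with $N^{\lambda}$ in place of $N^0$ gives the map for $(N^{\lambda})^{\bullet}$.

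For part~(ii) I would pull the whole construction back along the blow-down $p_{\pp}$. Since the chain map above is zero in degree zero, the fact that $\BlF^\bullet$ carries $T_{\BltildeV/\tildefrPic}$ in degree zero rather than $p_{\pp}^*T_{\tildeV/\tildefrPic}$ is harmless: one uses the natural map $T_{\BltildeV/\tildefrPic}\to p_{\pp}^*T_{\tildeV/\tildefrPic}$ to identify the differential $\BlF^0\to\BlF^1$, and the map $p_{\pp}^*(N^0)^{\bullet}\to\BlF^\bullet$ is again $s\mapsto(0,s)$ in degree one and zero elsewhere. The only chain-map condition to verify is in the top degree: the image of $p_{\pp}^*N^0$ in $\BlF^1=p_{\pp}^*\bigl(\pi_*L(\tildeA)|_{\tildeA}^{\oplus(r+1)}\oplus \pi_*f^*N(\tildeA)\bigr)$ must be annihilated by $\BlF^1\to\BlF^2$, and this holds because that image lies in $p_{\pp}^*\ker\bigl(\pi_*f^*N(\tildeA)\to \pi_*f^*N(\tildeA)|_{\tildeA}\bigr)$.

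The step that I expect to require the most care is the bookkeeping of differentials: checking that $\BlF^\bullet$ (equivalently, $E^{\bullet}_{\BltildeMX/\tildefrPic}$ of Proposition~\ref{definevirtualclasses}) really is $\mathrm{Cone}\bigl(R\pi_*f^*O(1)^{\oplus(r+1)}\to R\pi_*f^*N\bigr)[-1]$, pulled back to the blow-up and with $T_{\BltildeV/\tildefrPic}$ replacing $T_{\tildeV/\tildefrPic}$, so that the second component of the differential $\BlF^1\to\BlF^2$ is the restriction map $\pi_*f^*N(\tildeA)\to \pi_*f^*N(\tildeA)|_{\tildeA}$. Once this identification is in hand, all the chain-map conditions are immediate from the kernel description of $\pi_*f^*N$. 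A secondary point, already flagged above, is to make sure that over $\tildeMXP^{\lambda}$ rather than just $\tildeMX^{\lambda}$ the relevant sheaves and the connecting morphism are interpreted consistently with Remark~\ref{MXP}.
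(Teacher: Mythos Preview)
Your proof is correct and follows essentially the same route as the paper. The paper replaces $R\pi_*f^*F$ by the explicit three-term complex $[\pi_*L(\tilde A)^{\oplus(r+1)}\to \pi_*L(\tilde A)|_{\tilde A}^{\oplus(r+1)}\oplus\pi_*f^*N(\tilde A)\to\pi_*f^*N(\tilde A)|_{\tilde A}]$ and then observes that multiplication by $\sigma$ gives a map $N^0\to\pi_*f^*N(\tilde A)$ landing in the kernel of restriction to $\tilde A$; your packaging of the same map as the composite of $N^\lambda\hookrightarrow\pi_*f^*N$ with the connecting morphism of the triangle for $0\to F\to O(1)^{\oplus(r+1)}\to N\to 0$ is just a rephrasing of this (note $T_{\tilde V/\tildefrPic}|_{\tildeMX}=\pi_*L(\tilde A)^{\oplus(r+1)}$, so your degree-zero term agrees with the paper's). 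Your treatment of the blow-up case is in fact a bit more careful than the paper's one-line ``the arguments are the same'': you correctly note that $\BlF^\bullet$ carries $T_{\BltildeV/\tildefrPic}$ rather than $p_{\pp}^*T_{\tilde V/\tildefrPic}$ in degree zero, and that this is irrelevant because the chain map vanishes there.
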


\begin{remark}
  As we will see in the proof, $\BlF^\bullet$ is in fact quasi-isomorphic to a complex supported in $[0,1]$.
\end{remark}

\begin{proof} We replace $R\pi_* f^* F$ with the quasi-isomorphic complex $$[\pi_*L(\tilde{A})^{\oplus (r+1)}\to \pi_*L(\tilde{A})|_{\tilde{A}}^{\oplus (r+1)}\oplus \pi_*f^*N(\tilde{A})\to \pi_*f^*N(\tilde{A})|_{\tilde{A}}]$$ supported in $[0,2]$. The morphism $N^0\to \pi_*f^*N(\tilde{A})$ given by multiplication by $\sigma$ induces a morphism 
\[\xymatrix{0\ar[r]\ar[d]&\pi_*L(\tilde{A})^{\oplus (r+1)}\ar[d]\\
N^0\ar[r] & \pi_*L(\tilde{A})|_{\tilde{A}}^{\oplus (r+1)}\oplus \pi_*f^*N(\tilde{A})\ar[d]\\
& \pi_*f^*N(\tilde{A})|_{\tilde{A}}
}
\]
The arguments for $N^{\lambda}$, $p_{\pp}^* (N^0)^{\bullet}$, and $p_{\pp}^* (N^\lambda)^{\bullet}$ are the same.
\end{proof}
\begin{lemma} \label{lem:G}
Let $(G^0)^{\bullet}$ and $(G^{\lambda})^{\bullet}$ be the mapping cones of the morphisms in \eqref{nzerolambda downstairs}, and let $\BlGzero^{\bullet}$ and $\BlGlambda^{\bullet}$ be the mapping cones of the morphisms in \eqref{nzerolambda upstairs}. Then $(G^0)^{\bullet}$, $(G^{\lambda})^{\bullet}$, $\BlGzero^{\bullet}$, and $\BlGlambda^{\bullet}$ are perfect complexes concentrated in $[0,1]$.
\end{lemma}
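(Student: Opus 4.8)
The statement follows from the general behaviour of mapping cones together with the explicit descriptions obtained in the previous two lemmas. First I would recall that a mapping cone of a morphism between complexes which are each perfect and concentrated in some bounded range of degrees is again perfect, and that its amplitude is controlled by the amplitudes of the source and target: concretely, if $u\colon A^{\bullet}\to B^{\bullet}$ with $A^{\bullet}$ supported in degrees $[a,a]$ (a single term, here degree $1$, since $(N^0)^{\bullet}$ and $(N^{\lambda})^{\bullet}$ are the one-term complexes $[0\to N^\bullet]$ with $N^\bullet$ in degree $1$) and $B^{\bullet}$ supported in $[0,2]$, then $\mathrm{Cone}(u)$ is a priori supported in $[0,2]$. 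So the only thing to prove is that the cohomology in degree $2$ vanishes, i.e.\ that the induced map on $h^2$ is an isomorphism (equivalently, that $u$ is surjective on the top cohomology sheaf). This is exactly where the content of Lemma \ref{lem:N} enters.

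Concretely, for $(G^0)^\bullet$ and $(G^\lambda)^\bullet$ I would unwind the morphism constructed in the proof of Lemma \ref{lem:N}: there $R^\bullet\pi_* f^*F$ was replaced by the explicit three-term complex
\[
[\pi_*L(\tilde A)^{\oplus(r+1)}\to \pi_*L(\tilde A)|_{\tilde A}^{\oplus(r+1)}\oplus \pi_*f^*N(\tilde A)\to \pi_*f^*N(\tilde A)|_{\tilde A}],
\]
and the map from $(N^0)^\bullet$ (resp.\ $(N^\lambda)^\bullet$) placed $N^0$ (resp.\ $N^\lambda$) in degree $1$ mapping via multiplication by $\sigma$ into the middle term, with the degree-$2$ terms matching identically (both sides have $\pi_*f^*N(\tilde A)|_{\tilde A}$ in degree $2$, and the component of the chain map there is the identity). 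Taking the mapping cone therefore cancels the degree-$2$ term on the nose and produces a complex supported in $[0,1]$ — in fact quasi-isomorphic to
\[
[\pi_*L(\tilde A)^{\oplus(r+1)}\xrightarrow{\ } N^{0}\oplus\bigl(\pi_*L(\tilde A)|_{\tilde A}^{\oplus(r+1)}\oplus\pi_*f^*N(\tilde A)\bigr)\big/\pi_*f^*N(\tilde A)|_{\tilde A}],
\]
after one checks that the relevant surjectivity (the cokernel of the degree-$1$ to degree-$2$ map of the three-term complex, which is $R^1\pi_*(\,\cdot\,)$-type vanishing, cf.\ the surjectivity of \eqref{eq:twoheadedarrows}) makes the quotient locally free. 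Each term is a vector bundle, so $(G^0)^\bullet$ and $(G^\lambda)^\bullet$ are perfect and concentrated in $[0,1]$. The argument for $\BlGzero^\bullet$ and $\BlGlambda^\bullet$ is identical: one uses the three-term model $\BlF^\bullet$ of Lemma \ref{lem:N}, whose degree-$2$ term $p_{\pp}^*\pi_*f^*N(\tilde A)|_{\tilde A}$ is again matched identically by the chain map from $p_{\pp}^*(N^{0})^\bullet$ (resp.\ $p_{\pp}^*(N^\lambda)^\bullet$), so that the cone cancels degree $2$, and — as already noted in the remark following Lemma \ref{lem:N} — $\BlF^\bullet$ is itself quasi-isomorphic to a $[0,1]$-complex because $\pi_*L(\tilde A)|_{\tilde A}^{\oplus(r+1)}\to \pi_*f^*N(\tilde A)|_{\tilde A}$ is surjective (Proposition \ref{definevirtualclasses}(ii)).

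The main obstacle, and the only step requiring genuine care rather than bookkeeping, is verifying that the degree-$2$ component of the chain maps in \eqref{nzerolambda downstairs} and \eqref{nzerolambda upstairs} is genuinely an isomorphism of the top terms (so that the cone truncates), and, relatedly, that the resulting degree-$1$ term of the cone is locally free rather than merely coherent. The first point is immediate from the construction in Lemma \ref{lem:N} — the diagram there has $0$ in degree $2$ on the source and the chain map is built so the target's degree-$2$ term is left untouched — but stated that way it shows the cone has \emph{no} degree-$2$ term at all, which is even stronger than needed; the second point is handled by the base-change/convexity statements already established (Proposition \ref{Nlambda} and Remark \ref{MXP} give that $\pi_*f^*N$ is a bundle on each stratum, and \eqref{eq:twoheadedarrows} gives the needed surjectivity), so no new input is required. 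Hence the lemma reduces to assembling these observations.
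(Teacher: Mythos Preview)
Your plan contains a genuine error in how you handle the mapping cone. The source complex $(N^\lambda)^\bullet = [0 \to N^\lambda]$ is supported in degrees $[0,1]$; it has \emph{nothing} in degree~$2$. So your claim that ``both sides have $\pi_*f^*N(\tilde A)|_{\tilde A}$ in degree~$2$, and the component of the chain map there is the identity'' is simply false, and the degree-$2$ term does \emph{not} cancel on the nose. With the standard convention $\operatorname{Cone}(u)^n = A^{n+1}\oplus B^n$, the cone of $(N^\lambda)^\bullet \to R^\bullet\pi_*f^*F$ is the three-term complex
\[
\bigl[\,\pi_*L(\tilde A)^{\oplus(r+1)}\oplus N^\lambda \;\to\; \pi_*L(\tilde A)|_{\tilde A}^{\oplus(r+1)}\oplus \pi_*f^*N(\tilde A)\;\to\;\pi_*f^*N(\tilde A)|_{\tilde A}\,\bigr]
\]
in degrees $[0,2]$: the term $N^\lambda$ lands in degree~$0$ (not degree~$1$ as you wrote), and the degree-$2$ term of the target survives unchanged. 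Your displayed two-term model, with $N^0$ in degree~$1$ and a ``quotient'' by $\pi_*f^*N(\tilde A)|_{\tilde A}$, is not well-formed: that sheaf is the \emph{target} of the last differential, not a subobject of the degree-$1$ term.

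The correct mechanism --- and this is exactly what the paper does --- is to observe that the last differential in the three-term cone above, namely
\[
\pi_*L(\tilde A)|_{\tilde A}^{\oplus(r+1)}\oplus \pi_*f^*N(\tilde A)\;\longrightarrow\;\pi_*f^*N(\tilde A)|_{\tilde A},
\]
is surjective (either factor already surjects; cf.\ Proposition~\ref{definevirtualclasses}(ii)). Hence the cone is quasi-isomorphic to the two-term complex of vector bundles
\[
\bigl[\,\pi_*L(\tilde A)^{\oplus(r+1)}\oplus N^\lambda \;\to\; K\,\bigr],
\]
where $K$ is the \emph{kernel} of that surjection. This is perfect and concentrated in $[0,1]$. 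The blown-up case is identical after pulling back along $p_{\pp}$ and replacing $\pi_*L(\tilde A)^{\oplus(r+1)}$ by $T_{\BltildeV/\tildefrPic}$. So the fix is a single line: replace your ``cancellation on the nose'' story with the surjectivity of the last differential and pass to its kernel.
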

\begin{proof} Let $\lambda \in \{0\} \cup I$.  The mapping cone of the morphism in \eqref{nzerolambda downstairs} is the complex 
\[
[\pi_*L(\tilde{A})^{\oplus (r+1)}\oplus N^\lambda\to \pi_*L(\tilde{A})|_{\tilde{A}}^{\oplus (r+1)}\oplus \pi_*f^*N(\tilde{A})\to \pi_*f^*N(\tilde{A})|_{\tilde{A}}]
\]
supported in $[0,2]$. This is quasi-isomorphic to the complex of vector bundles
\[
[\pi_*L(\tilde{A})^{\oplus (r+1)}\oplus N^\lambda\to K]
\]
supported in $[0,1]$, where $K$ is the kernel of 
\begin{equation}
  \label{def of K}
  \pi_*L(\tilde{A})|_{\tilde{A}}^{\oplus (r+1)}\oplus \pi_*f^*N(\tilde{A})\to \pi_*f^*N(\tilde{A})|_{\tilde{A}}.
\end{equation}
\end{proof}

Let us write 
\begin{align*}
  \frF & \text{ for the vector bundle stack $h^1/h^0(R\pi_* f^* F)$} \\
  \frBlF & \text{ for the vector bundle stack $h^1/h^0(\BlF^\bullet)$} \\
  \frG^\lambda & \text{ for the vector bundle stack $h^1/h^0((G^\lambda)^{\bullet})$} && \lambda \in \{0\} \cup I \\
  \frBlG^\lambda & \text{ for the vector bundle stack $h^1/h^0(\BlGlambda^{\bullet})$} && \lambda \in \{0\} \cup I \\
\end{align*}
Dualizing the preceding discussion and applying the construction in \S\ref{deformations} gives deformations
\begin{align*}
  \frF\big|_{\tildeMX^0}  \rightsquigarrow N^0 \oplus \frG^0 &&
  \text{and} &&
  \frF\big|_{\tildeMXP^\lambda}  \rightsquigarrow N^\lambda \oplus \frG^\lambda. 
\end{align*}
Here we used the fact that, for $\lambda \in \{0\} \cup I$, the vector bundle stack $h^1/h^0((N^\lambda)^\bullet)$ is just the vector bundle $N^\lambda$.  Similarly there are deformations
\begin{align*}
  \frBlF\big|_{\BltildeMX^0} & \rightsquigarrow p_{\pp}^* N^0 \oplus \frBlG^0 
  \intertext{and} 
  \frBlF\big|_{\BltildeMXP^\lambda} & \rightsquigarrow p_{\pp}^* N^\lambda \oplus \frBlG^\lambda. 
\end{align*}

\section{The deformation space and its blow-up}

\begin{notation}
  Let $F$ and $G$ be Artin stacks.  Given a morphism $F \to G$ of Deligne--Mumford type, Kresch~\cite{Kresch} defines a stack $M^\circ_F G$ together with a flat morphism $M^\circ_F G \to \pp^1$ with general fibre $G$ and fibre over $0 \in \pp^1$ the normal cone $C_{F/G}$.  This is a generalisation of Fulton--MacPherson's \emph{deformation to the normal cone}~\cite{f}.  The construction is spelled out in detail in~\cite[Theorem~2.31]{eu}; see also~\cite{kkp}.  Since we already have a number of spaces called `$M$', we will use different notation, writing $\Def_F G$ for $M^\circ_F G$.
\end{notation}

\begin{remark} \label{section}
  There is a commutative diagram
  \[
    \xymatrix{
      F \times \pp^1 \ar[rd] \ar[r] & \Def_F G \ar[d] \\
      & \pp^1
    }
  \]
  where the diagonal arrow is projection to the second factor. 
\end{remark}

Let us now analyse the obstruction theory of $\BltildeMX \times \pp^1$ in $\Def_{\BltildeMP} \tildefrPic$.  Observe first that there are morphisms of complexes on $\tildeMX^0$ and $\tildeMXP^\lambda$
\begin{align} \label{G to L}
  (G^0)^{\bullet}\to R^\bullet \pi_* L^{\oplus (r+1)} && (G^\lambda)^{\bullet}\to R^\bullet \pi_* L^{\oplus (r+1)}
\end{align}
This follows by considering the morphism
\[\xymatrix{\pi_*L(\tilde{A})^{\oplus (r+1)}\oplus N^\lambda\ar[r]\ar[d]&\pi_*L(\tilde{A})^{\oplus (r+1)}\ar[d]\\
K\ar[r]&\pi_*L(\tilde{A})|_{\tilde{A}}^{\oplus (r+1)}
}
\]
where the upper horizontal arrow is the projection; the lower horizontal arrow is the embedding $K\to \pi_*L(\tilde{A})|_{\tilde{A}}^{\oplus (r+1)}\oplus \pi_*f^*N(\tilde{A})$ followed by projection to $\pi_*L(\tilde{A})|_{\tilde{A}}^{\oplus (r+1)}$; the vector bundle $K$ was defined in \eqref{def of K}; and $\lambda \in \{0\} \cup I$.  There are also canonical morphisms of complexes on $\tildeMX^0$ and $\tildeMXP^\lambda$
\begin{align} \label{F to L}
  R^\bullet \pi_* F & \to R^\bullet \pi_* L^{\oplus (r+1)} &   R^\bullet \pi_* F & \to R^\bullet \pi_* L^{\oplus (r+1)} 
\intertext{and}
  \label{F to G}   
  R^\bullet \pi_* F & \to  (G^0)^{\bullet} & R^\bullet \pi_* F &\to   (G^\lambda)^{\bullet} 
\end{align}
Here \eqref{F to L} arises from applying $R^\bullet \pi_*$ to the map $F \to L^{\oplus(r+1)}$ and \eqref{F to G} arises from the construction of $(G^\lambda)^\bullet$ as a cone, $\lambda \in \{0\} \cup I$.

Dualising \eqref{F to L} and applying Construction~\ref{con:deformation}  yields morphisms
\[
h:(R^\bullet\pi_*L^{\oplus (r+1)})^{\vee} \otimes O_{\pp^1}(-1)\to (R^\bullet\pi_*L^{\oplus (r+1)})^{\vee} \oplus (R^\bullet\pi_*F)^{\vee}
\]
over $\tildeMX^0 \times \pp^1$ and $\tildeMXP^\lambda \times \pp^1$, $\lambda \in I$.  Dualising \eqref{F to G} and applying Construction~\ref{con:deformation}  yields morphisms
\begin{align*}
  g: (G^0)^{\vee} \otimes O_{\pp^1}(-1)\to(G^0)^{\vee}\oplus (R^\bullet\pi_*F)^{\vee} &&& \text{over $\tildeMX^0 \times \pp^1$}\\
  g: (G^\lambda)^{\vee} \otimes O_{\pp^1}(-1)\to(G^\lambda)^{\vee}\oplus (R^\bullet\pi_*F)^{\vee} &&& \text{over $\tildeMXP^\lambda \times \pp^1$, $\lambda \in I$.}
\end{align*}
Write $c(g)$ for the mapping cone of $g$ and $c(h)$ for the mapping cone of $h$.  The morphism \eqref{G to L} induces a morphism of complexes from $c(g)$ to $c(h)$.

Consider the commutative diagram
\[
\xymatrix{
  (R^\bullet\pi_*L^{\oplus (r+1)})^{\vee} \ar[r] \ar[d] & (R^\bullet\pi_*F)^{\vee} \ar[d] \\ 
  j^* L^\bullet_{\tildeMP/\tildefrPic} \ar[r] & L^\bullet_{\tildeMX/\tildefrPic}
  }
\]
where $j$ is the inclusion of $\tildeMX$ in $\tildeMP$.  Applying Construction~\ref{con:deformation} to the bottom morphism yields 
\[
  l:  j^* L^\bullet_{\tildeMP/\tildefrPic}  \otimes O_{\pp^1}(-1) \to j^* L^\bullet_{\tildeMP/\tildefrPic} \oplus L^\bullet_{\tildeMX/\tildefrPic}
\]
over $\tildeMX \times \pp^1$, and applying the Four Lemma to the morphism
\[
  \xymatrix{
    j^* (E^\bullet_{\tildeMP/\tildefrPic})^\vee \otimes O_{\pp}(-1) \ar[r]^-{h}\ar[d] &
    j^*(E^\bullet_{\tildeMP/\tildefrPic})^\vee \oplus (R^\bullet \pi_* F)^\vee \ar[r]\ar[d] &
    c(h) \ar[d] \\
    j^* L^\bullet_{\tildeMP/\tildefrPic} \otimes O_{\pp^1}(-1) \ar[r]^-{l} & 
    j^* L^\bullet_{\tildeMP/\tildefrPic} \oplus L^\bullet_{\tildeMX/\tildefrPic} \ar[r] &
    c(l)
  }
\]
of distinguished triangles gives an embedding of cones 
\[
  h^1/h^0(c(l)^{\vee})\hookrightarrow h^1/h^0(c(h)^{\vee}).
\]
Writing $c(h)^{\geq -1}$ for the $[-1,0]$  truncation of the complex $c(h)$, we see that there is an embedding of cones
\[
  h^1/h^0(c(l)^{\vee})\hookrightarrow h^1/h^0((c(h)^{\geq -1})^{\vee}).
\]

\begin{lemma}
  The cone stack $h^1/h^0(c(h)^{\geq -1})^{\vee}$ over $\pp^1$ is isomorphic to 
\[
h^1/h^0(R^\bullet\pi_*f^*F)\times\A^1
\]
over $\pp^1\setminus 0$, and the fibre over $0 \in \pp^1$ is $\pi_*f^*N\oplus j^* \frE_{\tildeMP/\tildefrPic} $.
\end{lemma}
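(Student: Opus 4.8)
The plan is to recognise $h^1/h^0(c(h)^{\geq -1})^{\vee}$ as exactly the family of cone stacks over $\pp^1$ produced by Construction~\ref{con:deformation} applied to the canonical morphism \eqref{F to L}, that is, to $R^{\bullet}\pi_*f^*F\to R^{\bullet}\pi_*L^{\oplus(r+1)}$, and then to read off its two fibres from the general theory of \S\ref{deformations}. First I would note that $h=(T\cdot\mathrm{id},U\cdot\phi)$ is termwise injective, because its first component is multiplication by the nonzerodivisor $T\in H^0(\pp^1,O(1))$; hence $c(h)$ is quasi-isomorphic to the honest cokernel of $h$, which is concentrated in $[-1,0]$, so that $c(h)^{\geq -1}$ is this cokernel and $h^1/h^0(c(h)^{\geq -1})^{\vee}$ is precisely the cone stack $\mathcal F$ over $\tildeMX\times\pp^1$ of Construction~\ref{con:deformation} for \eqref{F to L}.

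For the general fibre I would invoke the Proposition following Construction~\ref{con:deformation}: $\mathcal F$ is trivial over $\A^1=\pp^1\setminus 0$, with fibre $\frF=h^1/h^0(R^{\bullet}\pi_*f^*F)$. This yields $h^1/h^0(c(h)^{\geq -1})^{\vee}\big|_{\pp^1\setminus 0}\cong\frF\times\A^1$.

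For the fibre over $0$ I would restrict $h$, which at $0$ becomes $(0,\phi)$. Since $(0,\phi)$ factors as $(R^{\bullet}\pi_*L^{\oplus(r+1)})^{\vee}\xrightarrow{\phi}(R^{\bullet}\pi_*f^*F)^{\vee}\hookrightarrow(R^{\bullet}\pi_*L^{\oplus(r+1)})^{\vee}\oplus(R^{\bullet}\pi_*f^*F)^{\vee}$ with the second map a split inclusion, the restricted cone splits as the vector bundle stack of $(R^{\bullet}\pi_*L^{\oplus(r+1)})^{\vee}$ together with that of $\mathrm{cone}(\phi)$. The first of these is the standard (dual) obstruction theory stack of $\tildeMP$ over $\tildefrPic$, pulled back along $j$, namely $j^*\frE_{\tildeMP/\tildefrPic}$. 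For the second, the exact sequence $0\to F\to O(1)^{\oplus(r+1)}\to N\to 0$ on $X$ identifies $\mathrm{cone}$ of \eqref{F to L} with $R^{\bullet}\pi_*f^*N$; the point of the truncation $(-)^{\geq -1}$ is exactly that it discards the $R^1\pi_*f^*N$ contribution, so that the vector bundle stack of the truncated cone is the total space of the sheaf $\pi_*f^*N$, whose component structure over the $\tildeMX^{\lambda}$ is given by Proposition~\ref{Nlambda} and Remark~\ref{MXP}. Hence the fibre over $0$ is $\pi_*f^*N\oplus j^*\frE_{\tildeMP/\tildefrPic}$.

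The step I expect to be the main obstacle is making this truncation rigorous. Along the loci where the genus-one component of the domain is contracted the complexes $R^{\bullet}\pi_*f^*F$ and $R^{\bullet}\pi_*f^*N$ genuinely have nonzero $R^1$, so Construction~\ref{con:deformation} does not literally apply and the cokernel of $h$ acquires torsion over $\{0\}$; one has to verify that forming the mapping cone and truncating to amplitude $[-1,0]$ still produces a family flat over $\pp^1$, and that its fibre over $0$ is exactly the total space of $\pi_*f^*N$ — with no residual $R^1$-piece and no reflexive-hull correction coming from the failure of $\pi_*f^*N$ to be locally free on all of $\tildeMX$. This is where the Vakil--Zinger blow-up and the explicit description of $\pi_*f^*N$ in Proposition~\ref{Nlambda} do the real work, and where it is cleanest to run the argument component by component on $\tildeMX^0$ and on the $\tildeMXP^{\lambda}$.
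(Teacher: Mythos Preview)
Your proposal is correct and is essentially the same argument as the paper's, just phrased more conceptually. The paper's proof consists of writing out the three-term complex representing $c(h)$ locally (using the resolution $[\pi_*L(A)^{\oplus(r+1)}\to\pi_*L(A)|_A^{\oplus(r+1)}\oplus\pi_*f^*N(A)\to\pi_*f^*N(A)|_A]$ of $R^\bullet\pi_*f^*F$) and then asserting that the result follows by inspection; you instead invoke Construction~\ref{con:deformation} and its following Proposition directly, which is exactly how $h$ was defined in the first place, and then identify the special fibre by restricting $h$ to $(0,\phi)$ and splitting.

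One small comment: your worry about the truncation is more caution than is needed here. Once you write $c(h)|_{0}\simeq (R^\bullet\pi_*L^{\oplus(r+1)})^\vee\oplus(R^\bullet\pi_*f^*N)^\vee[1]$, the second summand sits in degrees $[-2,-1]$ and so contributes nothing to $h^0$ of the dual; the truncation $(-)^{\geq-1}$ simply replaces it by $(\pi_*f^*N)^\vee$ in degree $-1$, and taking $h^1/h^0$ of the dual gives the total space of $\pi_*f^*N$. No flatness argument or reflexive-hull correction is required at the level of $h^1/h^0$, since that functor only sees the $[-1,0]$ truncation anyway. The paper's explicit resolution makes this visible termwise, but your derived-category phrasing reaches the same conclusion.
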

\begin{proof}
  Locally, we have that $c(h)$ is the complex
  \[
    \hspace{-3cm}
    \xymatrix{
      \left((R^\bullet\pi_*L(A)|_A^{\oplus (r+1)})^{\vee} \otimes O_{\pp^1}(-1) \right) 
      \oplus (\pi_*f^*N(A)|_A)^{\vee } \ar[d] \\ 
      % the makebox here is to work around what seems to be an xypic bug, where
      % it sees the box contents as too wide to render
      \makebox[.8\textwidth][l]{ $
        \left((R^\bullet\pi_*L(A)^{\oplus (r+1)})^{\vee} \otimes O_{\pp^1}(-1) \right)  
        \oplus (R^\bullet\pi_*L(A)|_A^{\oplus (r+1)})^{\vee}
        \oplus (R^\bullet\pi_*L(A)|_A^{\oplus (r+1)})^{\vee}
        \oplus (\pi_* f^*N(A))^\vee $} \ar[d] \\
      (R^\bullet\pi_*L(A)^{\oplus (r+1)})^{\vee} \oplus (R^\bullet\pi_*L(A)^{\oplus (r+1)})^{\vee}
    }
  \]
  supported in $[-2,0]$.  The result follows.
\end{proof}

There is an entirely analogous story on the blown up moduli spaces.  There are morphisms of complexes
on $\BltildeMX^0$ and $\BltildeMXP^\lambda$
\begin{align} 
\label{Bl G to L}
  (\BlGzero)^{\bullet} & \to E^\bullet_{\BltildeMP/\tildefrPic} & (\BlGlambda)^{\bullet} & \to  E^\bullet_{\BltildeMP/\tildefrPic} \\
\intertext{as well as}
\label{Bl F to L}
  \BlF^\bullet & \to E^\bullet_{\BltildeMP/\tildefrPic}&   \BlF^\bullet & \to E^\bullet_{\BltildeMP/\tildefrPic}
\intertext{and}
\label{Bl F to G}   
  \BlF^\bullet & \to  (\BlGzero)^{\bullet} & \BlF^\bullet &\to   (\BlGlambda)^{\bullet} 
\end{align}
Here \eqref{Bl G to L} arises by arguing as in the proof of Lemma~\ref{lem:G}; \eqref{Bl F to L} 
arises by arguing as in the proof of Lemma~\ref{lem:N}; and
\eqref{Bl F to G} arises from the construction of $(G^\lambda)^\bullet$ as a cone, $\lambda \in \{0\} \cup I$.  Dualising \eqref{Bl F to L} and applying Construction~\ref{con:deformation}  yields morphisms
\[
Bl(h):(E^\bullet_{\BltildeMP/\tildefrPic})^\vee \otimes O_{\pp^1}(-1)\to (E^\bullet_{\BltildeMP/\tildefrPic})^{\vee} \oplus (\BlF^\bullet)^{\vee}
\]
over $\BltildeMX^0 \times \pp^1$ and $\BltildeMXP^\lambda \times \pp^1$, $\lambda \in I$.  Dualising \eqref{Bl F to G} and applying Construction~\ref{con:deformation}  yields morphisms
\[
  Bl(g): (\BlGzero)^{\bullet \vee} \otimes O_{\pp^1}(-1)\to(\BlGzero)^{\bullet \vee}\oplus (\BlF^\bullet)^{\vee} 
\]
over $\BltildeMX^0 \times \pp^1$ and
\[
  Bl(g): (\BlGlambda)^{\bullet \vee} \otimes O_{\pp^1}(-1)\to(\BlGlambda)^{\bullet \vee}\oplus (\BlF^\bullet)^{\vee}
\]
over $\BltildeMXP^\lambda \times \pp^1$, $\lambda \in I$.  The morphism \eqref{Bl G to L} induces a morphism of complexes between the mapping cones $c(Bl(g))$ and $c(Bl(h))$.  As before, applying Construction~\ref{con:deformation} to the morphism of cotangent complexes 
\[
j^* L_{\BltildeMP/\tildefrPic} \to L^\bullet_{\BltildeMX/\tildefrPic}
\]
yields
\[
  Bl(l):  j^* L^\bullet_{\BltildeMP/\tildefrPic}  \otimes O_{\pp^1}(-1) \to j^* L^\bullet_{\BltildeMP/\tildefrPic} \oplus L^\bullet_{\BltildeMX/\tildefrPic}
\]
over $\BltildeMX \times \pp^1$, and there is an embedding of cones 
\[
  h^1/h^0(c(Bl(l))^{\vee})\hookrightarrow h^1/h^0((c(Bl(h))^{\geq -1})^{\vee}).
\]

\begin{lemma} \label{Bl(h) family}
  The cone stack $h^1/h^0((c(Bl(h))^{\geq -1})^{\vee})$ over $\pp^1$ is isomorphic to 
\[
h^1/h^0(\BlF^\bullet)\times\A^1
\]
over $\pp^1\setminus 0$, and the fibre over $0 \in \pp^1$ is $\pi_*f^*N\oplus j^*\frE_{\BltildeMP/\tildefrPic}$.
\end{lemma}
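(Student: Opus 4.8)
The plan is to mirror the proof of the preceding lemma --- the corresponding statement for $c(h)$ on $\tildeMX\times\pp^1$ --- making only the modifications forced by passing from $\tildeV$ to its blow-up $\BltildeV$.

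First I would record a local form of $c(Bl(h))$. As in the proof of Lemma~\ref{reduced obstruction}, after shrinking to an open subset of the smooth locus of a component of $\tildeMP$ and passing to an \'etale cover we may assume $\tilde{A}=A+A_1$ with $A$ a section of the universal curve over $\tildefrPic$. Using the $\tilde{A}$-twisted representative of $R^\bullet\pi_*f^*F$ from the proof of Lemma~\ref{lem:N} together with the descriptions of $\BlF^\bullet$ and $E^\bullet_{\BltildeMP/\tildefrPic}$ in Proposition~\ref{definevirtualclasses}, dualising \eqref{Bl F to L} and applying Construction~\ref{con:deformation} exhibits $c(Bl(h))$ as a complex supported in $[-2,0]$ whose degree $-2$ term is $\bigl((R^\bullet\pi_*L(A)|_A^{\oplus(r+1)})^{\vee}\otimes O_{\pp^1}(-1)\bigr)\oplus(\pi_*f^*N(A)|_A)^{\vee}$ and whose degree $-1$ and degree $0$ terms are exactly those in the proof of the preceding lemma, with every occurrence of $p_{\pp}^*T_{\tildeV/\tildefrPic}$ replaced by $T_{\BltildeV/\tildefrPic}$ and all other bundles replaced by their $p_{\pp}$-pullbacks.

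Next I would truncate to $[-1,0]$ and read off the fibres. By the remark following Lemma~\ref{lem:N}, $\BlF^\bullet$ is quasi-isomorphic to a complex in $[0,1]$, so $(\BlF^\bullet)^{\vee}$, and hence $c(Bl(h))$, has no cohomology in degree $-2$; therefore the $[-1,0]$-truncation is flat over $\pp^1$ and commutes with restriction to the fibres, discarding only the degree $-2$ term above. From here the argument runs as in the preceding lemma: over $\pp^1\setminus 0$ the twist by $O_{\pp^1}(-1)$ is trivial and, after the automorphism used in the proof that the deformation family of Construction~\ref{con:deformation} is trivial over $\A^1$, the family becomes $h^1/h^0(\BlF^\bullet)\times\A^1=\frBlF\times\A^1$; over $0\in\pp^1$, Construction~\ref{con:deformation} identifies the fibre as the direct sum of $j^*\frE_{\BltildeMP/\tildefrPic}=h^1/h^0(E^\bullet_{\BltildeMP/\tildefrPic})$ and the cone stack of the $[-1,0]$-truncated mapping cone of \eqref{Bl F to L}. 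Since $F$ is the kernel of $O(1)^{\oplus(r+1)}\to N$ on $X$, that mapping cone is quasi-isomorphic --- after the pullbacks and the $\tilde{A}$-twist, which cancel in the cone --- to $p_{\pp}^*R^\bullet\pi_*f^*N$; because the $\tilde{A}$-twisted representative of $R^\bullet\pi_*f^*N$ has no $R^1$ and $\pi_*f^*N$ restricts to a vector bundle on each component by Proposition~\ref{Nlambda}, the associated cone stack is the total space of the sheaf $\pi_*f^*N$ on $\BltildeMX$. This gives the fibre over $0$ as $\pi_*f^*N\oplus j^*\frE_{\BltildeMP/\tildefrPic}$.

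The step I expect to be the main obstacle is the bookkeeping in the first paragraph: one must check that replacing $p_{\pp}^*T_{\tildeV/\tildefrPic}$ by $T_{\BltildeV/\tildefrPic}$ --- which differ by the blow-up correction $i_*Q$ appearing in $0\to T_{\BltildeV/\tildefrPic}\to p_{\tildeV}^*T_{\tildeV}\to i_*Q\to 0$, with $Q$ supported on the exceptional divisor --- changes neither the degree $-2$ cohomology used to justify the truncation nor the identification of the mapping cone of \eqref{Bl F to L} with $p_{\pp}^*R^\bullet\pi_*f^*N$. Both follow because $T_{\BltildeV/\tildefrPic}$ and $p_{\tildeV}^*T_{\tildeV/\tildefrPic}$ occupy degree $0$ of every complex in sight and $Q$ is a direct summand of $p_{\tildeV}^*T_{\tildeV}|_{\BltildeMP}$ over the exceptional divisor, so it contributes only to $h^0$ and drops out of each mapping cone, exactly as in the unblown-up case.
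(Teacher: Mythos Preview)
Your proposal is correct and takes the same approach as the paper: display the explicit local form of $c(Bl(h))$ as a three-term complex in degrees $[-2,0]$ and read off the fibres of the truncation. The paper's own proof consists only of that display followed by ``The result follows''; your description of the terms agrees with the paper's once one identifies $\pi_*L(\tilde A)^{\oplus(r+1)}$ with $T_{\tildeV/\tildefrPic}$ and inserts the $p_{\pp}$-pullbacks.

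One small remark: your final paragraph is an unnecessary detour. The complexes $\BlF^\bullet$ and $E^\bullet_{\BltildeMP/\tildefrPic}$ are \emph{defined} with $T_{\BltildeV/\tildefrPic}$ in degree~$0$ (Proposition~\ref{definevirtualclasses} and Lemma~\ref{lem:N}), so $c(Bl(h))$ already involves $T_{\BltildeV/\tildefrPic}$ from the outset --- there is no step where one passes from $p_{\tildeV}^*T_{\tildeV/\tildefrPic}$ to $T_{\BltildeV/\tildefrPic}$, and hence no correction sheaf $Q$ to track. (The assertion that $Q$ is a direct summand would in any case require justification.)
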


\begin{proof}
  Locally, we have that $c(Bl(h))$ is the complex
  \[
    \hspace{-3cm}
    \xymatrix{
      \left(p_{\pp}^*(R^\bullet\pi_*L(A)|_A^{\oplus (r+1)})^{\vee} \otimes O_{\pp^1}(-1) \right) 
      \oplus p_{\pp}^*(\pi_*f^*N(A)|_A)^{\vee } \ar[d] \\ 
      % the makebox here is to work around what seems to be an xypic bug, where
      % it sees the box contents as too wide to render
      \makebox[.8\textwidth][l]{ $
        \left(T_{\BltildeV/\tildefrPic}^{\vee} \otimes O_{\pp^1}(-1) \right)  
        \oplus p_{\pp}^*(R^\bullet\pi_*L(A)|_A^{\oplus (r+1)})^{\vee}
        \oplus p_{\pp}^*(R^\bullet\pi_*L(A)|_A^{\oplus (r+1)})^{\vee}
        \oplus p_{\pp}^*(\pi_* f^*N(A))^\vee $} \ar[d] \\
      T_{\BltildeV/\tildefrPic}^{\vee} \oplus T_{\BltildeV/\tildefrPic}^{\vee}
    }
  \]
supported in $[-2,0]$.  The result follows.
\end{proof}

\subsection{A blow-up of the deformation space}

The complex $c(Bl(h))$ fails to be perfect on a codimension-$2$ subset of $\Def_{\BltildeMP}\tildefrPic$.  We therefore consider the blow-up of the deformation space along this locus, and twist the analogous complex on the blow-up so that it becomes perfect.

\begin{construction} Consider the blow-up of $\Def_{\BltildeMP}\tildefrPic$ along the locus 
\[
  C_{\BltildeMP/\tildefrPic}\big|_{\BltildeMP^0\cap \BltildeMP^{\lambda}}
\]
in the fibre over $0 \in \pp^1$.  We denote the blown up space by 
\[
p:\Def^\prime_{\BltildeMP}\tildefrPic\to \Def_{\BltildeMP}\tildefrPic
\]
and the exceptional divisor by $D$.   Consider the Cartesian diagram
\begin{equation}
  \label{Z}
  \begin{aligned}
    \xymatrix{
      Z \ar[r] \ar[d]_{p_Z} & \Def^\prime_{\BltildeMP}\tildefrPic \ar[d]^{p} \\
      \BltildeMXP \times \pp^1 \ar[r] & \Def_{\BltildeMP}\tildefrPic
    }
  \end{aligned}
\end{equation}
where the bottom horizontal map arises from Remark~\ref{section}.  Let $D_Z$ denote the exceptional divisor for $p_Z$.
\end{construction}

Let us extend the Cartesian diagram \eqref{Z} to a larger Cartesian diagram
\begin{equation}
  \label{larger Cartesian diagram}
  \begin{aligned}
    \xymatrix{
      Z(X) \ar[r] \ar[d]_{p_{Z(X)}} & Z \ar[r] \ar[d]_{p_Z} & \Def^\prime_{\BltildeMP}\tildefrPic \ar[d]^{p} \\
      \BltildeMX \times \pp^1 \ar[r] & \BltildeMXP \times \pp^1 \ar[r] & \Def_{\BltildeMP}\tildefrPic.
    }
  \end{aligned}
\end{equation}

\begin{lemma} \label{Bl(tildeh)}
  Let $c(\tilde{h})$ denote the complex $\big(p_Z^* c(Bl(h))\big)^{\geq -1}$.   Then there are embeddings of cones 
  \[
    C_{Z(X)/\Def^\prime_{\BltildeMP}\tildefrPic} \hookrightarrow 
    h^1/h^0(c(Bl(l))^\vee) \hookrightarrow 
    h^1/h^0(c(\tilde{h})^\vee) 
  \]
  where $c(\tilde{h})$ is the mapping cone of $\tilde{h}$.
\end{lemma}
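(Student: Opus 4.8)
The plan is to obtain both embeddings by base-changing, along the blow-up $p\colon\Def^\prime_{\BltildeMP}\tildefrPic\to\Def_{\BltildeMP}\tildefrPic$, the chain of cone embeddings over $\BltildeMX\times\pp^1$ assembled just above. Concretely, arguing as in Kim--Kresch--Pantev~\cite{kkp}, the complex $c(Bl(l))$ computes the cotangent complex of $\BltildeMX\times\pp^1$ relative to $\Def_{\BltildeMP}\tildefrPic$, so that $c(Bl(l))^\vee$ is an obstruction theory for this morphism and the normal cone stack satisfies
\[
C_{\BltildeMX\times\pp^1/\Def_{\BltildeMP}\tildefrPic}\hookrightarrow h^1/h^0(c(Bl(l))^\vee);
\]
the Four Lemma, applied to the morphism of distinguished triangles relating $Bl(l)$ to $Bl(h)$ and then truncated to degrees $[-1,0]$, gives the further embedding $h^1/h^0(c(Bl(l))^\vee)\hookrightarrow h^1/h^0\big((c(Bl(h))^{\geq -1})^\vee\big)$. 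All of this is carried out over $\BltildeMX^0\times\pp^1$ and over each $\BltildeMXP^\lambda\times\pp^1$ separately, since $\BlF^\bullet$, $\BlGzero^\bullet$ and $\BlGlambda^\bullet$ are defined componentwise.

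First I would record the compatibility of normal cones with base change. Diagram~\eqref{larger Cartesian diagram} exhibits $Z(X)$ as $(\BltildeMX\times\pp^1)\times_{\Def_{\BltildeMP}\tildefrPic}\Def^\prime_{\BltildeMP}\tildefrPic$, and the structure map $\BltildeMX\times\pp^1\to\Def_{\BltildeMP}\tildefrPic$ is a closed immersion of Deligne--Mumford type, being the composite $\BltildeMX\times\pp^1\hookrightarrow\BltildeMXP\times\pp^1\hookrightarrow\BltildeMP\times\pp^1\hookrightarrow\Def_{\BltildeMP}\tildefrPic$, the last arrow being the section from Remark~\ref{section}. Hence, by the standard base-change inclusion for normal cones (\cite{f}; in the stacky setting see~\cite{Kresch}), there is a closed embedding of cone stacks over $Z(X)$,
\[
C_{Z(X)/\Def^\prime_{\BltildeMP}\tildefrPic}\hookrightarrow p_{Z(X)}^*\,C_{\BltildeMX\times\pp^1/\Def_{\BltildeMP}\tildefrPic}.
\]

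Next I would verify that $p_{Z(X)}^*$ commutes with the operations defining the vector bundle stacks on the right. Since $c(Bl(h))$ is a bounded complex of vector bundles, the $[-1,0]$-truncation commutes with pullback, so $c(\tilde h)=\big(p_Z^*c(Bl(h))\big)^{\geq -1}$ equals $p_Z^*$ of the truncation of $c(Bl(h))$; as $h^1/h^0$ of a complex depends only on its $[-1,0]$ part and commutes with pullback of vector bundle stacks, we get $h^1/h^0(c(\tilde h)^\vee)=p_{Z(X)}^*h^1/h^0\big((c(Bl(h))^{\geq -1})^\vee\big)$ (restricting from $Z$ to $Z(X)$ is harmless), and likewise $p_{Z(X)}^*h^1/h^0(c(Bl(l))^\vee)$ is the vector bundle stack denoted $h^1/h^0(c(Bl(l))^\vee)$ in the statement. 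Concatenating the base-changed chain
\[
C_{Z(X)/\Def^\prime_{\BltildeMP}\tildefrPic}\hookrightarrow p_{Z(X)}^*C_{\BltildeMX\times\pp^1/\Def_{\BltildeMP}\tildefrPic}\hookrightarrow p_{Z(X)}^*h^1/h^0(c(Bl(l))^\vee)\hookrightarrow h^1/h^0(c(\tilde h)^\vee)
\]
then gives the two embeddings in the Lemma; as before, this is done one component at a time.

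The step I expect to be the main obstacle is the first link of the chain over $\BltildeMX\times\pp^1$: showing that $c(Bl(l))^\vee$ is genuinely an obstruction theory for $\BltildeMX\times\pp^1$ over $\Def_{\BltildeMP}\tildefrPic$. This requires transporting the Kim--Kresch--Pantev identification of the cotangent complex of a morphism into a deformation-to-the-normal-cone through the two successive blow-ups, and in particular checking that it is unaffected by the fact that $\BlF^\bullet$, $\BlGzero^\bullet$ and $\BlGlambda^\bullet$ exist only componentwise; by comparison, the base-change bookkeeping in the remaining steps is routine.
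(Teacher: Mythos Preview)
Your proposal is correct and follows essentially the same route as the paper's proof. Both rely on \cite[Proposition~1]{kkp} for the first embedding and on the Four Lemma argument (the discussion just before Lemma~\ref{Bl(h) family}) for the second, together with the observation that truncation and $h^1/h^0$ commute with pullback along $p_Z$. The only difference is one of explicitness: the paper simply cites \cite[Proposition~1]{kkp} to obtain $C_{Z(X)/\Def^\prime_{\BltildeMP}\tildefrPic}\hookrightarrow h^1/h^0(c(Bl(l))^\vee)$ directly, whereas you factor this through the base-change inclusion $C_{Z(X)/\Def^\prime_{\BltildeMP}\tildefrPic}\hookrightarrow p_{Z(X)}^*C_{\BltildeMX\times\pp^1/\Def_{\BltildeMP}\tildefrPic}$ followed by the pulled-back embedding from KKP applied downstairs. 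Your closing worry is overstated: $Bl(l)$ is built purely from cotangent complexes and is defined globally on $\BltildeMX\times\pp^1$, so KKP applies to $\BltildeMX\to\BltildeMP\to\tildefrPic$ without needing to work componentwise, and the blow-ups play no role in verifying that $c(Bl(l))^\vee$ receives the cone.
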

\begin{proof}  
  Recall the definition of $Bl(l)$ just above Lemma~\ref{Bl(h) family}.   Kim--Kresch--Pantev prove that there is an embedding of cones~\cite[Proposition~1]{kkp}
  \[
    C_{Z(X)/\Def^\prime_{\BltildeMP}\tildefrPic} \hookrightarrow h^1/h^0(c(Bl(l))^\vee).
  \]
  The discussion just before Lemma~\ref{Bl(h) family} shows that $h^1/h^0((c(Bl(h))^{\geq -1})^{\vee})$ contains $h^1/h^0(c(Bl(l))^{\vee})$, and we have that $\big(p_Z^* c(Bl(h))\big)^{\geq -1}=c(\tilde{h})$.
\end{proof}

\begin{lemma} \label{normal sheaf} $h^1/h^0(c(Bl(g))^\vee)$ contains the abelian cone stack 
\[
N_{\tildeMX\times\pp^1/\Def_{\tildeMP}\tildefrPic}|_{\BltildeMX^{\lambda}}
\]
associated to the normal sheaf of $\tildeMX\times\pp^1$ in $\Def_{\tildeMP}\tildefrPic$.
\end{lemma}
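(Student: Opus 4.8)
The plan is to express both sides via Construction~\ref{con:deformation} and then match them with a Four Lemma argument in the style of Kim--Kresch--Pantev~\cite{kkp}, exactly as in the derivation of the embedding $h^1/h^0(c(Bl(l))^\vee)\hookrightarrow h^1/h^0\bigl((c(Bl(h))^{\geq -1})^\vee\bigr)$ preceding Lemma~\ref{Bl(h) family} and in Lemma~\ref{Bl(tildeh)}.

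First I would record the two families over $\pp^1$. By the standard description of Kresch's deformation space and its relation to the cotangent complex (see~\cite{kkp},~\cite{eu}), the normal sheaf $N_{\tildeMX\times\pp^1/\Def_{\tildeMP}\tildefrPic}$ is obtained from $j^* L_{\tildeMP/\tildefrPic}\to L_{\tildeMX/\tildefrPic}$ (with $j\colon\tildeMX\hookrightarrow\tildeMP$ the closed embedding, and cone $L_{\tildeMX/\tildeMP}$) by the construction of \S\ref{deformations}: its general fibre is $\frN_{\tildeMX/\tildefrPic}$ and its fibre over $0$ is $j^*\frN_{\tildeMP/\tildefrPic}\oplus\frN_{\tildeMX/\tildeMP}$. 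On the other hand, $\BlF^\bullet=E^\bullet_{\BltildeMX/\tildefrPic}$ is a perfect dual obstruction theory for $\BltildeMX$ over $\tildefrPic$ (Proposition~\ref{definevirtualclasses}), and $c(Bl(g))$ is produced by Construction~\ref{con:deformation} from the dual of $\BlF^\bullet\to(\BlGlambda)^\bullet$ (morphism~\eqref{Bl F to G}); thus $h^1/h^0(c(Bl(g))^\vee)$ has general fibre $\frBlF=h^1/h^0((\BlF^\bullet)^\vee)$ and fibre over $0$ equal to $\frBlG^\lambda\oplus p_\pp^* N^\lambda$, where the last summand is the limit of the $N^\lambda$-directions in the sense of \S\ref{deformations}.

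Next I would build the embedding. Over $\BltildeMX^\lambda$ we have the obstruction-theory maps for $\BltildeMX/\tildefrPic$ and $\BltildeMP/\tildefrPic$, the blow-up comparison maps $p_\pp^* L_{\tildeMP/\tildefrPic}\to L_{\BltildeMP/\tildefrPic}$ and $p_X^* L_{\tildeMX/\tildefrPic}\to L_{\BltildeMX/\tildefrPic}$, and the morphism~\eqref{Bl G to L}, $(\BlGlambda)^\bullet\to E^\bullet_{\BltildeMP/\tildefrPic}$. These assemble into a comparison between the triangle $p_X^*\bigl(j^* L_{\tildeMP/\tildefrPic}\to L_{\tildeMX/\tildefrPic}\to L_{\tildeMX/\tildeMP}\bigr)$ and the triangle underlying $c(Bl(g))$. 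Applying \S\ref{deformations} to both and invoking the Four Lemma produces an embedding of cone stacks over $\BltildeMX^\lambda\times\pp^1$ from $N_{\tildeMX\times\pp^1/\Def_{\tildeMP}\tildefrPic}\big|_{\BltildeMX^\lambda}$ into $h^1/h^0(c(Bl(g))^\vee)$; concretely, over the general fibre this is the embedding $p_X^*\frN_{\tildeMX/\tildefrPic}\hookrightarrow\frBlF$ coming from the obstruction theory $\BlF^\bullet$, and over $0$ it is the embedding of $j^*\frN_{\tildeMP/\tildefrPic}$ into $\frBlG^\lambda$ (via~\eqref{Bl G to L}) together with the embedding of $\frN_{\tildeMX/\tildeMP}$ into the summand $p_\pp^* N^\lambda$ of the limit, the latter because $\pi_*f^*N$, with $h^0=N^\lambda$, is the obstruction theory of $\tildeMX/\tildeMP$.

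The step I expect to be the main obstacle is verifying that~\eqref{Bl G to L}, together with these maps, forms a morphism of distinguished triangles with the cohomological properties the Four Lemma needs. The subtlety is that $(\BlGlambda)^\bullet$ is \emph{not} itself a dual obstruction theory for $\BltildeMP/\tildefrPic$: by Lemma~\ref{lem:G} it is the cone of $p_\pp^*(N^\lambda)^\bullet\to\BlF^\bullet$, and it differs from $E^\bullet_{\BltildeMP/\tildefrPic}$ by the extra $N^\lambda$-directions, which is exactly why the summand $p_\pp^* N^\lambda$ appears on the special fibre of $h^1/h^0(c(Bl(g))^\vee)$. Keeping track of these directions is what Lemma~\ref{reduced obstruction} is for: the embedding $C^{X,\lambda}_{\BltildeMXP/\tildefrPic}\hookrightarrow E^{X,\lambda}_{\BltildeMP/\tildefrPic}\oplus\pi_*f^*N|_Q$ shows that, after restricting to the ghost component, these directions split off and are unobstructed relative to the reduced complex, which makes the morphism of triangles behave as required. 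Everything else is formal given \S\ref{deformations} and the Four Lemma.
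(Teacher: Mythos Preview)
Your overall strategy---identify the normal sheaf via \cite[Proposition~1]{kkp} and embed it using a Four Lemma argument on a morphism of distinguished triangles, parallel to the argument preceding Lemma~\ref{Bl(h) family}---is correct and is exactly the paper's approach. The paper's proof is three lines: identify the normal sheaf with $h^1/h^0(c(Bl(l))^\vee)$ by \cite[Proposition~1]{kkp}, write down the morphism of distinguished triangles from the $Bl(g)$-row to the $Bl(l)$-row over $\BltildeMX\times\pp^1$, and apply the Four Lemma.

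You take a detour that is not needed. Rather than working with $Bl(l)$ directly, you pass through the pulled-back non-blown-up triangle $p_X^*\bigl(j^* L_{\tildeMP/\tildefrPic}\to L_{\tildeMX/\tildefrPic}\to L_{\tildeMX/\tildeMP}\bigr)$ together with the blow-up comparison maps $p_\pp^* L_{\tildeMP/\tildefrPic}\to L_{\BltildeMP/\tildefrPic}$, presumably because the statement is phrased in terms of $\tildeMX$ and $\tildeMP$ rather than their $Bl$-versions. The paper simply invokes \cite[Proposition~1]{kkp} for $Bl(l)$, treating the restriction to $\BltildeMX^\lambda$ as the desired normal-sheaf cone stack; your route is not wrong, but it multiplies the bookkeeping for no gain.

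Your anticipated ``main obstacle'' is a red herring. You worry that $(\BlGlambda)^\bullet$ is not a dual obstruction theory for $\BltildeMP/\tildefrPic$ and propose invoking Lemma~\ref{reduced obstruction} to control the $N^\lambda$-directions. But Lemma~\ref{reduced obstruction} concerns the embedding of the cone $C^\lambda_{\BltildeMP/\tildefrPic}$ into the reduced bundle $E^{X,\lambda}_{\BltildeMP/\tildefrPic}$; that is a different issue and plays no role in this lemma. The Four Lemma argument works formally once the morphism of triangles is in place---the paper regards this as routine (``as before'') and does not unpack it further.
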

\begin{proof}
By \cite[Proposition~1]{kkp} we have that $N_{\tildeMX\times\pp^1/\Def_{\tildeMP}\tildefrPic}\simeq h^1/h^0(c(Bl(l))^{\vee})$. As before, there is a morphism of distinguished triangles
\[
\xymatrix{ \BlGlambda \otimes O_{\pp^1}(-1)\ar[r]^-{Bl(g)}\ar[d]& \BlGlambda\oplus (\BlF)^{\vee}\ar[r]\ar[d]&c(Bl(g))\ar[d]\\
j^*L_{\BltildeMP/\tildefrPic} \otimes O_{\pp^1}(-1)\ar[r]^-{Bl(l)}& j^* L_{\BltildeMP/\tildefrPic}\oplus L_{\BltildeMX/\tildefrPic}\ar[r] &c(Bl(l))
}
\]
over $\BltildeMX \times \pp^1$.  The Four Lemma implies the conclusion.
\end{proof}
\section{The main theorem}
\label{sec:main theorem}

Let $K^{\lambda}$, $\lambda \in I$, denote the vector bundle on $\BltildeMXP^\lambda$ given by the kernel of $p_{\pp}^* N^{\lambda}\to p_{\pp}^* \pi_*f^*N|_{Q_\lambda}$, where $Q_\lambda$ is a node on the contracted elliptic component $E$ that separates $E$ from a rational component $R$ of the curve.  Define:
\begin{align*}
  [\BltildeMX^0]^{\vv} &=  0^!_{p_X^*N^0}\left[C_{\BltildeMX^0/[\BltildeMP^0]^\vv} \right] \\
  [\BltildeMX^{\lambda}]^{\vv} &=  0^!_{K^\lambda}\left[C_{\BltildeMX^{\lambda}/[\BltildeMXP^{\lambda}]^{\vv}}\right] \\
  [\tildeMX^\lambda]^\vv &= p_{X*} [\BltildeMX^{\lambda}]^{\vv}
\end{align*}
We are now in a position to state and prove our main result.

\begin{theorem}  \label{main theorem} There is an equality
  \[ 
    [\BltildeMX]^{\vv}=[\BltildeMX^0]^{\vv} + \sum_{\lambda \in I} [\BltildeMX^{\lambda}]^{\vv} 
  \]
  in $A_*(\BltildeMX)$.
\end{theorem}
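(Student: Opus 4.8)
The plan is to compute $[\BltildeMX]^{\vv}$ inside the blown-up deformation space $\Def^\prime_{\BltildeMP}\tildefrPic$, read off the contribution of each component $\BltildeMX^{\lambda}$, $\lambda\in\{0\}\cup I$, and then show that the leftover ``correction'' classes cancel when we sum over $\lambda$.

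First I would realise $[\BltildeMX]^{\vv}$ as a limit class. Recall that $[\BltildeMX]^{\vv}=0^!_{\frBlF}[C_{\BltildeMX/\tildefrPic}]$, where $\frBlF=h^1/h^0(\BlF^{\bullet})$ is the vector bundle stack of the obstruction theory of Proposition~\ref{definevirtualclasses}(ii). Working inside $Z(X)\to\pp^1$ from the Cartesian diagram~\eqref{larger Cartesian diagram}, Lemma~\ref{Bl(tildeh)} gives an embedding $C_{Z(X)/\Def^\prime_{\BltildeMP}\tildefrPic}\hookrightarrow h^1/h^0(c(\tilde h)^{\vee})$. By Lemma~\ref{Bl(h) family} and the construction of $\Def^\prime_{\BltildeMP}\tildefrPic$, the target is $\frBlF\times\A^1$ over $\pp^1\setminus 0$, and the cone restricts there to $C_{\BltildeMX/\tildefrPic}\times\A^1$. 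Since the $\pp^1$-families of Construction~\ref{con:deformation} are trivial over $\A^1$, the deformation invariance of refined Gysin pullback shows that $0^!$ applied to the fibre $W$ of $C_{Z(X)/\Def^\prime_{\BltildeMP}\tildefrPic}$ over $0\in\pp^1$ also computes $[\BltildeMX]^{\vv}$.

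Next I would decompose $W$ by the components of $\BltildeMX$: $[W]=[W^0]+\sum_{\lambda\in I}[W^{\lambda}]$, where $W^0$ is supported on $\BltildeMX^0$ and $W^{\lambda}$ on $\BltildeMXP^{\lambda}$. By Lemma~\ref{Bl(h) family}, the blow-up of $\Def_{\BltildeMP}\tildefrPic$ along the loci $C_{\BltildeMP/\tildefrPic}\big|_{\BltildeMP^0\cap\BltildeMP^{\lambda}}$ ($\lambda\in I$), and the $[-1,0]$-truncation, the class $[W^0]$ lives in the vector bundle stack $\mathfrak{H}^0$ of the obstruction theory~\eqref{induced on main} --- which away from the blown-up locus is $p_X^*N^0\oplus\frE^0$ --- and $[W^{\lambda}]$ lives in the vector bundle stack $\mathfrak{H}^{\lambda}$ of~\eqref{induced on lambda}, assembled from $\frE^{X,\lambda}$ and $N^{\lambda}$ as in Lemma~\ref{reduced obstruction}. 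For each $\lambda\in\{0\}\cup I$ I would then run the Kim--Kresch--Pantev double-deformation argument~\cite{kkp} (compare Lemma~\ref{normal sheaf}) inside the double deformation space of $Z(X)\times\pp^1$ in $\Def^\prime_{\BltildeMP}\tildefrPic$. Because the compatible triple of dual obstruction theories only becomes a genuine distinguished triangle after passing to $\Def^\prime_{\BltildeMP}\tildefrPic$ and truncating, this comparison identifies $[W^{\lambda}]$ with $[C_{\BltildeMX^{\lambda}/C^{\lambda}_{\BltildeMP/\tildefrPic}}]$ only up to an error term $\epsilon^{\lambda}$ supported over the exceptional divisor $D_Z$, that is, over $\bigcup_{\mu\in I}\BltildeMX^0\cap\BltildeMX^{\mu}$ when $\lambda=0$ and over $\BltildeMX^0\cap\BltildeMX^{\lambda}$ when $\lambda\in I$:
\[
  [W^{\lambda}]=\bigl[C_{\BltildeMX^{\lambda}/C^{\lambda}_{\BltildeMP/\tildefrPic}}\bigr]+\epsilon^{\lambda}\qquad\text{in }A_*(\mathfrak{H}^{\lambda}).
\]

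It then remains to match $0^!$ of the first term with the classes defined just before the theorem and to cancel the $\epsilon^{\lambda}$. On the main component, $C^0_{\BltildeMP/\tildefrPic}\subset\frE^0$ with $0^!_{\frE^0}[C^0_{\BltildeMP/\tildefrPic}]=[\BltildeMP^0]^{\vv}$, so $C_{\BltildeMX^0/C^0_{\BltildeMP/\tildefrPic}}=C_{\BltildeMX^0/[\BltildeMP^0]^{\vv}}$ and $0^!_{\mathfrak{H}^0}\bigl[C_{\BltildeMX^0/C^0_{\BltildeMP/\tildefrPic}}\bigr]=0^!_{p_X^*N^0}\bigl[C_{\BltildeMX^0/[\BltildeMP^0]^{\vv}}\bigr]=[\BltildeMX^0]^{\vv}$; on a ghost component, Lemma~\ref{reduced obstruction} makes $C^{\lambda}_{\BltildeMP/\tildefrPic}$ a line bundle inside $\frE^{X,\lambda}$ whose $0^!$ is the class $0^!_{\frE^{X,\lambda}}[C^{X,\lambda}]$ of \S5.2 (i.e.\ $[\BltildeMXP^{\lambda}]^{\vv}$), and splitting off the summand $\pi_*f^*N|_{Q_\lambda}$ together with the rank count of Proposition~\ref{Nlambda} --- which forces the pullback bundle to be $K^{\lambda}$ rather than all of $N^{\lambda}$ --- gives $0^!_{\mathfrak{H}^{\lambda}}\bigl[C_{\BltildeMX^{\lambda}/C^{\lambda}_{\BltildeMP/\tildefrPic}}\bigr]=0^!_{K^{\lambda}}\bigl[C_{\BltildeMX^{\lambda}/[\BltildeMXP^{\lambda}]^{\vv}}\bigr]=[\BltildeMX^{\lambda}]^{\vv}$. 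Summing over $\lambda$ then yields $[\BltildeMX]^{\vv}=[\BltildeMX^0]^{\vv}+\sum_{\lambda\in I}[\BltildeMX^{\lambda}]^{\vv}+\sum_{\lambda\in\{0\}\cup I}0^!\epsilon^{\lambda}$, and the main obstacle is the vanishing of this last sum. The exceptional divisor $D_Z$ is a projective bundle over the disjoint union over $\lambda\in I$ of the (pullbacks of the) loci $C_{\BltildeMP/\tildefrPic}\big|_{\BltildeMP^0\cap\BltildeMP^{\lambda}}$, so $\epsilon^0=\sum_{\lambda\in I}\epsilon^0_{\lambda}$ with $\epsilon^0_{\lambda}$ supported over $\BltildeMX^0\cap\BltildeMX^{\lambda}$, and I would prove $0^!\epsilon^0_{\lambda}=-\,0^!\epsilon^{\lambda}$ for each $\lambda\in I$ by identifying both with the contribution of the common exceptional component of $D_Z$ --- using that over $\BltildeMX^0\cap\BltildeMX^{\lambda}$ the two truncated limit obstruction theories differ exactly in the $O(D_Z)$ direction that was blown up, with opposite signs coming from the two proper transforms glued along $D_Z$. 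Summing over $\lambda\in I$ kills all the error terms and completes the proof.
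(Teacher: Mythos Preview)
Your overall strategy matches the paper's: pass to the blown-up deformation space $\Def'_{\BltildeMP}\tildefrPic$, compare the general fibre $C_{\BltildeMX/\tildefrPic}$ with the special fibre, decompose the latter along components, and identify each piece with the virtual classes defined just above the theorem, leaving correction terms supported on the intersections $\BltildeMX^0\cap\BltildeMX^\lambda$.

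The gap is in your final paragraph: the cancellation does not work by the mechanism you describe. Your ``opposite signs coming from the two proper transforms glued along $D_Z$'' is not what happens, and a bare appeal to it would not close the argument. In the paper the special fibre $C_{Z(X)_0/\Def'_0}$ has \emph{three} kinds of pieces: those over $C^0_{\BltildeMP/\tildefrPic}$, those over $C^\lambda_{\BltildeMP/\tildefrPic}$, and a genuinely new piece $C_{D^\lambda_{Z(X)}/D^\lambda}$ over the exceptional divisor $D$ of $p\colon\Def'\to\Def$. The cancellation then rests on two separate lemmas.

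First (Lemma~\ref{D contribution}), the $D$-contribution $0^!_{p_Z^*N^\lambda\oplus p_Z^*\frBlG^\lambda}[C_{D^\lambda_{Z(X)}/D^\lambda}]$ vanishes after pushforward along $p_Z$ --- not by any sign cancellation, but by a straight dimension count: the virtual dimension of $D^\lambda$ equals $\dim\tildeMP$, while $p_Z(D^\lambda)$ has strictly smaller dimension, so $(p_Z)_*[D^\lambda]^{\vv}=0$.

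Second (Lemma~\ref{two bundles}), once the $D$-terms are gone, the global rational equivalence forces the main-side correction $[\Corr]=[C^0_{\limiting}]-[C_{\BltildeMX^0/C^0_{\BltildeMP/\tildefrPic}}]$ to equal the negative of the ghost-side discrepancy \emph{as cycles}. But the two sides are intersected with zero-sections of different vector bundle stacks --- $N'\oplus p_Z^*\frE^0$ on the main side versus $N^\lambda\oplus p_Z^*\frBlG^\lambda$ on the ghost side --- and one must verify these Gysin maps agree on $[\Corr]$. The paper does this by an explicit excess-bundle comparison of $\Cok^0$ and $\Cok^\lambda$ (Remark~\ref{define Cok}), using that $C^0\simeq C^\lambda$ on the intersection and unwinding the twist by $R^1\pi_*f^*N(-D_Z)$.

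Neither of these is a ``proper-transforms-with-opposite-sign'' argument; your last sentence needs to be replaced by these two computations.
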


\noindent This implies the promised decomposition of the virtual class on $\tildeMX$.

\begin{corollary} \label{splitting corollary} There is an equality
  \[ 
    [\tildeMX]^{\vv}=[\tildeMX^0]^{\vv} + \sum_{\lambda \in I} [\tildeMX^{\lambda}]^{\vv} 
  \]
  in $A_*(\tildeMX)$.
\end{corollary}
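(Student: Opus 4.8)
The plan is to derive Corollary~\ref{splitting corollary} from Theorem~\ref{main theorem} by pushing the equality forward along the proper morphism $p_X\colon\BltildeMX\to\tildeMX$. Applying $(p_X)_*$ to the identity of Theorem~\ref{main theorem} gives
\[
(p_X)_*[\BltildeMX]^{\vv}=(p_X)_*[\BltildeMX^0]^{\vv}+\sum_{\lambda\in I}(p_X)_*[\BltildeMX^{\lambda}]^{\vv}.
\]
By Lemma~\ref{pushforwards} the left-hand side equals $[\tildeMX]^{\vv}$, and for each $\lambda\in I$ the term $(p_X)_*[\BltildeMX^{\lambda}]^{\vv}$ is, by the very definition of $[\tildeMX^{\lambda}]^{\vv}$ recorded just before Theorem~\ref{main theorem}, equal to $[\tildeMX^{\lambda}]^{\vv}$. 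So everything comes down to the single identity
\[
(p_X)_*[\BltildeMX^0]^{\vv}=[\tildeMX^0]^{\vv},
\]
where the right-hand side is the reduced virtual class $j_{N^0}^![\tildeMP^0]$ of Definition~\ref{reduced GW}.

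To prove this last identity I would exploit the Cartesian square
\[
\xymatrix{
\BltildeMX^0\ar[r]\ar[d]&\BltildeMP^0\ar[d]^{p_{\pp}}\\
\tildeMX^0\ar[r]^-{j}&\tildeMP^0
}
\]
obtained by restricting to main components the square defining $\BltildeMX$ as the fibre product $\tildeMX\times_{\tildeMP}\BltildeMP$. On the main component the relative obstruction theory of $j$ is the genuine vector bundle $N^0$ of Proposition~\ref{Nlambda}, and since $p_X$ is a base change of $p_{\pp}$ and $\pi_*f^*N$ satisfies cohomology-and-base-change over $\tildeMX^0$, the corresponding obstruction theory of the top horizontal arrow is the pullback $p_X^*N^0$. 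By construction $[\BltildeMX^0]^{\vv}=0^!_{p_X^*N^0}[C_{\BltildeMX^0/[\BltildeMP^0]^{\vv}}]$ is the virtual pullback of the cycle $[\BltildeMP^0]^{\vv}$ along the top arrow, while $[\tildeMX^0]^{\vv}=j_{N^0}^![\tildeMP^0]$ is the virtual pullback of $[\tildeMP^0]$ along $j$. Virtual pullback by a vector bundle commutes with proper pushforward through such a Cartesian square, so
\[
(p_X)_*[\BltildeMX^0]^{\vv}=j_{N^0}^!\big((p_{\pp})_*[\BltildeMP^0]^{\vv}\big).
\]
Finally $(p_{\pp})_*[\BltildeMP^0]^{\vv}=[\tildeMP^0]$ is exactly equation~\eqref{MP0 excess}, established inside the proof of Lemma~\ref{bl to tilde} (using that $C^0_{\BltildeMP/\tildefrPic}$ is the unique component of $C_{\BltildeMP/\tildefrPic}$ supported on $\BltildeMP^0$ and that $\tildeMP^0$ is smooth by Proposition~\ref{main_is_smooth}). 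Combining the last two displays gives $(p_X)_*[\BltildeMX^0]^{\vv}=j_{N^0}^![\tildeMP^0]=[\tildeMX^0]^{\vv}$, as required.

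The bookkeeping with Lemma~\ref{pushforwards} and the ghost terms is routine; the real content is the main-component identity, and I expect that to be the main obstacle. The point that needs care is that $C_{\BltildeMX^0/[\BltildeMP^0]^{\vv}}$ must genuinely be the cone produced by the virtual-pullback formalism applied to the \emph{cycle} $[\BltildeMP^0]^{\vv}=\ctop(\Cok^0)\cap[\BltildeMP^0]$ of Remark~\ref{define Cok}, rather than to a mere fundamental class, so that the compatibility of refined Gysin homomorphisms with proper pushforward can be applied term by term in that decomposition. Once the cones are matched up, the remaining step — the classical compatibility of Gysin pullback by a vector bundle with proper pushforward in a Cartesian square — goes through, and the Corollary follows.
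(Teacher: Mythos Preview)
Your proposal is correct and follows essentially the same approach as the paper. The paper's proof is a one-line ``Combine Theorem~\ref{main theorem}, Lemma~\ref{bl to tilde}, and the definition of the virtual class on $\tildeMX^{\lambda}$,'' with the main-component identity $(p_X)_*[\BltildeMX^0]^{\vv}=[\tildeMX^0]^{\vv}$ spelled out in a Remark immediately afterward via exactly the Cartesian square and pullback--pushforward compatibility that you use; your invocation of equation~\eqref{MP0 excess} from the proof of Lemma~\ref{bl to tilde} and of Lemma~\ref{pushforwards} for the left-hand side is precisely what the paper has in mind.
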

\begin{proof}[Proof of Corollary~\ref{splitting corollary}]
 Combine Theorem~\ref{main theorem}, Lemma~\ref{bl to tilde}, and the definition of the virtual class on $\tildeMX^{\lambda}$.
\end{proof}

\begin{remark}
  We could also define $[\tildeMX^0]^\vv = p_{X*} [\BltildeMX^0]^{\vv}$, by analogy with the definition of $[\tildeMX^\lambda]^\vv$.  Since pullback commutes with pushforward in the Cartesian diagram
  \[
    \xymatrix{
      \BltildeMX^0 \ar[r] \ar[d]_{p_X} &  \BltildeMP^0 \ar[d]^{p_{\pp}}  \\
      \tildeMX^0 \ar[r]  &  \tildeMP^0 
    }
  \] 
  we see that this agrees with the Vakil--Zinger definition 
  \[
    [\tildeMX^0]^{\vv}=0_{N^0}^![C_{\tildeMX^0/\tildeMP^0}].
  \]
\end{remark}

\subsection{Proof of Theorem~\ref{main theorem}}

Consider now
\[
\DD' := \Def_{Z(X)} \Def^\prime_{\BltildeMP}\tildefrPic.
\]
By construction we have a morphism $\DD^\prime \to \pp^1 \times \pp^1$.  Restricting to zero we get a morphism $C_{Z(X)/\Def^\prime_{\BltildeMP}\tildefrPic}\to \pp^1$. This morphism may not be flat; it has general fiber $C_{\BltildeMX/\tildefrPic}$. Restricting in the other direction we get a flat morphism 
\[
\Def_{Z(X)_0} \Def^\prime_0 \to\pp^1
\]
where $\Def^\prime_0$ and $Z(X)_0$ are the fibres over $0 \in \pp^1$ of, respectively, $\Def^\prime_{\BltildeMP}\tildefrPic$ and $Z(X)$ in the following diagram:
\[
\xymatrix{
  Z(X) \ar[rr]^-{p_{Z(X)}} \ar[d] && \BltildeMX \times \pp^1 \ar[d] \\
  \Def^\prime_{\BltildeMP}\tildefrPic \ar[rr]^-p \ar[rd] &&
  \Def_{\BltildeMP}\tildefrPic \ar[ld] \\
  & \pp^1
}
\]
The fibre $\Def^\prime_0$ is the union of $C_{\BltildeMP/\tildefrPic}$ and the exceptional divisor $D$ for~$p$.  Commutativity of intersection with divisors gives that 
\[
[C_{\BltildeMX/\tildefrPic}]=[C_{Z(X)_0/\Def^\prime_0}]
\]
in $A_*(\DD^\prime)$.

We now write the fibre $\Def^\prime_0$ as a union of components.  As $\BltildeMP$ is a union of components 
\[
\BltildeMP^0\cup \bigcup_{\lambda \in I} \BltildeMP^{\lambda}
\]
the cone $C_{\BltildeMP/\tildefrPic}$ is also a union of components, which are supported on the components of $\BltildeMP$. We write 
\[
C_{\BltildeMP/\tildefrPic}=C_{\BltildeMP/\tildefrPic}^0\cup \bigcup_{\lambda \in I} C_{\BltildeMP/\tildefrPic}^{\lambda}.
\]
This splitting is unique as $C_{\BltildeMP/\tildefrPic}$ does not have components supported at intersections of components of $\BltildeMP$: this is clear from the Hu--Li local equations~\cite[Theorem~2.19]{huli}; cf.~\cite[Example~3.4]{bcm}.  Thus
\[
\Def^\prime_0 = C_{\BltildeMP/\tildefrPic}^0\cup \bigcup_{\lambda \in I} C_{\BltildeMP/\tildefrPic}^{\lambda} \cup D.
\]

This decomposition induces a decomposition of the cone $C_{Z(X)_0/\Def^\prime_0}$ as a union of components.  Write $D^\lambda$ for the union of components of $D$ that lie over $\BltildeMX^\lambda$. Consider the fiber product
\[
  D^\lambda_{Z(X)} := Z_X \times_{\Def^\prime_{\BltildeMP}\tildefrPic} D^\lambda.
\]
This sits in a Cartesian diagram
\begin{equation}
  \label{largest Cartesian diagram}
  \begin{aligned}
    \xymatrix{
      D^\lambda_{Z(X)} \ar[d]_{i_{Z(X)}} \ar[r] & D^\lambda_Z \ar[d] \ar[r] & D^\lambda \ar[d] \\
      Z(X) \ar[r] \ar[d]_{p_{Z(X)}} & Z \ar[r] \ar[d]_{p_Z} & \Def^\prime_{\BltildeMP}\tildefrPic \ar[d]^{p} \\
      \BltildeMX \times \pp^1 \ar[r] & \BltildeMXP \times \pp^1 \ar[r] & \Def_{\BltildeMP}\tildefrPic.
    }
  \end{aligned}
\end{equation}
where the lower part is diagram \eqref{larger Cartesian diagram}.  Note that if we replaced $D^\lambda$ in this diagram by another component $C_{\BltildeMP/\tildefrPic}^{\lambda}$ of $\Def^\prime_0$, $\lambda \in \{0\} \cup I$, then the corresponding fiber product would just be $\BltildeMX^\lambda$.  Thus
\[
  C_{Z(X)_0/\Def^\prime_0} = 
  C_{\BltildeMX^0/C^0_{\BltildeMP/\tildefrPic}} \cup
  \bigcup_{\lambda\in I} C_{\BltildeMX^\lambda/C^{\lambda}_{\BltildeMP/\tildefrPic}} \cup
  \bigcup_{\lambda\in I} C_{D^\lambda_{Z(X)}/D^\lambda}.
\]

Each of these components of $C_{Z(X)_0/\Def^\prime_0}$ embeds into a vector bundle stack.
Recall that
\begin{align*}
  C_{\BltildeMX^0/C^0_{\BltildeMP/\tildefrPic}} \hookrightarrow C_{\BltildeMX^0/\BltildeMP^0}\times C^0_{\BltildeMP/\tildefrPic}|_{\BltildeMX}
  & \hookrightarrow N^0\oplus \frE^0\\
  \intertext{and}
  C_{\BltildeMX^\lambda/C^{\lambda}_{\BltildeMP/\tildefrPic}} \hookrightarrow C_{\BltildeMX^{\lambda}/\BltildeMP^{\lambda}}\times C^{\lambda}_{\BltildeMP/\tildefrPic}|_{\BltildeMX} 
  & \hookrightarrow N^{\lambda}\oplus \frE^{X,\lambda};
\end{align*}
here we used Lemma~\ref{reduced obstruction}.  Furthermore
\[
  C_{D^\lambda_{Z(X)}/D^\lambda} \hookrightarrow i_{Z(X)}^* \, p_{Z(X)}^* \left( N^{\lambda}\oplus \frE^{X,\lambda} \right), 
\]
since
\begin{align*}
  C_{D^\lambda_{Z(X)}/D^\lambda} & \hookrightarrow i_{Z(X)}^* \, p_{Z(X)}^* \left(C_{\BltildeMX \times \pp^1/\Def_{\BltildeMP} \tildefrPic} \right) && \text{by diagram \eqref{largest Cartesian diagram}} \\
  & \hookrightarrow i_{Z(X)}^* \, p_{Z(X)}^* \left(N_{\BltildeMX \times \pp^1/\Def_{\BltildeMP} \tildefrPic} \right) && \text{by definition} \\
  & \hookrightarrow i_{Z(X)}^* \, p_{Z(X)}^* h^1/h^0(c(Bl(g))^\vee) && \text{by Lemma~\ref{normal sheaf}}
\end{align*}
and the fiber of $h^1/h^0(c(Bl(g))^\vee)$ over $\BltildeMX^\lambda \times \{0\}$ is $N^{\lambda}\oplus \frE^{X,\lambda}$.  

We now look at embeddings of families of cones in vector bundles. For this we write $C_X = C_{\BltildeMX/\tildefrPic}$ as a union of components $C_X^0\cup \bigcup_{\lambda \in I} C_X^{\lambda}$ such that $C_X^0$ is supported on the main component $\BltildeMX^0$, $C_X^{\lambda}$ is supported on the ghost component $\BltildeMX^{\lambda}$, and $[C_X]=[C_X^0]+\sum_{\lambda\in I}[C_X^{\lambda}]$ in $A_*(C_X)$. Such expressions are not unique, as $C_X$ may have components supported on $\BltildeMX^0\cap \BltildeMX^{\lambda}$. After making such a choice we have
\begin{multline*}
  [C_X^0]+\sum_{\lambda \in I} [C_X^{\lambda}]= \\
  [C_{\BltildeMX^0/C^0_{\BltildeMP/\tildefrPic}}]+
  \sum_{\lambda \in I}[C_{\BltildeMX^{\lambda}/C^{\lambda}_{\BltildeMP/\tildefrPic}}]+ 
  \sum_{\lambda \in I} [C_{D^\lambda_{Z(X)}/D^\lambda}]
\end{multline*}
in $A_*(\DD^\prime)$. Suppose that $C_X^0$ deforms to $C^0_{\limiting}$ in $\DD^\prime$, and that $C^{\lambda}$ deforms to $C^{\lambda}_{\limiting}$ in $\DD^\prime$; here $C^{0}_{\limiting}$ and $C^{\lambda}_{\limiting}$ can be unions of components. Then
\begin{multline}
  \label{eq:support} [C^0_{\limiting}]-[C_{\BltildeMX^0/C^0_{\BltildeMP/\tildefrPic}}]=\\
  \sum_{\lambda \in I} [C_{\BltildeMX^{\lambda}/C^{\lambda}_{\BltildeMP/\tildefrPic}}] -
  \sum_{\lambda \in I} [C^{\lambda}_{\limiting}] + 
  \sum_{\lambda \in I} [C_{D^\lambda_{Z(X)}/D^\lambda}]
\end{multline}
in $A_*(\DD^\prime)$. Denote 
\[
 [C_{\limiting}^0]-[C_{\BltildeMX^0/C^0_{\BltildeMP/\tildefrPic}}]
\]
by $[\Corr]$ and note that by \eqref{eq:support} we may assume that $[\Corr]$ is supported on the intersection of the main component with the ghost components. Note that $\Corr$ may not be an effective cycle. 

Consider the exact sequence 
\[
\pi_*f^*N(\tilde{A})\to \pi_*f^*N(\tilde{A})|_{\tilde{A}}\to R^1\pi_*f^*N \to 0
\]
on $\BltildeMX \times \pp^1$, and pull it back to $Z$ to obtain an exact sequence
\begin{equation}\label{seq: tilde a}
p_Z^*\pi_*f^*N(\tilde{A})\to p_Z^*\pi_*f^*N(\tilde{A})|_{\tilde{A}}\to p_Z^*(R^1\pi_*f^*N) \to 0.
\end{equation}
Denote the kernel of the left-hand map by $N'$. In the following we show that $N'$ is a vector bundle on $\BltildeMX^0$. Restricting to the main component, we see that $p_Z^*(R^1\pi_*f^*N)$ is supported on the divisor $D_Z$.  Locally, the complex 
\[
[p_Z^*\pi_*f^*N(\tilde{A})\to p_Z^*\pi_*f^*N(\tilde{A})|_{\tilde{A}}],
\]
 is quasi-isomorphic to
\[
[p_Z^*\pi_*f^*N(A)\to p_Z^*\pi_*f^*N(A)|_A]
\]
and the map factors as 
\[
p_Z^*\pi_*f^*N(A)\to p_Z^*\pi_*f^*N(A)|_A \otimes O(-D_Z) \to p_Z^*\pi_*f^*N(A)|_A
\]
where the right-hand map is multiplication by $D_Z$ and the left-hand map is surjective. Since locally $N'$ is the kernel of a surjective map to a line bundle, it is a vector bundle on $\BltildeMX^0$.

The fiber of $h^1/h^0(c(\tilde{h})^\vee)$ over $0 \in \pp^1$ is $N^{\prime}\oplus  p_Z^*\frE^0$.  Lemma~\ref{Bl(tildeh)} therefore implies that $C_{\limiting}^0\hookrightarrow N^{\prime}\oplus  p_Z^*\frE^0$. This gives a class 
\[
[\Corr]^{\vv}:=0^!_{N^{\prime}\oplus  p_Z^*\frE^0} [\Corr]
\]
and we get
\begin{align*}
  [\Corr]^\vv &= 
  0^!_{N^{\prime}\oplus  p_Z^*\frE^0} \left[C^0_{\limiting}\right] - 0^!_{N^{\prime}\oplus  p_Z^*\frE^0} \left[C_{\BltildeMX^0/C^0_{\BltildeMP/\tildefrPic}}\right] \\
  &= 0^!_{h^1/h^0(E_{\BltildeMX/\tildefrPic})} \left[C^0_X\right]  - 0^!_{N^{\prime}} \left[C_{\BltildeMX^0/[\BltildeMP^0]^\vv} \right] 
\intertext{by deformation invariance (for the first term) and the definition of the virtual class on $\BltildeMP^0$ (for the second term)}
  &= 0^!_{h^1/h^0(E_{\BltildeMX/\tildefrPic})} \left[C^0_X\right] - 0^!_{p_{\pp}^* N^0} \left[C_{\BltildeMX^0/[\BltildeMP^0]^\vv} \right] \\ 
\intertext{because $N'$ restricts to $p_{\pp}^* N^0$ along $\BltildeMX^0$}
  &= 0^!_{h^1/h^0(E_{\BltildeMX/\tildefrPic})} \left[C^0_X\right] - \left[\BltildeMX^0\right]^\vv.
\end{align*}
Thus $C_X^0$ contributes to $[\BltildeMX]^{\vv}$ the class $[\BltildeMX^0]^{\vv}+[\Corr]^{\vv}$.

Now fix $\lambda \in I$.  The vector bundle stack $p_{Z}^* \, h^1/h^0(c(Bl(g))^\vee)$ on $Z$ has fiber over $0 \in \pp^1$ equal to $N^{\lambda}\oplus p_Z^* \frBlG^{\lambda}$ and general fiber equal to $p_Z^* \, h^1/h^0(\BlF^{\bullet})$.  Lemma~\ref{normal sheaf} thus implies that $C^\lambda_{\limiting} \hookrightarrow N^{\lambda}\oplus p_Z^* \frBlG^{\lambda}$, and we have
\[
\left[C^\lambda_{\limiting}\right] = \left[C^\lambda_X\right]
\]
 in $A_*(\DD^\prime)$.  From \eqref{eq:support} again we have that
\[
  \left[ \Corr \right] = 
  \sum_{\lambda \in I} \left[C_{\BltildeMX^{\lambda}/C^{\lambda}_{\BltildeMP/\tildefrPic}}\right] -
  \sum_{\lambda \in I} \left[C^{\lambda}_{\limiting}\right] + 
  \sum_{\lambda \in I} \left[C_{D^\lambda_{Z(X)}/D^\lambda}\right]
\]
and Lemma~\ref{two bundles} implies that
\begin{multline}
  \label{corr again}
  \left[ \Corr \right]^\vv  = 
  \sum_{\lambda \in I} 0^!_{N^{\lambda}\oplus p_Z^* \frBlG^{\lambda}}\left[C_{\BltildeMX^{\lambda}/C^{\lambda}_{\BltildeMP/\tildefrPic}}\right] 
  \\ 
   - \sum_{\lambda \in I} 0^!_{N^{\lambda}\oplus p_Z^* \frBlG^{\lambda}}\left[C^{\lambda}_{\limiting}\right] + 
  \sum_{\lambda \in I} 0^!_{N^{\lambda}\oplus p_Z^* \frBlG^{\lambda}}\left[C_{D^\lambda_{Z(X)}/D^\lambda}\right]
\end{multline}
The first summand on the right-hand side of \eqref{corr again} is
\begin{equation}
  \label{first summand}
  0^!_{K^{\lambda}\oplus p_Z^* \frE^{X, \lambda}}\left[C_{\BltildeMX^{\lambda}/C^{\lambda}_{\BltildeMXP/\tildefrPic}}\right] 
\end{equation}
where $K^\lambda$ was defined just above Theorem~\ref{main theorem}.  Here we used the fact that the difference between $C^{\lambda}_{\BltildeMXP/\tildefrPic}$ and $C^{\lambda}_{\BltildeMP/\tildefrPic}$, which is a vector bundle, coincides with the difference between $p_Z^* \frE^{\lambda}$ and $p_Z^* \frE^{X,\lambda}$; note that
 $p_Z^* \frE^{X,\lambda}$ and $p_Z^* \frBlG^{\lambda}$ coincide.  The local model here is~\cite[Example~3.12(a)]{bcm}.  Arguing as in the $\lambda=0$ case, \eqref{first summand} is
 \[
0^!_{K^{\lambda}}\left[C_{\BltildeMX^{\lambda}/[\BltildeMXP^\lambda]^\vv}\right].
\]
Furthermore, Lemma \ref{D contribution} implies that the third sum in \eqref{corr again} vanishes.  It follows that $\bigcup_{\lambda \in I} C_X^{\lambda}$ contributes to $[\BltildeMX]^{\vv}$ the class 
\[
\sum_{\lambda \in I} [\BltildeMX^{\lambda}]^{\vv} -[\Corr]^{\vv}.
\]
Adding this to the contribution from $C_X^0$ proves Theorem~\ref{main theorem}. 

\begin{lemma}\label{two bundles} We have that
\[
[ \Corr]^\vv =  0^!_{N^{\lambda}\oplus p_Z^* \frBlG^{\lambda}}[\Corr].
  \]
\end{lemma}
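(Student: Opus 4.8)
The plan is to realise the two vector bundle stacks $N^{\prime}\oplus p_Z^*\frE^0$ and $N^{\lambda}\oplus p_Z^*\frBlG^{\lambda}$ as the special fibres of two degenerations of the common ambient stack $p_Z^*\frBlF$, and then to deduce the equality of refined Gysin maps from deformation invariance. First I would localise the statement: by \eqref{eq:support} the cycle $[\Corr]$ is supported on $\BltildeMX^0\cap\bigcup_{\lambda\in I}\BltildeMX^{\lambda}$, and by the Hu--Li local equations~\cite[Theorem~2.19]{huli} (cf.~\cite[Example~3.4]{bcm}) neither $C_{\BltildeMX/\tildefrPic}$ nor any of the cones occurring in \eqref{eq:support} has a component over a triple intersection $\BltildeMX^0\cap\BltildeMX^{\lambda}\cap\BltildeMX^{\mu}$ with $\lambda\neq\mu$. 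Hence it suffices to fix $\lambda$ and to treat the part $[\Corr_\lambda]$ of $[\Corr]$ supported on $W_\lambda:=\BltildeMX^0\cap\BltildeMX^{\lambda}$, after which one sums over $\lambda\in I$.

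The second step is the numerical bookkeeping. By Lemma~\ref{Bl(tildeh)} the stack $h^1/h^0(c(\tilde h)^\vee)$ over $Z$ has general fibre $p_Z^*\frBlF$ and special fibre $N^{\prime}\oplus p_Z^*\frE^0$, whereas $p_Z^*\,h^1/h^0(c(Bl(g))^\vee)$ has general fibre $p_Z^*\frBlF$ and special fibre $N^{\lambda}\oplus p_Z^*\frBlG^{\lambda}$. A rank count — using $\operatorname{rk}N^{\prime}|_{W_\lambda}=\beta\cdot\deg X$ and $\operatorname{rk}N^{\lambda}=\beta\cdot\deg X+\codim X$ from Proposition~\ref{Nlambda}, the identification $\frBlG^0\cong\frE^0$ over the main component coming from \eqref{Bl G to L}, the identification $p_Z^*\frBlG^{\lambda}\cong p_Z^*\frE^{X,\lambda}$ used just after \eqref{first summand}, and the fact that the degenerations of \S\ref{deformations} preserve rank — confirms that the two special fibres have the same rank over $W_\lambda$, which is the compatibility needed for the two Gysin maps even to have a chance of agreeing.

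The heart of the argument, and the step I expect to be hardest, is to show that $[\Corr_\lambda]$ occurs, in both degenerations, as the limit of one and the same cycle in $p_Z^*\frBlF$. Using \eqref{eq:support} and the Kim--Kresch--Pantev identification $[C_0]=[C_{X/C_{Y/Z}}]$ of the two fibres of the relevant double deformation space, one expresses $[\Corr_\lambda]$ as a linear combination of $[C_{\BltildeMX^{\lambda}/C^{\lambda}_{\BltildeMP/\tildefrPic}}]$, $[C^{\lambda}_{\limiting}]$ and $[C_{D^{\lambda}_{Z(X)}/D^{\lambda}}]$, each of which is by construction the limit of a cone in $p_Z^*\frBlF$; these cones assemble into a single cycle $[\widetilde{\Corr}_\lambda]$ over $W_\lambda\times\A^1$ whose limits in the two special fibres are both $[\Corr_\lambda]$, so that deformation invariance of refined Gysin maps gives
\[ 0^!_{N^{\prime}\oplus p_Z^*\frE^0}[\Corr_\lambda]=0^!_{p_Z^*\frBlF}[\widetilde{\Corr}_\lambda]=0^!_{N^{\lambda}\oplus p_Z^*\frBlG^{\lambda}}[\Corr_\lambda]. \]
The obstacle here is that $[\Corr_\lambda]$ is a difference of cones supported only on $W_\lambda$, not an honest cone over all of $\BltildeMX$, so matching the two degenerations over $W_\lambda$ requires an explicit isomorphism extracted from the Hu--Li charts; concretely, one must check that moving the rank-$\codim X$ excess factor between the $N$--summand and the obstruction summand — the passage between the description involving $\pi_*f^*N|_{Q_\lambda}$ and the one involving $R^1\pi_*f^*N$ — is realised by that isomorphism. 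The relevant local model is \cite[Example~3.12(a)]{bcm}, which also underlies the reduction of \eqref{first summand} in the main proof.
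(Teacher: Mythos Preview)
Your approach differs substantially from the paper's. The paper does \emph{not} use deformation invariance here at all: it computes both Gysin maps directly as excess-intersection classes. Since $\Corr$ actually sits inside $N'\oplus p_Z^*C^0$ (resp.\ inside $N^{\lambda}\oplus p_Z^*C^{\lambda}$), the Gysin map from the ambient bundle stack factors through intersection with $\ctop(p_Z^*\Cok^0)$ (resp.\ $\ctop(p_Z^*\Cok^{\lambda})$) followed by the Gysin map from $N'$ (resp.\ $N^{\lambda}$). The paper then compares these two expressions by the explicit Chern-class identity
\[
\ctop(p_Z^*\Cok^{\lambda})\cdot\ctop\!\bigl(p_Z^*R^1\pi_*f^*N(-D_Z)\bigr)=\ctop(p_Z^*\Cok^{0})
\]
together with $C^0\simeq C^{\lambda}$ on the intersection of components. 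This is a short, concrete computation; no families over $\pp^1$ enter.

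Your proposal, by contrast, relies on producing a single class $[\widetilde{\Corr}_\lambda]$ in the common general fibre $p_Z^*\frBlF$ whose limit in \emph{both} degenerations is $[\Corr_\lambda]$. This is the entire content of the lemma, and you do not construct it. The terms you list --- $[C_{\BltildeMX^{\lambda}/C^{\lambda}_{\BltildeMP/\tildefrPic}}]$, $[C^{\lambda}_{\limiting}]$, $[C_{D^{\lambda}_{Z(X)}/D^{\lambda}}]$ --- are limits taken in the $Bl(g)$-degeneration, whereas on the other side $[\Corr]$ is expressed via $[C^0_{\limiting}]$, a limit in the $\tilde h$-degeneration. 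These come from \emph{different} general-fibre cycles ($C_X^{\lambda}$ versus $C_X^{0}$), so there is no a priori reason a common lift exists; the equality \eqref{eq:support} is in $A_*(\DD')$, not an identity of cycles in a single general fibre. You acknowledge this obstacle yourself and defer to ``an explicit isomorphism extracted from the Hu--Li charts'', but that is precisely what must be supplied, and it is not clear the deformation-invariance strategy makes this any easier than the paper's direct excess computation.
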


\begin{proof} 
By definition we have 
\[
[ \Corr]^\vv =0^!_{N'\oplus  p_Z^*\frE^0} [\Corr].
\]
Since $\Corr$ embeds in $N'\oplus p_Z^*C^0$ and $p_Z^*C^0$ embeds in $p_Z^*\frE^0$, there is  a factorisation
\[
\Corr \hookrightarrow N'\oplus p_Z^*C^0\hookrightarrow N'\oplus p_Z^*\frE^0.
\] 
From the Cartesian diagram
\[\xymatrix {\Corr \cap (C^0\cup D_Z)\ar[r]\ar[d]&N'\ar[d]\ar[r]&N'\ar[d]\\
\Corr\ar[r]& N'\oplus C^0\cup D_Z\ar[r]&N'\oplus p_Z^*\frE^0
}
\]
we get that 
\[
[\Corr]^{\vv}=0^!_{N'}0^!_{C^0\cup D_Z}\left([\Corr]\cdot  \ctop (p_Z^*\Cok^0)\right)
\]
where $\Cok^0$ was defined in Remark~\ref{define Cok}.  

We now compute $0^!_{N^{\lambda}\oplus p_Z^* \frBlG^{\lambda}}[\Corr]$. There is a Cartesian diagram
\[\xymatrix {\Corr\ \cap (C^{\lambda}\cup D_Z)\ar[r]\ar[d]&N^{\lambda}\ar[d]\ar[r]&N^{\lambda}\ar[d]\\
\Corr\ar[r]& N^{\lambda}\oplus C^{\lambda}\cup D_Z\ar[r]&N^{\lambda}\oplus p_Z^*\frE^{X,\lambda}
}
\]
and so
\begin{align*}0^!_{N^{\lambda}\oplus p_Z^* \frE^{X,\lambda}}[\Corr] &=
0^!_{N^{\lambda}}0^!_{C^{\lambda}\cup D_Z}\left([\Corr]\cdot  \ctop (p_Z^*\Cok^{\lambda})\right)\\
&=0^!_{N'}0^!_{C^{\lambda}\cup D_Z}\left([\Corr]\cdot  \ctop (p_Z^*\Cok^{\lambda})\cdot \ctop(p_Z^*R^1\pi_*f^*N(-D_Z))\right)\\
&=0^!_{N'}0^!_{C^{\lambda}\cup D_Z}\left([\Corr]\cdot  \ctop (p_Z^*\Cok^{0})\right). 
\end{align*}
On the intersection of the main component with the ghost components we have that $C^0\simeq C^{\lambda}$. The result follows.
\end{proof}

\begin{lemma}\label{D contribution} For any $\lambda \in I$ we have that
\[   (p_Z)_* \left(0^!_{p_Z^*N^{\lambda}\oplus p_Z^* \frBlG^{\lambda}} \left[C_{D^\lambda_{Z(X)}/D^\lambda}\right]\right)=0.
\]
\end{lemma}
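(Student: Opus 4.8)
The plan is to exhibit $D^\lambda_{Z(X)}$ as a $\pp^1$-bundle over the intersection $\BltildeMX^0\cap\BltildeMX^{\lambda}$, to show that both the cone $C_{D^\lambda_{Z(X)}/D^\lambda}$ and the vector bundle stack $p_Z^*N^{\lambda}\oplus p_Z^*\frBlG^{\lambda}$ are pulled back from that base, and then to conclude from the vanishing $q_*q^*=0$ for a $\pp^1$-bundle projection $q$.

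First I would analyse the blow-up center and base change it to $Z(X)$.  Recall that $D^\lambda$ is the component, lying over $\BltildeMX^{\lambda}$, of the exceptional divisor of the blow-up of $\Def_{\BltildeMP}\tildefrPic$ along $W^\lambda:=C_{\BltildeMP/\tildefrPic}\big|_{\BltildeMP^0\cap\BltildeMP^{\lambda}}$.  Working in the Hu--Li local charts~\cite[Theorem~2.19]{huli}, and using the isomorphism $C^0\simeq C^{\lambda}$ along $\BltildeMP^0\cap\BltildeMP^{\lambda}$ noted in the proof of Lemma~\ref{two bundles}, I would check that $W^\lambda$ is regularly embedded of codimension two in $\Def_{\BltildeMP}\tildefrPic$, so that the structure morphism $\bar p^\lambda\colon D^\lambda\to W^\lambda$ is a $\pp^1$-bundle.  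By Remark~\ref{section} the morphism $\BltildeMX\times\pp^1\to\Def_{\BltildeMP}\tildefrPic$ in diagram~\eqref{largest Cartesian diagram} restricts over $0\in\pp^1$ to the zero section of $C_{\BltildeMP/\tildefrPic}$ precomposed with $j$; hence the preimage of $W^\lambda$ is $(\BltildeMX^0\cap\BltildeMX^{\lambda})\times\{0\}$, mapping isomorphically onto a substack $B^\lambda\hookrightarrow W^\lambda$.  Chasing diagram~\eqref{largest Cartesian diagram} then identifies $D^\lambda_{Z(X)}$ with $B^\lambda\times_{W^\lambda}D^\lambda$, the pullback $\pp^1$-bundle; write $q\colon D^\lambda_{Z(X)}\to\BltildeMX^0\cap\BltildeMX^{\lambda}$ for its structure map.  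By construction the restriction of $p_Z$ to $D^\lambda_{Z(X)}$ is $q$ followed by the closed immersion $(\BltildeMX^0\cap\BltildeMX^{\lambda})\times\{0\}\hookrightarrow\BltildeMXP\times\pp^1$.

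Finally I would assemble the conclusion.  The closed immersion $D^\lambda_{Z(X)}\hookrightarrow D^\lambda$ is the base change of $B^\lambda\hookrightarrow W^\lambda$ along the flat morphism $\bar p^\lambda$, so flat base change for normal cones gives $C_{D^\lambda_{Z(X)}/D^\lambda}\simeq q^*C_{B^\lambda/W^\lambda}$.  Since $p_Z^*N^{\lambda}\oplus p_Z^*\frBlG^{\lambda}$ is pulled back from $\BltildeMXP\times\pp^1$ and $q$ factors the relevant map, its restriction to $D^\lambda_{Z(X)}$ is $q^*$ of a vector bundle stack on $\BltildeMX^0\cap\BltildeMX^{\lambda}$.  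Compatibility of the refined Gysin pullback with flat pullback then yields
\[
  0^!_{p_Z^*N^{\lambda}\oplus p_Z^*\frBlG^{\lambda}}\left[C_{D^\lambda_{Z(X)}/D^\lambda}\right]=q^*\left(0^!_{N^{\lambda}\oplus\frBlG^{\lambda}}\left[C_{B^\lambda/W^\lambda}\right]\right),
\]
a class pulled back along $q$.  As $q$ has relative dimension one we have $q_*q^*=0$ on Chow groups, and since the pushforward along $p_Z$ of a cycle supported on $D^\lambda_{Z(X)}$ factors through $q_*$, the Lemma follows.

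The main obstacle is the first step: verifying, in the Hu--Li charts, that the blow-up center $W^\lambda$ is regularly embedded of codimension two, which is what makes $D^\lambda$ an honest $\pp^1$-bundle and $C_{D^\lambda_{Z(X)}/D^\lambda}$ a genuine flat pullback.  Should the center fail to be a local complete intersection, one can fall back on a dimension count instead: $D^\lambda_{Z(X)}$ still maps to $(\BltildeMX^0\cap\BltildeMX^{\lambda})\times\{0\}$ with positive-dimensional fibres, and the cycle $0^!_{p_Z^*N^{\lambda}\oplus p_Z^*\frBlG^{\lambda}}[C_{D^\lambda_{Z(X)}/D^\lambda}]$ has dimension strictly larger than $\dim(\BltildeMX^0\cap\BltildeMX^{\lambda})$, so its pushforward vanishes for dimension reasons.
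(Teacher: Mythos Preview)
Your approach is genuinely different from the paper's.  The paper does not attempt to exhibit any $\pp^1$-bundle structure on $D^\lambda_{Z(X)}$.  Instead it factors the Gysin map via functoriality of virtual pull-backs: setting $[D^\lambda_Z]^{\vv}=\ctop(p_Z^*\Cok^{\lambda})\cdot[D^\lambda_Z]$, one has
\[
0^!_{p_Z^*N^{\lambda}\oplus p_Z^*\frBlG^{\lambda}}\big[C_{D^\lambda_{Z(X)}/D^\lambda}\big]=0^!_{p_Z^*N^{\lambda}}\big[C_{D^\lambda_{Z(X)}/[D^\lambda_Z]^{\vv}}\big],
\]
and then compatibility of Gysin with proper push-forward reduces the Lemma to $(p_Z)_*[D^\lambda_Z]^{\vv}=0$, which follows from the dimension count $\vdim D^\lambda_Z=\dim\tildeMP>\dim p_Z(D^\lambda_Z)$.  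This avoids any structural analysis of the exceptional divisor.

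Your main line has a real risk at exactly the point you flag.  The ambient $\Def_{\BltildeMP}\tildefrPic$ is built from the reducible $\BltildeMP$ and is not obviously smooth along the special fibre, so there is no reason to expect $W^\lambda$ to be a regular codimension-two embedding; without that, $D^\lambda\to W^\lambda$ need not be a $\pp^1$-bundle and the flat-base-change identification $C_{D^\lambda_{Z(X)}/D^\lambda}\simeq q^*C_{B^\lambda/W^\lambda}$ is not available.  Your fallback dimension count is morally the paper's argument, but as written it asks you to compute the dimension of the \emph{final} class, which entails knowing the virtual rank of $\frBlG^{\lambda}$.  The paper's trick---push forward the intermediate class $[D^\lambda_Z]^{\vv}$ first---sidesteps that computation entirely, since $\Cok^\lambda$ has rank determined by $\dim\tildeMP$ alone.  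If you want to salvage the fallback, I would recommend inserting the functoriality step so that the dimension comparison becomes the clean one between $\vdim D^\lambda_Z$ and the dimension of its image.
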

\begin{proof}
Define $[D^{\lambda}_Z]^{\vv}=\ctop(p_Z^*\Cok^{\lambda})\cdot [D^{\lambda}_Z]$, where $\Cok^\lambda$ is as in Remark~\ref{define Cok}. Then by functoriality of virtual pull backs~\cite{eu}, we have that
\[0^!_{p_Z^*N^{\lambda}\oplus p_Z^* \frBlG^{\lambda}} \left[C_{D^\lambda_{Z(X)}/D^\lambda}\right]=
0^!_{p_Z^*N^{\lambda}} \left[C_{D^\lambda_{Z(X)}/[D^\lambda]^{\vv}}\right]
\]
By commutativity of pull backs with push forwards we get that
\[(p_Z)_* \left(0^!_{p_Z^*N^{\lambda}} \left[C_{D^\lambda_{Z(X)}/[D^\lambda]^{\vv}}\right]\right)= \left(0^!_{N^{\lambda}} \left[C_{\tildeMX^0\cap \tildeMX^{\lambda}/(p_Z)_*[D^\lambda]^{\vv}}\right]\right).
\]
Since the virtual dimension of $D^{\lambda}$ is equal to the dimension of $\tildeMP$ and the dimension of $p_Z(D^\lambda)$ is strictly smaller than the dimension of $\tildeMP$ it follows that $(p_Z)_*[D^\lambda]^{\vv}=0$. This proves the Lemma.
\end{proof}

\section{Contributions from ghost components}\label{contribution}
We show that the splitting in \S\ref{sec:main theorem} is compatible with push forwards.
\subsection{Virtual push forwards} 
Consider
\begin{align*}
  M(X)^{\lambda} & =\bar{M}_{1,n_0+k}\times_{X} \bar{M}_{0,n_1+1}(X,d_1)\times_X \cdots \times_X \bar{M}_{0,n_k+1}(X,d_k)  \, \big/ \, \Gamma^\lambda\\
  P(X)^{\lambda} &= \bar{M}_{0,n_1+1}(X,d_1)\times_{X} \cdots \times_{X} \bar{M}_{0,n_k+1}(X,d_k)  \, \big/ \, \Gamma^\lambda \\
  \intertext{and}
  P(\pp)^{\lambda} &= \bar{M}_{0,n_1+1}(\pp,d_1)\times_{\pp} \cdots \times_{\pp} \bar{M}_{0,n_k+1}(\pp,d_k)  \, \big/ \, \Gamma^\lambda.
\end{align*}
Here $\lambda$ denotes the combinatorial data $(k; n_0,\ldots,n_k; d_1,\ldots, d_k)$ and $\Gamma^\lambda$ is the (finite) automorphism group of this data. See \cite{leeoh} for more details.
$P(\pp)^{\lambda}$ is smooth and we take $[P(\pp)^{\lambda}]^{\vv}=[P(\pp)^{\lambda}]$. Define a virtual class on $P(X)^{\lambda}$ by
\[[P(X)^{\lambda}]^{\vv}= [\bar{M}_{0,n_1+1}(X,d_1)]^{\vv}\times_X...\times_X [\bar{M}_{0,n_k+1}(X,\beta_k)]^{\vv}  \, \big/ \, |\Gamma^\lambda|.
\]
Recall that the ghost components are indexed by $\lambda \in I$.  Let $J \subset I$ denote the set of indices of ghost components consisting of maps from curves such that the irreducible genus one component carries no marked points.   Let $1 \in J$ denote the index of the ghost component whose generic point consists of a collapsed genus one component with no marked points, attached to a single $n$-marked, degree-$d$ rational tail.  Let $M_{0}(X)$ denote $\bar{M}_{0,n}(X, d)$, and let 
\begin{align*}q^1:\tildeMX^1 &\to M_{0}(X) &&&
q^{\lambda}: \tildeMX^{\lambda} &\to P(X)^{\lambda}
\end{align*} 
denote the natural projections. We are interested in computing 
\[
(q^{\lambda})_*\ev^*\gamma\cdot [\tildeMX^{\lambda}]^{\vv}.
\]
 The projection formula implies that 
\begin{equation}\label{proj}(q^{\lambda})_*\ev^*\gamma\cdot [\tildeMX^{\lambda}]^{\vv}=\ev^*\gamma\cdot q^{\lambda}_* \, [\tildeMX^{\lambda}]^{\vv}.
\end{equation} 
To compute $(q^{\lambda})_* \, [\tildeMX^{\lambda}]^{\vv}$, we will show that $q^{\lambda}$ satisfies the virtual push-forward property \cite{eu2} whenever $\lambda \in J$. 

\begin{lemma}\label{xzcompat} There is a Cartesian diagram
\[\xymatrix{\BltildeMX^1\ar[r]\ar[d]\ar@/_2pc/[dd]_(.30){\tilde{q}^1}&\BltildeMXP^1\ar[d]\ar[r]\ar@/^2pc/[dd]^(.30){\tilde{r}^1}&\BltildeMP^1\ar[d]\\
\tildeMX^1\ar[r]\ar[d]^{q^1}&\tildeMXP^1\ar[d]\ar[r]&\tildeMP^1\ar[d]\\
M_{0,n+1}(X)\ar[r]^i&\tilde{r}^1(\tildeMXP^1)\ar[r]&M_{0,n+1}(\pp)}
\]
and $\tilde{r}^1(\tildeMXP^1)=:M^X_{0,n+1}(\pp)$ has a perfect dual obstruction theory $E^{\bullet}_{M^X_{0,n+1}(\pp)/M_{0,n+1}(\pp)}$ such that  
\[
\left(E^{\bullet}_{M_{0,n+1}(X)/M^X_{0,n+1}(\pp)},E^{\bullet}_{M_{0,n+1}(X)/M_{0,n+1}(\pp)}, E^{\bullet}_{M^X_{0,n+1}(\pp)/M_{0,n+1}(\pp)}\right)
\]
 is a compatible triple of dual obstruction theories.
\end{lemma}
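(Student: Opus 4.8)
The plan is to make everything explicit on the $\lambda=1$ stratum, where the Vakil--Zinger blow-up does nothing to the moduli of curves, and then to build the three dual obstruction theories by hand out of the normal bundle of $X$ in $\pp^r$; the only input beyond standard Behrend--Fantechi theory is the Kim--Kresch--Pantev compatible-triple formalism recalled in the introduction.

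\emph{The Cartesian diagram.} I would start from the description of ghost components in Remark~\ref{MXP}. For $\lambda=1$ there is a single rational tail carrying all $n$ marked points and no marked point on the elliptic component, so that description reads $\tildeMP^1\simeq \bar{M}_{1,1}(\pp^r,0)\times_{\pp^r}\bar{M}_{0,n+1}(\pp^r,d)$, the fibre product being over the evaluation at the node; since $\bar{M}_{1,1}(\pp^r,0)=\bar{M}_{1,1}\times\pp^r$ this is simply $\bar{M}_{1,1}\times M_{0,n+1}(\pp)$ with $M_{0,n+1}(\pp):=\bar{M}_{0,n+1}(\pp^r,d)$, and the projection $q$ to the genus-zero factor is the morphism called $\tilde{r}^1$. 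Under this identification $\tildeMXP^1=q^{-1}(\ev_{n+1}^{-1}X)\simeq\bar{M}_{1,1}\times\ev_{n+1}^{-1}(X)$, so $\tilde{r}^1(\tildeMXP^1)=\ev_{n+1}^{-1}(X)=:M^X_{0,n+1}(\pp)$, a closed substack of $M_{0,n+1}(\pp)$; and $\tildeMX^1=\tildeMP^1\times_{M(\pp)}M(X)\simeq\bar{M}_{1,1}\times M_{0,n+1}(X)$ with $M_{0,n+1}(X):=\bar{M}_{0,n+1}(X,d)$ and $i$ the evident closed immersion $M_{0,n+1}(X)\hookrightarrow M^X_{0,n+1}(\pp)$ (a map factors through $X$ iff its rational tail does). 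With these identifications the two lower squares commute and are Cartesian by direct inspection — in each the left-hand space is the product of $\bar{M}_{1,1}$ with the bottom-left space, fibred over the bottom-right space — while the two upper squares are Cartesian because $\BltildeMP=\tildeMP\times_{\tildeV}\BltildeV$, $\BltildeMXP=\tildeMXP\times_{\tildeMP}\BltildeMP$ and $\BltildeMX=\tildeMX\times_{\tildeMP}\BltildeMP$, restricted to the $\lambda=1$ components. Pasting gives the whole diagram, with $\tilde{q}^1$ and $\tilde{r}^1$ the composites along the top.

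\emph{The dual obstruction theory on $M^X_{0,n+1}(\pp)$.} Since $\pp^r$ is convex, $M_{0,n+1}(\pp)$ is a smooth Deligne--Mumford stack, and $M^X_{0,n+1}(\pp)=\ev_{n+1}^{-1}(X)=M_{0,n+1}(\pp)\times_{\pp^r}X$ is the base change along $\ev_{n+1}$ of the regular closed immersion $X\hookrightarrow\pp^r$. I would therefore set $E^{\bullet}_{M^X_{0,n+1}(\pp)/M_{0,n+1}(\pp)}:=\ev_{n+1}^*N$, viewed as a one-term complex in degree $1$: this is the pullback of the perfect dual obstruction theory $N$ of $X\hookrightarrow\pp^r$, and because $M_{0,n+1}(\pp)$ is smooth it is a genuine perfect dual obstruction theory for the inclusion $M^X_{0,n+1}(\pp)\hookrightarrow M_{0,n+1}(\pp)$, of rank $\codim X$.

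\emph{The compatible triple.} Let $\pi\colon\mathcal{C}\to M_{0,n+1}(X)$ be the universal curve, $f\colon\mathcal{C}\to X$ the universal map and $Q\subset\mathcal{C}$ the section given by the $(n+1)$-st marked point, so that $\ev_{n+1}|_{M_{0,n+1}(X)}=f\circ Q$. Applying $R\pi_*$ to the exact sequence $0\to f^*N(-Q)\to f^*N\to f^*N|_Q\to 0$ on $\mathcal{C}$ and using convexity (the map $H^0(f^*N)\to H^0(f^*N|_Q)$ is surjective by global generation, so $R^1\pi_*f^*N(-Q)\hookrightarrow R^1\pi_*f^*N=0$) yields a short exact sequence of vector bundles on $M_{0,n+1}(X)$
\[
0\to\pi_*f^*N(-Q)\to\pi_*f^*N\xrightarrow{\ \rho\ }\ev_{n+1}^*N\to 0,
\]
with $\rho$ restriction of sections to $Q$. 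Here $E^{\bullet}_{M_{0,n+1}(X)/M_{0,n+1}(\pp)}:=R^{\bullet}\pi_*f^*N$ (in genus zero the single bundle $\pi_*f^*N$, placed in degree $1$) is the standard perfect dual obstruction theory of the closed immersion $M_{0,n+1}(X)\hookrightarrow M_{0,n+1}(\pp)$; $\rho$ is $i^*$ of the canonical morphism $E^{\bullet}_{M_{0,n+1}(X)/M_{0,n+1}(\pp)}\to i^*E^{\bullet}_{M^X_{0,n+1}(\pp)/M_{0,n+1}(\pp)}$ of dual obstruction theories over $M_{0,n+1}(\pp)$; and I would define $E^{\bullet}_{M_{0,n+1}(X)/M^X_{0,n+1}(\pp)}:=\pi_*f^*N(-Q)$ (in degree $1$), so that the displayed sequence becomes the distinguished triangle
\[
E^{\bullet}_{M_{0,n+1}(X)/M^X_{0,n+1}(\pp)}\to E^{\bullet}_{M_{0,n+1}(X)/M_{0,n+1}(\pp)}\to i^*E^{\bullet}_{M^X_{0,n+1}(\pp)/M_{0,n+1}(\pp)}\xrightarrow{+1}
\]
required in the definition of a compatible triple. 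All three dual obstruction theories, together with their maps to the duals of the cotangent complexes $L_{M_{0,n+1}(X)/M^X_{0,n+1}(\pp)}$, $L_{M_{0,n+1}(X)/M_{0,n+1}(\pp)}$, $i^*L_{M^X_{0,n+1}(\pp)/M_{0,n+1}(\pp)}$, are the functorial ones attached to $f$ and to the sequence $0\to f^*TX\to f^*T\pp^r\to f^*N\to 0$ twisted by $Q$, so the two triangles match; and since the first two legs are genuine perfect obstruction theories and the triangle is compatible with cotangent complexes, the two-out-of-three criterion for compatible triples (as in \cite{kkp,eu}) shows $E^{\bullet}_{M_{0,n+1}(X)/M^X_{0,n+1}(\pp)}$ is itself a perfect dual obstruction theory for $M_{0,n+1}(X)\to M^X_{0,n+1}(\pp)$, completing the proof.

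\emph{The main obstacle.} The first paragraph is pure bookkeeping once Remark~\ref{MXP} is granted, and the Koszul theory on $M^X_{0,n+1}(\pp)$ is standard. The delicate point is the compatibility, in the last paragraph, of the triangle of dual obstruction theories with the triangle of cotangent complexes: one must check that the tautological inclusion realising $E^{\bullet}_{M_{0,n+1}(X)/M^X_{0,n+1}(\pp)}$ as a cocone is induced by an actual morphism of cotangent complexes, which amounts to making the Behrend--Fantechi obstruction theories of the three morphisms $M_{0,n+1}(X)\to M_{0,n+1}(\pp)$, $M^X_{0,n+1}(\pp)\to M_{0,n+1}(\pp)$ and $M_{0,n+1}(X)\to M^X_{0,n+1}(\pp)$ genuinely functorial. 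This is where I expect the argument to require real work.
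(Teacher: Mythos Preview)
Your proposal is correct and follows essentially the same approach as the paper. Both arguments identify $M^X_{0,n+1}(\pp)=\ev_{n+1}^{-1}(X)$, set $E^{\bullet}_{M^X_{0,n+1}(\pp)/M_{0,n+1}(\pp)}=f^*N|_Q$ (your $\ev_{n+1}^*N$), define $E^{\bullet}_{M_{0,n+1}(X)/M^X_{0,n+1}(\pp)}$ as the kernel of restriction to $Q$ (the paper calls this $K_0^1$, you write it as $\pi_*f^*N(-Q)$), and read off compatibility from the resulting short exact sequence; the paper phrases the verification that $K_0^1$ is an obstruction theory via the Four Lemma rather than two-out-of-three, and does not spell out the convexity step or the product description of $\tildeMP^1$, but these are cosmetic differences. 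Your worry in the final paragraph is largely unfounded: compatibility with the cotangent-complex triangle is automatic here because all three maps are closed immersions with vector-bundle dual obstruction theories arising functorially from the section $Q$, so the Four Lemma argument the paper gives is all that is needed.
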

\begin{proof} 
That the lower left and upper squares in the diagram are Cartesian follows from the definitions of $\tildeMXP$, $\BltildeMX$, $\BltildeMXP^1$, and $\BltildeMP^1$. To find a perfect dual obstruction theory for $M^X_{0,n+1}(\pp)$, we note that $M^X_{0,n+1}(\pp)=\ev_{n+1}^{-1}X\cap M_{0,n+1}(\pp)$. This shows that 
\[
E^{\bullet}_{M^X_{0,n+1}(\pp)/M_{0,n+1}(\pp)}= [0 \to \pi_*f^*N|_Q]
\] 
where the complex on the right is concentrated in $[0,1]$. Let $K_0^1$ be defined by the following exact sequence.
\begin{equation}\label{compat Y}
0\to K^1_0\to\pi^0_*f^*N\to \pi^0_*f^*N|_Q\to 0,
\end{equation}
where $\pi^0:M_{0,n+2}(\pp)\to M_{0,n+1}(\pp)$ is the universal curve\footnote{Note that $K^1$ in \S\ref{sec:main theorem} is $(\tilde{q}^1)^*K^1_0$.}. Define
\[E^{\bullet}_{M_{0,n+1}(X)/M^X_{0,n+1}(\pp)}=[0 \to K^1_0].
\]
By the Four Lemma, $E^{\bullet}_{M_{0,n+1}(X)/M^X_{0,n+1}(\pp)}$ is an obstruction theory. The compatibility of the triple 
\[
\left(E^{\bullet}_{M_{0,n+1}(X)/M^X_{0,n+1}(\pp)},E^{\bullet}_{M_{0,n+1}(X)/M_{0,n+1}(\pp)}, E^{\bullet}_{M^X_{0,n+1}(\pp)/M_{0,n+1}(\pp)}\right)
\] 
is equivalent to the exactness of (\ref{compat Y}).
\end{proof}

\begin{proposition} \label{one}$E_{\BltildeMX^1/\BltildeMXP^1}\simeq (\tilde{q}^1)^*E_{M_{0,n+1}(X)/M^X_{0,n+1}(\pp)}$ and \[\tilde{q}^1_*\,[\BltildeMX^1]^{\vv}=i^*D\cdot [M_{0,n+1}(X)]^{\vv},\]
with $D$ a divisor on $M^X_{0,n+1}(\pp)$.
\end{proposition}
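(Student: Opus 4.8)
The plan is to obtain both statements from the outer rectangle of the diagram in Lemma~\ref{xzcompat}, namely
\[
  \xymatrix{
    \BltildeMX^1 \ar[r]^-{j} \ar[d]_-{\tilde q^1} & \BltildeMXP^1 \ar[d]^-{\tilde r^1} \\
    M_{0,n+1}(X) \ar[r]^-{i} & M^X_{0,n+1}(\pp),
  }
\]
which is Cartesian, being the vertical composite of the two Cartesian squares recorded there. In particular $j$ is base-changed from $i$, $\tilde q^1$ from $\tilde r^1$, and $\tilde r^1$ (hence $\tilde q^1$) is proper.

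For the first assertion, recall from \S\ref{sec:main theorem} that $[\BltildeMX^1]^{\vv}$ is by construction the virtual pullback of $[\BltildeMXP^1]^{\vv}$ along $j$ with respect to the vector bundle $K^1 = \ker\big(p_{\pp}^*N^1\to p_{\pp}^*\pi_*f^*N|_{Q}\big)$, so the relative obstruction theory is $E_{\BltildeMX^1/\BltildeMXP^1} = [0\to K^1]$. As observed in the proof of Lemma~\ref{xzcompat}, $K^1 = (\tilde q^1)^*K^1_0$; alternatively one sees this directly, since collapsing the elliptic tail identifies $\pi_*f^*N|_{\tildeMX^1}$ with the pullback of $\pi^0_*f^*N$, and $K^1$, $K^1_0$ are the kernels of evaluation at the node. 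Since Lemma~\ref{xzcompat} identifies $E^{\bullet}_{M_{0,n+1}(X)/M^X_{0,n+1}(\pp)}$ with $[0\to K^1_0]$, pulling back along $\tilde q^1$ yields $E_{\BltildeMX^1/\BltildeMXP^1}\simeq(\tilde q^1)^*E_{M_{0,n+1}(X)/M^X_{0,n+1}(\pp)}$, which is the first claim.

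The second assertion is then largely formal. Since $j$ is base-changed from $i$ with matching perfect obstruction theories, functoriality of virtual pullbacks in a Cartesian square, together with properness, gives
\[
  \tilde q^1_*\,[\BltildeMX^1]^{\vv} = \tilde q^1_*\,j^![\BltildeMXP^1]^{\vv} = i^!\big(\tilde r^1_*[\BltildeMXP^1]^{\vv}\big).
\]
For any divisor $D$ on the smooth stack $M^X_{0,n+1}(\pp) = \ev_{n+1}^{-1}(X)$ one has $i^!\big(D\cdot[M^X_{0,n+1}(\pp)]\big) = i^*D\cdot i^![M^X_{0,n+1}(\pp)]$, and $i^![M^X_{0,n+1}(\pp)] = 0^!_{K^1_0}[C_{M_{0,n+1}(X)/M^X_{0,n+1}(\pp)}] = [M_{0,n+1}(X)]^{\vv}$: indeed $M_{0,n+1}(X)$ is the zero locus in $M^X_{0,n+1}(\pp)$ of the section of $K^1_0$ induced by the equation of $X$ — it lands in $K^1_0$ precisely because $\ev_{n+1}$ already maps to $X$ — and the resulting refined class is the genus-zero virtual class because $R^1\pi_*\ev^*N$ vanishes for genus-zero maps. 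So it suffices to prove $\tilde r^1_*[\BltildeMXP^1]^{\vv} = D\cdot[M^X_{0,n+1}(\pp)]$ for a divisor $D$.

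This last identity is the heart of the matter. By Remark~\ref{define Cok} we may write $[\BltildeMXP^1]^{\vv} = \ctop(\Cok^1)\cdot[\BltildeMXP^1]$, where $\Cok^1$ is the cokernel of $C^{X,1}\hookrightarrow\frE^{X,1}$, and the plan is to analyse $\ctop(\Cok^1)\cdot[\BltildeMXP^1]$ along the fibres of $\tilde r^1$. Those fibres are built out of $\bar M_{1,1}$ (the contracted elliptic component, carrying only the node) together with the projective-bundle directions of the two blow-ups, and on them the reduced obstruction theory $\frE^{X,1}$ and the cone $C^{X,1} = C^{X,1}_{\BltildeMP/\tildefrPic}\oplus\pi_*f^*N|_Q$ of Lemma~\ref{reduced obstruction} can be written out explicitly using the Hu--Li local equations for $\tildeMP^1$; the Hodge bundle $\E$ on the $\bar M_{1,1}$-factor enters $\Cok^1$. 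Pushing forward integrates out the fibre directions and leaves a divisor class $D$ on $M^X_{0,n+1}(\pp)$ (whose eventual contribution, computed in \S\ref{contribution}, accounts for the coefficient $\tfrac{1}{12}$ in Theorem~\ref{thm:LZ_formula}). The main obstacle is precisely this computation: making $\frE^{X,1}$ and $C^{X,1}$ explicit enough along the fibres of $\tilde r^1$ to evaluate the pushforward; everything else is bookkeeping with Cartesian squares and functoriality of virtual pullbacks.
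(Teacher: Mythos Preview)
Your argument for the first assertion, and your reduction of the second assertion to the identity $\tilde r^1_*[\BltildeMXP^1]^{\vv}=D\cdot[M^X_{0,n+1}(\pp)]$ via commutativity of virtual pull-back with proper push-forward, are correct and match the paper exactly.

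Where you diverge is in the last step. You call the identity $\tilde r^1_*[\BltildeMXP^1]^{\vv}=D\cdot[M^X_{0,n+1}(\pp)]$ ``the heart of the matter'' and set out to compute the push-forward explicitly by analysing $\Cok^1$ along the fibres of $\tilde r^1$ using the Hu--Li local equations. This is unnecessary: the proposition only asserts the \emph{existence} of such a divisor $D$, not its value. The paper's argument here is a one-liner: $M^X_{0,n+1}(\pp)=\ev_{n+1}^{-1}(X)$ is \emph{smooth} (it is cut out of the smooth space $\bar M_{0,n+1}(\pp^r,d)$ by a regular section), so every class in $A_*(M^X_{0,n+1}(\pp))$ of dimension $\dim M^X_{0,n+1}(\pp)-1$ is automatically of the form $D\cdot[M^X_{0,n+1}(\pp)]$ for some divisor $D$. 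A dimension count shows that $\tilde r^1_*[\BltildeMXP^1]^{\vv}$ has exactly this dimension (compare the companion Proposition~\ref{lambda}, where for $\lambda\neq 1$ with $\lambda\in J$ the push-forward vanishes ``for dimensional reasons'': the fibres are strictly larger, so the class lands in too low a dimension). That finishes the proof.

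The explicit fibrewise computation you sketch is carried out later in the paper (Lemma~\ref{nicelocus} and Theorem~\ref{degree one}), but only over an open locus and only in the Calabi--Yau threefold case with $n=0$, where it is actually needed to pin down the coefficient $\tfrac{1}{12}$. For the present proposition, smoothness plus dimension is enough.
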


\begin{proof}  The isomorphism follows from the Cartesian diagram
\begin{equation}\label{vpfdelta}
\xymatrix{\BltildeMX^1\ar[r]\ar[d]_{\tilde{q}^1}& \BltildeMXP^1\ar[d]^{\tilde{r}^1}\\
M_{0,n+1}(X)\ar[r]^{j_0} &M^X_{0,n+1}(\pp)
}
\end{equation}
Let $K^1$ be as in \S\ref{sec:main theorem}. By the above lemma it suffices to show that 
\[\tilde{q}^1_*i^!_{K^1}[\BltildeMXP]^{\vv}=i^*D\cdot [M_{0,n+1}(X)]^{\vv},
\]
with $D$ a divisor on $M^X_{0,n+1}(\pp)$. Since pull-backs commute with push forwards we have
\[
\tilde{q}^1_* i^!_{K^1} [\BltildeMXP]^{\vv}=i^!_{K^1}\tilde{r}^1_*[\BltildeMXP]^{\vv}
\]
Since $M_{0,n+1}^X(\pp)$ is smooth, we have that 
\[\tilde{r}^1_*[\BltildeMXP]^{\vv}=i^!_{K^1}D\cdot[M_{0,n+1}^X(\pp)]
\] for some divisor $D$ in $M_{0,n+1}^X(\pp)$. We thus get
\[
\tilde{q}^1_*i^!_{K^1}[\BltildeMXP]^{\vv}=i^!_{K^1}D\cdot[M_{0,n+1}^X(\pp)] =
i^*D\cdot [M_{0,n+1}(X)]^{\vv}.
\]
\end{proof}

\begin{proposition}\label{lambda}  $E_{\BltildeMX^{\lambda}/\BltildeMP^{\lambda}}\simeq (\tilde{q}^{\lambda})^*E_{M_{0,1}(X)/M_{0,1}(\pp)}$ and, for $\lambda \in J$ with $\lambda \ne 1$:
\[
\tilde{q}^{\lambda}_* \,[\BltildeMX^{\lambda}]^{\vv}=0.
\]
\end{proposition}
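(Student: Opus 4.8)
The plan is to follow the proof of Proposition~\ref{one}, replacing the single rational tail considered there by the configuration of $k=k(\lambda)$ rational tails meeting the collapsed elliptic component. For the first assertion, the argument of Lemma~\ref{xzcompat} applies essentially verbatim: since $f$ contracts the elliptic component of every curve in $\tildeMP^{\lambda}$, the relative deformation--obstruction theory of $\BltildeMX^{\lambda}\to\BltildeMP^{\lambda}$ (which governs lifting a stable map to $\pp^r$ to one to $X$) involves only the rational tails and the distinguished node, and so is pulled back along $\tilde{q}^{\lambda}$ from the relative obstruction theory of the corresponding genus-zero problem. Exhibiting the relevant Cartesian square (the analogue of~\eqref{vpfdelta}) gives the isomorphism, and the surjectivity of $\pi_*f^*N\twoheadrightarrow\pi_*f^*N|_{Q_{\lambda}}$ from Lemma~\ref{reduced obstruction} is what makes the complex perfect.

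For the vanishing I would form the Cartesian square
\[
  \xymatrix{
    \BltildeMX^{\lambda}\ar[r]\ar[d]_{\tilde{q}^{\lambda}} & \BltildeMXP^{\lambda}\ar[d]^{\tilde{r}^{\lambda}} \\
    P(X)^{\lambda}\ar[r]^-{i} & P^X(\pp)^{\lambda}
  }
\]
where $P^X(\pp)^{\lambda}$ is the substack of $\bigl(\bar M_{0,n_1+1}(\pp^r,d_1)\times_{\pp^r}\cdots\times_{\pp^r}\bar M_{0,n_k+1}(\pp^r,d_k)\bigr)\big/\Gamma^{\lambda}$ on which the common value of $f$ at the distinguished nodes lies on $X$. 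Since the node-evaluation maps $\bar M_{0,n_i+1}(\pp^r,d_i)\to\pp^r$ are smooth and $X\subset\pp^r$ is smooth, $P^X(\pp)^{\lambda}$ is a smooth Deligne--Mumford stack; and the square is Cartesian because a ghost map to $\pp^r$ with $f(Q_{\lambda})\in X$ lands in $X$ exactly when each of its rational tails does (the elliptic component being collapsed to $f(Q_{\lambda})$). Using the first assertion, $[\BltildeMX^{\lambda}]^{\vv}=i^{!}_{K^{\lambda}}[\BltildeMXP^{\lambda}]^{\vv}$, and commuting the refined Gysin map past the proper push-forward $\tilde{r}^{\lambda}_{*}$, exactly as in the proof of Proposition~\ref{one}, gives $\tilde{q}^{\lambda}_{*}[\BltildeMX^{\lambda}]^{\vv}=i^{!}_{K^{\lambda}}\bigl(\tilde{r}^{\lambda}_{*}[\BltildeMXP^{\lambda}]^{\vv}\bigr)$.

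It remains to run the dimension count. One computes that the reduced virtual class $[\BltildeMXP^{\lambda}]^{\vv}$ has dimension $\vdim\bar M_{1,n}(\pp^r,\beta)$ (removing the rank-$\codim X$ summand $h^{1}(\pi_*f^*N)$ from the obstruction theory is compensated, in the $\BltildeMXP$-construction, by the summand $\pi_*f^*N|_{Q_{\lambda}}$ of Lemma~\ref{reduced obstruction}), whereas $P^X(\pp)^{\lambda}$ is smooth of dimension $\vdim\bar M_{1,n}(\pp^r,\beta)-\bigl(2k(\lambda)-3\bigr)$; equivalently, the excess is $\vdim\bar M_{1,n}(X,\beta)-\vdim P(X)^{\lambda}=2k(\lambda)-3$. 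For $\lambda\in J$ with $\lambda\neq 1$ the elliptic component carries no marked points and is attached to at least two tails, so $k(\lambda)\geq 2$, hence $\dim[\BltildeMXP^{\lambda}]^{\vv}>\dim P^X(\pp)^{\lambda}$; therefore $\tilde{r}^{\lambda}_{*}[\BltildeMXP^{\lambda}]^{\vv}$ lies in a Chow group $A_{m}(P^X(\pp)^{\lambda})$ with $m>\dim P^X(\pp)^{\lambda}$, which vanishes, and so $\tilde{q}^{\lambda}_{*}[\BltildeMX^{\lambda}]^{\vv}=i^{!}_{K^{\lambda}}(0)=0$. (When $k(\lambda)=1$ the same computation gives excess $1$, and $\tilde{r}^{1}_{*}[\BltildeMXP^{1}]^{\vv}$ is the divisor class $D$ of Proposition~\ref{one}.)

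The main obstacle is the first assertion together with the verification that the square above is Cartesian with $P^X(\pp)^{\lambda}$ smooth: one must confirm that contracting the elliptic component reduces the deformation theory of $\BltildeMX^{\lambda}\to\BltildeMP^{\lambda}$ --- and in particular the \emph{reduced} obstruction theory encoded by $K^{\lambda}$, with its node constraint --- to the genus-zero situation of Lemma~\ref{xzcompat}, and that the node-evaluation maps of the tails are smooth so that the incidence conditions defining $P^X(\pp)^{\lambda}$ preserve smoothness. Granting this, the vanishing is purely the dimension count above.
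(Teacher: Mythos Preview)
Your proof is correct and follows the same strategy as the paper's: form the Cartesian square over $P^X(\pp)^{\lambda}=\tilde{r}^{\lambda}(\tildeMXP^{\lambda})$, deduce $[\BltildeMX^{\lambda}]^{\vv}=i^{!}_{K^{\lambda}}[\BltildeMXP^{\lambda}]^{\vv}$, commute the refined Gysin map with $\tilde{r}^{\lambda}_{*}$, and conclude that $\tilde{r}^{\lambda}_{*}[\BltildeMXP^{\lambda}]^{\vv}=0$ on dimension grounds. The paper simply writes ``for dimensional reasons'' where you spell out the count; note that your formula $2k(\lambda)-3$ for the excess presupposes $\dim X=3$ (in general the excess is $2k(\lambda)-\dim X$), which is the paper's standing hypothesis in this section via Theorem~\ref{main2}.
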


\begin{proof}  The isomorphism follows from the Cartesian diagram
\[
\xymatrix{\BltildeMX^{\lambda}\ar[r]\ar[d]_{\tilde{q}^{\lambda}}&\BltildeMXP^{\lambda}\ar[d]^{\tilde{r}^{\lambda}}\ar[r]&\BltildeMP^{\lambda}\ar[d]\\
P(X)^{\lambda}\ar[r]&\tilde{r}^{\lambda}(\tildeMXP)\ar[r]&P(\pp)^{\lambda}}
\]
By functoriality of pull backs we have that 
\[
[\BltildeMX^{\lambda}]^{\vv}=i^!_{K^{\lambda}}[\BltildeMXP^{\lambda}]^{\vv}. 
\]
It thus suffices to show that $\tilde{q}^{\lambda}_*i^!_{K^{\lambda}}[\BltildeMXP]^{\vv}=0$. Since pull-backs commute with push forwards we have 
\[
(\tilde{q}^{\lambda})_*i^!_{K^{\lambda}}[\BltildeMXP^\lambda]^{\vv}=i^!_{K^{\lambda}}\tilde{r}^{\lambda}_*[\BltildeMXP^\lambda]^{\vv}
\]
and, since $\lambda \in J$, $\tilde{r}^{\lambda}_*[\BltildeMXP^\lambda]^{\vv}=0$ for dimensional reasons. 
\end{proof}

Propositions~\ref{one} and~\ref{lambda} together prove:

\begin{theorem} \label{virtpushfor}  
  Let $\lambda \in J$.  The morphism $q^{\lambda}$ satisfies the virtual push forward property. 
\end{theorem}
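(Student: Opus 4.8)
The plan is to assemble the theorem from Propositions~\ref{one} and~\ref{lambda}, which between them cover the two kinds of index in $J$, together with the projection formula \eqref{proj}. First I would reduce the claim for $q^\lambda$ on $\tildeMX^\lambda$ to the corresponding claim for the blown-up morphism $\tilde q^\lambda$ on $\BltildeMX^\lambda$. This reduction is purely formal: by definition $[\tildeMX^\lambda]^\vv = p_{X*}[\BltildeMX^\lambda]^\vv$, and the left-hand columns of the Cartesian diagrams in Lemma~\ref{xzcompat} and in Propositions~\ref{one} and~\ref{lambda} show that $\tilde q^\lambda = q^\lambda \circ p_X$ on the ghost component, so $q^\lambda_*[\tildeMX^\lambda]^\vv = \tilde q^\lambda_*[\BltildeMX^\lambda]^\vv$. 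Hence it suffices to prove that $\tilde q^\lambda_*[\BltildeMX^\lambda]^\vv$ is a multiple of the virtual class of the target, i.e.\ of the form $\alpha^\lambda \cap [\,\cdot\,]^\vv$ for a class $\alpha^\lambda$ pulled back from the target; combined with \eqref{proj} this is exactly the virtual push-forward property.

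Next I would split into the two cases. For $\lambda \in J$ with $\lambda \neq 1$, Proposition~\ref{lambda} gives directly $\tilde q^\lambda_*[\BltildeMX^\lambda]^\vv = 0$, hence $q^\lambda_*[\tildeMX^\lambda]^\vv = 0 = 0 \cap [P(X)^\lambda]^\vv$, so the property holds. For $\lambda = 1$, Proposition~\ref{one} identifies $E_{\BltildeMX^1/\BltildeMXP^1}$ with $(\tilde q^1)^* E_{M_{0,n+1}(X)/M^X_{0,n+1}(\pp)}$ and produces $\tilde q^1_*[\BltildeMX^1]^\vv = i^*D \cdot [M_{0,n+1}(X)]^\vv$ for a divisor $D$ on $M^X_{0,n+1}(\pp)$. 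If one records the attaching node as a marked point this already has the desired form; to land on $M_0(X) = \bar M_{0,n}(X,d)$ one pushes forward once more along the forgetful (universal curve) morphism $\rho \colon M_{0,n+1}(X) \to M_0(X)$, uses the standard compatibility $\rho^*[M_0(X)]^\vv = [M_{0,n+1}(X)]^\vv$, and applies the projection formula to obtain $q^1_*[\tildeMX^1]^\vv = \rho_*(i^*D) \cdot [M_0(X)]^\vv$, again a multiple of the virtual class of the target.

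The main obstacle is not this assembly, which is formal once Propositions~\ref{one} and~\ref{lambda} are available, but rather those two propositions themselves: establishing that the relative obstruction theory of the ghost component over $\BltildeMXP^\lambda$ is pulled back from the genus-zero moduli space (so that functoriality of virtual pull-backs applies), and controlling the virtual dimension of $\BltildeMXP^\lambda$ over its image in $P(\pp)^\lambda$ so that the push-forward vanishes when $\lambda \neq 1$ and is divisorial when $\lambda = 1$. Beyond that, the only care needed at this stage is the bookkeeping of the node marking — the $n$ versus $n+1$ discrepancy between the statement of $q^1$ and the diagram of Lemma~\ref{xzcompat} — and the identification of $P(X)^\lambda$ and $P(\pp)^\lambda$ with the appropriate fibre products of genus-zero moduli spaces over $X$ and $\pp^r$.
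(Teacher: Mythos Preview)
Your proposal is correct and follows the paper's approach: the paper's entire ``proof'' is the single line ``Propositions~\ref{one} and~\ref{lambda} together prove:'' immediately preceding the statement, so you are in fact supplying more detail than the authors do. Your reduction from $q^\lambda$ to $\tilde q^\lambda$ via $[\tildeMX^\lambda]^\vv = p_{X*}[\BltildeMX^\lambda]^\vv$ and the case split $\lambda=1$ versus $\lambda\in J\setminus\{1\}$ are exactly right.

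One small point of care: in the $\lambda=1$ step you invoke $\rho^*[M_0(X)]^\vv = [M_{0,n+1}(X)]^\vv$ for the forgetful map $\rho$. This compatibility does hold (the universal curve gives a flat map and the relative obstruction theory is pulled back), but note that the paper itself is a bit loose with $n$ versus $n+1$ here, as you observed: $q^1$ is declared to land in $M_0(X)=\bar M_{0,n}(X,d)$, while the diagram of Lemma~\ref{xzcompat} and Proposition~\ref{one} work over $M_{0,n+1}(X)$. Your extra push-forward along $\rho$ is the right way to reconcile the two, and since Proposition~\ref{one} already gives a result of the form $i^*D\cdot[M_{0,n+1}(X)]^\vv$, the projection formula along $\rho$ yields $\rho_*(i^*D)\cdot[M_0(X)]^\vv$ as you say. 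The remaining ``obstacle'' you identify---that the real work lives in Propositions~\ref{one} and~\ref{lambda}---is an accurate assessment.
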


\subsection{A local calculation for CY threefolds}

In this section we consider $X$ a smooth projective Calabi--Yau threefold and we set $n=0$.

\begin{lemma}
Let $\tildeMX^{1,\circ}$ denote the complement of the locus $\tildeMX^0\cap \tildeMX^1$ in $\tildeMX^1$. Then, $\tildeMX^{1,\circ}$ has a virtual class and the morphism $q^1$ is proper on $\tildeMX^{1,\circ}$.
\end{lemma}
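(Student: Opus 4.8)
The plan is to prove the two assertions separately: the virtual class will come from the reduced obstruction theory of Lemma~\ref{reduced obstruction}, and the properness of $q^1$ from properness of the ambient moduli spaces together with the Hu--Li local equations. Throughout, recall that $1\in J$ indexes the ghost component whose generic point is a genus-one curve $E$ contracted by $f$, carrying a single marked point (the node $Q$), attached to one degree-$d$ rational tail; after deleting $\tildeMX^0\cap\tildeMX^1$ this picture persists, so on $\tildeMX^{1,\circ}$ the component $E$ is a genuine arithmetic-genus-one subcurve collapsed by $f$.

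\textbf{The virtual class.} Writing $\tildeMP^{1,\circ}:=\tildeMP^1\setminus\tildeMP^0$, one has $\tildeMX^{1,\circ}=\tildeMP^{1,\circ}\times_{M(\pp)}M(X)$, and $\tildeMP^1$ is smooth (by~\cite{VZ,huli} it is the product $\bar{M}_{1,1}(\pp^r,0)\times_{\pp^r}\bar{M}_{0,1}(\pp^r,d)$). Since trees of $\pp^1$'s are convex and $N=N_{X/\pp^r}$ is globally generated, the only contribution to $R^1\pi_*f^*N$ over $\tildeMX^{1,\circ}$ is from the collapsed component, so $h^1(\pi_*f^*N)$ is the locally free sheaf $H^1(E,O_E)\otimes N|_Q$ of rank $\codim X$. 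I would then argue as in Lemma~\ref{reduced obstruction} — the passage to $\BltildeMP$ there is needed only to make $C_{\BltildeMP/\tildefrPic}$ a line bundle near $\tildeMX^0\cap\tildeMX^1$, whereas over the smooth $\tildeMP^{1,\circ}$ the cone $C_{\tildeMP^1/\tildefrPic}$ is already a vector bundle stack — that the kernel $E^\bullet_{1,\red}$ of the surjection $E_{\tildeMP/\tildefrPic}|_{\tildeMX^{1,\circ}}\to h^1(\pi_*f^*N)$ is a two-term complex of vector bundles, hence a perfect obstruction theory for $\tildeMX^{1,\circ}$ relative to $\tildefrPic$; this defines $[\tildeMX^{1,\circ}]^{\vv}$. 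Finally I would check that this agrees with the restriction to $\tildeMX^{1,\circ}$ of the class $[\tildeMX^1]^{\vv}=p_{X*}[\BltildeMX^1]^{\vv}$ of \S\ref{sec:main theorem}: the complex $E^{X,1}$ of Lemma~\ref{reduced obstruction}(ii) is pulled back along $p_X$ from a twist of $E^\bullet_{1,\red}$, so the identification follows from the functoriality of virtual pull-backs used in the proof of Theorem~\ref{main theorem}, keeping in mind that $p_X$ is a projective bundle rather than an isomorphism over $\tildeMP^{1,\circ}$.

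\textbf{Properness of $q^1$.} The stack $\tildeMX$ is proper ($M(X)$ is proper and $\tildeMP\to M(\pp)$ is proper), and $\tildeMX^1$ is a closed substack of it, hence proper. As $M_0(X)=\bar{M}_{0,0}(X,d)$ is separated, $q^1\colon\tildeMX^1\to M_0(X)$ is proper, and therefore the image $Z:=q^1(\tildeMX^0\cap\tildeMX^1)$ of the closed substack $\tildeMX^0\cap\tildeMX^1$ is closed; set $U:=M_0(X)\setminus Z$. The inclusion $(q^1)^{-1}(U)\subseteq\tildeMX^{1,\circ}$ is automatic, so it remains to prove the reverse inclusion, i.e.\ that whether a point of $\tildeMX^1$ lies in the main component $\tildeMX^0$ is detected by its image under $q^1$. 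Granting this, $q^1$ restricts to the proper morphism $\tildeMX^{1,\circ}=(q^1)^{-1}(U)\to U$, which is the assertion.

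\textbf{The main obstacle.} The crux is that last claim: whether a ghost configuration lies in $\tildeMX^0$ should depend only on the stabilised rational tail $q^1(-)$. I expect to deduce this from the Hu--Li local equations~\cite[Theorem~2.19]{huli} for the Vakil--Zinger blow-up near the stratum indexed by $1$, which present $\tildeMP^0$, in a neighbourhood of $\tildeMP^1$, as the vanishing locus of the leading derivative of the map along the contracted elliptic direction; because $E$ carries no marked points (here $1\in J$ and $n=0$), this quantity is computed from the derivative of the map on the rational tail at $Q$ alone, hence is insensitive to the collapsed elliptic factor and depends only on $q^1$, which gives $\tildeMX^0\cap\tildeMX^1=(q^1)^{-1}(Z)$. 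An equivalent, valuative route is to complete a one-parameter family in $\tildeMX^{1,\circ}$ by taking the limit of the elliptic factor in the proper stack $\bar{M}_{1,1}$ and of the attaching point along the proper universal curve over $M_0(X)$, then gluing and extending the map constantly on the elliptic factor; there too the delicate point is to check that the resulting limit does not specialise into $\tildeMX^0$.
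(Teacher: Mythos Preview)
Your approach is far more elaborate than the paper's, and for the virtual class it also contains a conceptual slip.

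\textbf{Virtual class.} The paper's entire argument is one line: $\tildeMX^{1,\circ}$ is open in $\tildeMX$ (not merely in $\tildeMX^1$), so one simply sets $[\tildeMX^{1,\circ}]^{\vv}=[\tildeMX]^{\vv}\cap\tildeMX^{1,\circ}$. You miss this observation and instead try to build a reduced obstruction theory from scratch. Moreover, your construction has a gap: the kernel $E^\bullet_{1,\red}$ of $E_{\tildeMP/\tildefrPic}|_{\tildeMX^{1,\circ}}\to h^1(\pi_*f^*N)$ is a reduction of the obstruction theory for $\tildeMP$ over $\tildefrPic$, not for $\tildeMX^{1,\circ}$ over $\tildefrPic$; it maps to $L_{\tildeMP/\tildefrPic}|_{\tildeMX^{1,\circ}}$, not to $L_{\tildeMX^{1,\circ}/\tildefrPic}$, so it is not in itself a perfect obstruction theory for $\tildeMX^{1,\circ}$. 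What that kernel does give (via the machinery of \S5--\S8) is the class $[\tildeMX^1]^{\vv}$ already defined in the paper, and restricting \emph{that} to the open $\tildeMX^{1,\circ}$ is again just restricting a virtual class to an open. Either way the one-line argument suffices, and the compatibility check you flag becomes unnecessary once you use the openness in $\tildeMX$ together with Theorem~\ref{main theorem}.

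\textbf{Properness.} The paper says ``clear'' and gives no further argument. Your strategy---show $\tildeMX^{1,\circ}=(q^1)^{-1}(U)$ for an open $U\subset M_0(X)$ so that $q^1|_{\tildeMX^{1,\circ}}$ is a base change of a proper map---is the right idea and makes explicit what the paper leaves implicit. Your ``main obstacle'' (that membership in $\tildeMX^0\cap\tildeMX^1$ is detected by $q^1$) is indeed the substance of the claim, and your sketch via the Hu--Li local equations is along the correct lines; but note that once one accepts that $\tildeMX^{1,\circ}$ is open in $\tildeMX$, this saturation statement is essentially equivalent to that openness, so the two parts of the lemma are closer than your write-up suggests.
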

\begin{proof} $\tildeMX^{1,\circ}$ is open in $\tildeMX$ and therefore it has a virtual class 
\[
[\tildeMX^{1,\circ}]^{\vv}=[\tildeMX]^{\vv}\cap\tildeMX^{1,\circ}.
\] 
The second statement is clear. 
\end{proof}

\begin{lemma}\label{nicelocus}
  We have 
  \[
    (q^1)_*[\tildeMX^{1,\circ}]^{\vv}=\frac{2+K_X\cdot\beta}{24}[M_0(X)\cap q^1(\tildeMX^{1,\circ}]^{\vv}.
  \]
\end{lemma}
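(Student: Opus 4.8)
Here is how I would attack this.

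The plan is to make the geometry over the ``nice'' locus explicit, reduce $(q^1)_*$ to a fibrewise degree on the rational tail, and evaluate that degree using $\int_{\bar M_{1,1}}\lambda_1=\tfrac1{24}$.

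First I would record the product structures. By Remark~\ref{MXP} and the product description of the ghost component $M(\pp)^1$ we have $\tildeMP^1\cong\bar M_{1,1}\times\bar M_{0,1}(\pp^r,\beta)$, hence $\tildeMX^1\cong\bar M_{1,1}\times\bar M_{0,1}(X,\beta)$ and $\tildeMXP^1\cong\bar M_{1,1}\times M^X_{0,1}(\pp)$, where in each case the marked point of the $\bar M_{1,1}$-factor is glued to the marked point of the second factor to form the node $Q$; under these identifications $q^1$ is the composite $\bar M_{1,1}\times\bar M_{0,1}(X,\beta)\xrightarrow{\mathrm{pr}_2}\bar M_{0,1}(X,\beta)\xrightarrow{\pi_0}\bar M_{0,0}(X,\beta)=M_0(X)$, with $\pi_0$ the universal curve. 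I would then use the local description of the components of $\tildeMP$ (\cite{huli,VZ}) to see that $[\tildeMX^0]^\vv$ and the $[\tildeMX^\mu]^\vv$ for $\mu\in I\setminus\{1\}$ restrict to zero on the open substack $\tildeMX^{1,\circ}$, so that by the previous lemma and Corollary~\ref{splitting corollary},
\[
  [\tildeMX^{1,\circ}]^\vv=[\tildeMX^1]^\vv\big|_{\tildeMX^{1,\circ}}=p_{X*}[\BltildeMX^1]^\vv\big|_{\tildeMX^{1,\circ}}.
\]

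Next I would carry out the reduction. Since $q^1\circ p_X=\pi_0\circ\tilde{q}^1$ and $p_{X*}[\BltildeMX]^\vv=[\tildeMX]^\vv$ (Lemma~\ref{pushforwards}), applying $(q^1)_*$ gives $(q^1)_*[\tildeMX^{1,\circ}]^\vv=\pi_{0*}\big(\tilde{q}^1_*[\BltildeMX^1]^\vv\big)\big|_{M_0(X)^\circ}$, where $M_0(X)^\circ:=M_0(X)\cap q^1(\tildeMX^{1,\circ})$. By Proposition~\ref{one}, $\tilde{q}^1_*[\BltildeMX^1]^\vv=i^*D\cdot[M_{0,1}(X)]^\vv$ for a $\Q$-divisor class $D$ on $M^X_{0,1}(\pp)$, and since $\pi_0$ is the universal curve, $[M_{0,1}(X)]^\vv=\pi_0^*[M_0(X)]^\vv$. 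The projection formula then gives
\[
  (q^1)_*[\tildeMX^{1,\circ}]^\vv=\pi_{0*}\big(i^*D\cdot\pi_0^*[M_0(X)^\circ]^\vv\big)=\big(\deg_C i^*D\big)\,[M_0(X)^\circ]^\vv,
\]
where $\deg_C$ is the degree on a fibre $C$ of $\pi_0$ (a genus-zero domain of class $\beta$); so it remains to show $\deg_C i^*D=\tfrac{2+K_X\cdot\beta}{24}$.

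The last step is the local calculation, and it is the main obstacle. Here I would unwind the construction of $D$: over $\bar M_{1,1}\times M^X_{0,1}(\pp)$ the excess bundle for $[\BltildeMXP^1]^\vv$ has a summand coming from the collapsed elliptic component of the form $\E^\vee\otimes\mathrm{pr}_2^*\mathcal{W}$, where $\mathcal{W}=\ev_Q^*T_X/\mathcal{T}$ is rank $2$ and $\mathcal{T}\subset\ev_Q^*T_X$ is the tautological tangent line of the rational tail at $Q$. Pushing the virtual class forward along $\tilde{r}^1$ and using $\lambda_1^2=0$ on $\bar M_{1,1}$, the contribution of this summand is $\int_{\bar M_{1,1}}c_2(\E^\vee\otimes\mathrm{pr}_2^*\mathcal{W})=-\big(\int_{\bar M_{1,1}}\lambda_1\big)\,c_1(\mathcal{W})=-\tfrac1{24}\,c_1(\mathcal{W})$, so $D\equiv-\tfrac1{24}\,c_1(\mathcal{W})$ modulo classes pulled back from $M_0(X)$. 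Since $\det\mathcal{W}=\psi_Q\otimes\ev_Q^*\det T_X$, pulled-back classes have zero fibrewise degree, $\deg_C\psi_Q=-2$ ($\psi_Q$ restricts on a fibre to the dualizing sheaf of a genus-zero curve), and $\deg_C\ev_Q^*c_1(T_X)=c_1(T_X)\cdot\beta=-K_X\cdot\beta$, we conclude $\deg_C i^*D=-\tfrac1{24}(-2-K_X\cdot\beta)=\tfrac{2+K_X\cdot\beta}{24}$, as required. The delicate part is justifying the decomposition of the excess bundle and identifying $\mathcal{W}$ via the Hu--Li local equations for the blow-up.
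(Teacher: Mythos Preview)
Your route and the paper's diverge at the first step. The paper does \emph{not} invoke the splitting (Corollary~\ref{splitting corollary}) or Proposition~\ref{one} to analyze $[\tildeMX^{1,\circ}]^\vv$. Instead it identifies the intrinsic normal cone directly on the open locus:
\[
  C_{\tildeMX/C_{\tildeMP/\tildeV}}\cap\tildeMX^{1,\circ}\;\simeq\;C_{\tildeMX^{1,\circ}/\tildeMP^{1,\circ}}\oplus O(\xi_1),
\]
so that $[\tildeMX^{1,\circ}]^\vv=c_2(E)\cap\big([M_{1,1}(X,0)]\times_X[M_{0,1}(X,d)]^\vv\big)$ with $E=\mathrm{coker}\big(O(\xi_1)\to f^*T_X|_Q\otimes\E^\vee\big)$. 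Then Theorem~\ref{virtpushfor} is used only as a black box to assert $(q^1)_*[\tildeMX^{1,\circ}]^\vv=k\,[M_0(X)\cap q^1(\tildeMX^{1,\circ})]^\vv$, and $k$ is computed on the \emph{generic} fibre $F\simeq\pp^1\times\bar M_{1,1}$, where $E|_F\simeq N_{\pp^1/X}\oplus\E^\vee$ splits and $\int_F c_2(E|_F)=\tfrac{2+K_X\cdot\beta}{24}$. Your final fibrewise computation (using $\psi_Q|_C=\omega_C$, $\int_{\bar M_{1,1}}\lambda_1=\tfrac1{24}$) is the same number, reached by an equivalent Chern-class identity; note that since $T^E_Q\cong\E^\vee$ on $\bar M_{1,1}$, the paper's $E$ and your $\E^\vee\otimes\mathcal W$ actually coincide on the open locus.

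There are, however, two genuine soft spots in your detour. First, the restriction step: from $[\tildeMX^{1,\circ}]^\vv=[\tildeMX]^\vv|_{\tildeMX^{1,\circ}}$ and the splitting you need $[\tildeMX^\mu]^\vv|_{\tildeMX^{1,\circ}}=0$ for all $\mu\neq 1$. Disjointness from $\tildeMX^0$ is by definition, but ghost components with $k(\mu)\geq 2$ can meet $\tildeMX^1$ away from $\tildeMX^0$, and since $\vdim=0$ these are $0$-cycles whose restriction is not automatically zero. The paper avoids this entirely by never passing through the splitting. Second, you assert a product description $\BltildeMXP^1\cong\bar M_{1,1}\times M^X_{0,1}(\pp)$ and identify the excess bundle $\Cok^1$ (Remark~\ref{define Cok}) as having the summand $\E^\vee\otimes\mathcal W$. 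But $\BltildeMXP^1$ is the \emph{further} blow-up along the ordered centres $\tildeMP^0,\tildeMP^\lambda$, so neither the product structure nor the rank and shape of $\Cok^1$ are evident globally; your identification is only justified over the locus missing the earlier blow-up centres. The paper sidesteps this by computing the excess class on $\tildeMX^{1,\circ}$ (no $\Bl$) and evaluating on a single generic fibre, which is all that is needed once the virtual push-forward property supplies proportionality.
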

\begin{proof}
  Writing $\xi_1$ for the divisor on $\tildeMX$ which corresponds to smoothing the node $Q$, we see that 
  \[
    C_{\tildeMX/C_{\tildeMP/\tildeV}}\cap \tildeMX^{1,\circ}\simeq C_{\tildeMX^{1,\circ}/\tildeMP^{1,\circ}}\oplus O(\xi_1).
  \]

  At each point $(C,f)\in \tildeMX^{1,\circ}$ we have an exact sequence of vector bundles
  \[
    0\to T_C|_Q\to f^*T_X|_Q\to N_{C/X}|_Q\to 0
  \]
  where $Q$ is the node connecting the contracted genus one component to a rational curve. Let $\mathbb{E}$ be the Hodge bundle on $\bar{M}_{1,1}$. Then, $O(\xi_1)\to f^*T_X|_Q\otimes \E^{\vee}$ is an embedding of vector bundles and
\[
[\tildeMX^{1,\circ}]^{\vv}=c_2(E) \cdot [M_{1,1}(X,0)]\times_X [M_{0,1}(X,d)]^{\vv}
\] 
where $E$ is the cokernel of $O(\xi_1)\to f^*T_X|_Q\otimes \E^{\vee}$.

Theorem~\ref{virtpushfor} implies that 
\[
(q^1)_*\, [\tildeMX^{1,\circ}]^{\vv}=k \, [M_0(X)\cap q^1(\tildeMX^{1,\circ})]^{\vv}
\]
for some $k\in\Q$. We now compute $k$. By commutativity of Chern classes with restrictions we have that $k=c_2(E)\cdot [F]$, for $F$ any fibre of $q_1$. If $F$ denotes the generic fibre of $q^1$, then $F\simeq\pp^1\times \bar{M}_{1,1}$ and $E|_F = N_{\pp^1/X}\oplus \mathbb{E}^{\vee}$. Since
\[
c_2(E|_F)\cdot [F]=\frac{2+K_X\cdot\beta}{24}
\]
the result follows.
\end{proof}

\begin{remark}\label{useless} We have seen in the proof of Lemma \ref{nicelocus} that $q^{\lambda}$ restricted to $\tildeMX^{1,\circ}$ has a perfect dual obstruction theory. Even more, we have a map of relative obstruction theories 
\[ 
E_{\tildeMX/\tildefrPic}\to E_{\tilde{M}_{0,0}(X)/\tildefrPic_{0,0}}.
\] 
One could hope that we have an induced morphism of cones $$C_{\tildeMX/\tildefrPic}\to C_{\tilde{M}_{0,0}(X)/\tildefrPic_{0,0}},$$ which would give a proof of Theorem \ref{virtpushfor}. We do not know if such a morphism exists, because we do not have sufficiently explicit equations for $\tildeMX$ inside $\tildeMP$.
\end{remark}

\begin{theorem}\label{degree one} In notation as above we have 
\[
(q^1)_* \, [M_{1,0}(X,\beta)^1]^{\vv}=\frac{2+K_X\cdot\beta}{24}\,[M_{0,0}(X, \beta)]^{\vv}
\]
and $(q^{\lambda})_*[\tildeMX^{\lambda}]^{\vv}=0$ for $\lambda\neq 1$.
\end{theorem}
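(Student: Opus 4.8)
The plan is to deduce the formula from three facts already established: the virtual push forward property of Theorem~\ref{virtpushfor}, the fibrewise Chern class computation of Lemma~\ref{nicelocus}, and Proposition~\ref{lambda}.

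First I would treat the components with $\lambda \neq 1$. Since $n = 0$ we have $n_0 = 0$ for every $\lambda$, so the genus one component of every ghost component carries no marked points and $J = I$; in particular Proposition~\ref{lambda} applies to each $\lambda \in I$ with $\lambda \neq 1$. The blow-down $p_X \colon \BltildeMX^{\lambda} \to \tildeMX^{\lambda}$ is compatible with the projections to $P(X)^{\lambda}$ in the Cartesian square of Proposition~\ref{lambda} (both forget the collapsed genus one component, and $p_X$ does not modify the rational tails), so $q^{\lambda} \circ p_X = \tilde q^{\lambda}$. Hence, using $[\tildeMX^{\lambda}]^{\vv} = p_{X*}[\BltildeMX^{\lambda}]^{\vv}$ from \S\ref{sec:main theorem} and Proposition~\ref{lambda},
\[
(q^{\lambda})_*[\tildeMX^{\lambda}]^{\vv}
= (q^{\lambda})_* p_{X*}[\BltildeMX^{\lambda}]^{\vv}
= (\tilde q^{\lambda})_*[\BltildeMX^{\lambda}]^{\vv}
= 0 .
\]

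For $\lambda = 1$ the plan is as follows. Since $1 \in J$, Theorem~\ref{virtpushfor} provides a unique $k \in \Q$ with $(q^1)_*[\tildeMX^1]^{\vv} = k\,[M_0(X)]^{\vv}$; both sides are $0$-cycles, because $\vdim \tildeMX = \vdim M_0(X) = 0$ for a Calabi--Yau threefold with $n = 0$. To identify $k$ I would restrict this identity to the open substack
\[
U := M_0(X) \setminus \bigcup_{\mu \neq 1} q^1\big(\tildeMX^1 \cap \tildeMX^{\mu}\big) \subseteq M_0(X) .
\]
Over $U$ the preimage of $U$ in $\tildeMX^1$ lies in $\tildeMX^{1,\circ}$ and is disjoint from every other component of $\tildeMX$, so there the restriction of $[\tildeMX^1]^{\vv}$ coincides with that of $[\tildeMX^{1,\circ}]^{\vv} = [\tildeMX]^{\vv} \cap \tildeMX^{1,\circ}$; since moreover $U \subseteq q^1(\tildeMX^{1,\circ})$, Lemma~\ref{nicelocus} gives
\[
k\,[M_0(X)]^{\vv}\big|_U
= (q^1)_*[\tildeMX^{1,\circ}]^{\vv}\big|_U
= \frac{2 + K_X \cdot \beta}{24}\,[M_0(X)]^{\vv}\big|_U .
\]
If $[M_0(X)]^{\vv}\big|_U \neq 0$ this forces $k = \tfrac{1}{24}(2 + K_X \cdot \beta)$. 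If instead $[M_0(X)]^{\vv}\big|_U = 0$, then $[M_0(X)]^{\vv}$ is a $0$-cycle supported on the proper closed substack $M_0(X) \setminus U$, whence $[M_0(X)]^{\vv} = 0$ and both sides of the asserted identity vanish. In either case $(q^1)_*[\tildeMX^1]^{\vv} = \tfrac{1}{24}(2 + K_X \cdot \beta)\,[M_0(X)]^{\vv}$, which together with the previous paragraph proves the theorem.

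The step I expect to be the main obstacle is verifying that $U$ is dense, i.e.\ that $\tildeMX^0 \cap \tildeMX^1$ (and likewise $\tildeMX^{\mu} \cap \tildeMX^1$ for $\mu \neq 0,1$) does not dominate $M_0(X)$ under $q^1$. The model computation is over $\pp^r$: there $\tildeMP^1$ has dimension exceeding that of the main component $\tildeMP^0$ by $r - 1$, while $\tildeMP^0 \cap \tildeMP^1$ is a divisor in the smooth $\tildeMP^0$, so $\tildeMP^0 \cap \tildeMP^1$ has codimension $r$ in $\tildeMP^1$; a general fibre of the projection $\tildeMP^1 \to \bar{M}_{0,0}(\pp^r, \beta)$ is the $2$-dimensional $\pp^1 \times \bar{M}_{1,1}$, and since $r \geq 4$ for a smooth threefold $X \subset \pp^r$, such a general fibre misses $\tildeMP^0 \cap \tildeMP^1$ entirely, whence the same holds after base change along $M(X) \to M(\pp)$. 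Making this dimension bookkeeping precise over $X$ rather than just over $\pp^r$, and checking it for the deeper strata $\tildeMX^{\mu} \cap \tildeMX^1$, is where care is needed; everything else is formal manipulation of the Cartesian diagrams of \S\ref{sec:main theorem} and \S\ref{contribution} together with the restriction-to-opens property of virtual classes.
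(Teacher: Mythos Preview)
Your treatment of the components with $\lambda\neq 1$ is fine and matches the paper: since $n=0$ we have $J=I$, and pushing $[\tildeMX^\lambda]^{\vv}=p_{X*}[\BltildeMX^\lambda]^{\vv}$ through Proposition~\ref{lambda} gives zero.

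The gap is in your $\lambda=1$ argument. You propose to determine the constant $k$ by restricting the identity
\[
k\,[M_0(X)]^{\vv}=(q^1)_*[\tildeMX^1]^{\vv}
\]
to a dense open $U\subset M_0(X)$ and comparing with Lemma~\ref{nicelocus}. But for a Calabi--Yau threefold with $n=0$ both sides are \emph{zero-cycles}, and restriction of zero-cycles to an open substack is not informative: the localisation sequence
\[
A_0(M_0(X)\setminus U)\to A_0(M_0(X))\to A_0(U)\to 0
\]
has a large kernel, and a nonzero class in $A_0(M_0(X))$ can very well restrict to zero in $A_0(U)$ (think of a point class on a proper curve restricted to an affine open). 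Your fallback, ``if $[M_0(X)]^{\vv}|_U=0$ then $[M_0(X)]^{\vv}$ is supported on $M_0(X)\setminus U$ hence vanishes'', is simply false for $0$-cycles. So even after you establish that $U$ is dense (which you correctly flag as the delicate geometric input), you cannot conclude $k=\tfrac{2+K_X\cdot\beta}{24}$ this way.

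The paper avoids this by carrying out the comparison one level up, on the intermediate space $\BltildeMXP^{1}$ rather than on $\BltildeMX^{1}$. There the relevant virtual class $[\BltildeMXP^{1}]^{\vv}$ has \emph{codimension two} in an irreducible space, i.e.\ it is a cycle of positive dimension. Since the intersection $\tildeMXP\cap M(\pp)^0$ has codimension $r\ge 4$ in $\tildeMXP$, no irreducible component of the positive-dimensional class $[\BltildeMXP^{1}]^{\vv}$ can be supported there for purely dimensional reasons. One may then pick a generic smooth point of $M_{0,1}^X(\pp)$, compute over its fibre (which is the $\pp^1\times\bar M_{1,1}$ appearing in Lemma~\ref{nicelocus}), and read off the constant. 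Only after this is done does one pull back along $i$ to land on $[M_{0,1}(X)]^{\vv}$ and push down to $M_0(X)$. The moral: your dimension-count instinct is right, but it must be applied to a class of positive dimension; rerouting through $\tildeMXP^{1}$ (where the ``$X$-condition'' is imposed only on the contracted elliptic component) is exactly what makes that possible.
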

\begin{proof} 
By Proposition \ref{lambda}, we only need to compute $(q^1)_*[\tildeMXP]^{\vv}$. (Note that $I=J$ here.)  By Proposition~\ref{one}, we have
\[
(q^1)_*[\tildeMXP]^{\vv}=k [M_{0,1}^X(\pp)],
\] 
for some $k\in\Q$. Since the intersection of $\tildeMXP$ with $M(\pp)^0$ has codimension~$r$ in $\tildeMXP$ and the virtual class of $\tildeMXP$ has codimension~2 in $\tildeMXP$, there are no components of $[\tildeMXP]^{\vv}$ supported on $M(\pp)^0\cap \tildeMXP$. This shows that we can compute $k$ on points in $M_0(X)\cap q^1(\tildeMX^{1,\circ})$ in the following way. By possibly replacing $[M_0(X)]^{\vv}$ with a reduced cycle, we can choose a smooth point $j:pt\in [M_{0,0}^X(\pp)]^{\vv}$ and let $F$ be the fiber of $q^1$ over $pt$. Let us look at the following Cartesian diagram
\[\xymatrix{F\ar[r]\ar[d]_p& M(\pp)^1\ar[d]\\
pt\ar[r]&M_{0,0}(\pp)
}\]
By Remark~\ref{useless} the right vertical arrow has a perfect dual obstruction theory; this gives an induced obstruction theory on $F$ and hence a virtual class which we denote by $[F]^{\vv}$. By commutativity of virtual pull-backs with virtual push-forwards we have that $p_*[F]^{\vv}=k$. By the choice of the point $pt$ , $F$ is a closed substack of $M(\pp)^{1,\circ}$.
Applying Lemma~\ref{nicelocus}, we have that $k=\frac{2+K_X\cdot\beta}{24}$ and thus 
\[
r_*[M^X(\pp)]^{\vv}=\frac{2+K_X\cdot\beta}{24}[M_{0,1}^X(\pp)]^{\vv}.
\]
\end{proof}
 \begin{theorem} Let $X$ be a projective Calabi--Yau threefold. Then the reduced invariants and GW invariants of $X$ are related by the formula 
   \[
     \GW_{1,\beta}^X=\GW^{X,\,\red}_{1,\beta}+\frac{1}{12}\GW^{X}_{0,\beta}.
   \]
 \end{theorem}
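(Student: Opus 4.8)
The plan is to read the formula off Corollary~\ref{splitting corollary} by taking degrees and to evaluate the ghost contributions using Theorem~\ref{degree one}. Since $X$ is Calabi--Yau of dimension three and $n=0$, the spaces $M(X)$, $\tildeMX^0$ and $M_0(X)$ all have virtual dimension zero, so the three invariants in the statement are degrees of virtual classes:
\[
  \GW_{1,\beta}^X = \deg[M(X)]^{\vv}, \quad
  \GW^{X,\red}_{1,\beta} = \deg[\tildeMX^0]^{\vv}, \quad
  \GW^{X}_{0,\beta} = \deg[M_0(X)]^{\vv},
\]
the middle equality being Definition~\ref{reduced GW} with no marked-point insertions.

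First I would move from $M(X)$ to $\tildeMX$. The projection $\tildeMX \to M(X)$ is proper, and --- exactly as for $M(\pp)$ in Section~3, applying Costello's push-forward theorem to the obstruction theory $R^\bullet\pi_*f^*F$ of $\tildeMX$ relative to $\tildefrPic$ --- it carries $[\tildeMX]^{\vv}$ to $[M(X)]^{\vv}$; hence $\GW_{1,\beta}^X = \deg[\tildeMX]^{\vv}$. Both sides of Corollary~\ref{splitting corollary} now lie in $A_0$: the left side is zero-dimensional, and by construction each $[\tildeMX^{\lambda}]^{\vv}$ has the expected dimension, which here is zero. Taking degrees therefore gives
\begin{align*}
  \GW_{1,\beta}^X
  &= \deg[\tildeMX^0]^{\vv} + \sum_{\lambda \in I} \deg[\tildeMX^{\lambda}]^{\vv} \\
  &= \GW^{X,\red}_{1,\beta} + \sum_{\lambda \in I}\deg[\tildeMX^{\lambda}]^{\vv}.
\end{align*}

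It remains to compute the sum over ghost components, and here Theorem~\ref{degree one} does all the work. Because $n=0$ every ghost component has an unmarked collapsed genus-one curve, so $I = J$ and the theorem applies to all $\lambda \in I$. For $\lambda \neq 1$ it gives $(q^{\lambda})_*[\tildeMX^{\lambda}]^{\vv} = 0$; since $q^{\lambda}$ is proper (as $\tildeMX^{\lambda}$ is) and the degree of a cycle is unchanged by proper push-forward, we conclude $\deg[\tildeMX^{\lambda}]^{\vv} = 0$. For $\lambda = 1$ it gives $(q^1)_*[\tildeMX^{1}]^{\vv} = \tfrac{2+K_X\cdot\beta}{24}\,[M_0(X)]^{\vv}$, and $K_X\cdot\beta = 0$ because $X$ is Calabi--Yau, so the coefficient is $\tfrac{1}{12}$ and $\deg[\tildeMX^{1}]^{\vv} = \tfrac{1}{12}\deg[M_0(X)]^{\vv} = \tfrac{1}{12}\GW^{X}_{0,\beta}$. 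Substituting these values back yields $\GW_{1,\beta}^X = \GW^{X,\red}_{1,\beta} + \tfrac{1}{12}\GW^{X}_{0,\beta}$. In short, given Corollary~\ref{splitting corollary} and Theorem~\ref{degree one} this theorem is purely formal; the only points that warrant a sentence are the push-forward identity sending $[\tildeMX]^{\vv}$ to $[M(X)]^{\vv}$ (the analogue for $X$ of the Costello statement proved for $M(\pp)$ in Section~3) and the observation that each term of the splitting already has the expected dimension, so that passing to degrees is legitimate. I do not expect a real obstacle at this final step --- the substance of the argument lives in Sections~\ref{sec:main theorem} and~\ref{contribution}.
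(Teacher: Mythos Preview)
Your proposal is correct and follows essentially the same route as the paper: the paper's proof is the one line ``Combine Corollary~\ref{splitting corollary}, Theorem~\ref{virtpushfor}, and Theorem~\ref{degree one},'' and you have simply unpacked this, noting that $I=J$ when $n=0$, that $K_X\cdot\beta=0$ makes the coefficient $\tfrac{1}{12}$, and that the Costello-type push-forward identity $[\tildeMX]^{\vv}\mapsto[M(X)]^{\vv}$ (stated in the paper only for $M(\pp)$) is needed to pass from $\deg[\tildeMX]^{\vv}$ to $\GW_{1,\beta}^X$. The paper's citation of Theorem~\ref{virtpushfor} is already subsumed in your use of Theorem~\ref{degree one}, so nothing is missing.
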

 \begin{proof} Combine Corollary~\ref{splitting corollary}, Theorem~\ref{virtpushfor}, and Theorem~\ref{degree one}.
 \end{proof}

\section*{Acknowledgements} 

We thank Francesca Carocci, Huai-Liang Chang and Barbara Fantechi for helpful discussions. We thank Aleksey Zinger for many useful comments on the manuscript.  We are especially grateful to Luca Battistella, both for discussions and for pointing out a serious mistake in an earlier draft.  

T.C.~is supported by ERC Consolidator Grant 682603. C.M.~is supported by an EPSRC-funded Royal Society Dorothy Hodgkin Fellowship. Much of this paper was written while C.M.~was in residence at the Mathematical Sciences Research Institute in Berkeley, California during the Spring 2018 semester,  supported by the National Science Foundation under Grant No. DMS-1440140.

\bibliographystyle{plain}
\bibliography{gen1draft_bibliography}

\end{document}